\numberwithin{equation}{section}
\numberwithin{figure}{section}
\theoremstyle{plain}
\newtheorem{thm}{\protect\theoremname}[section]
  \theoremstyle{remark}
  \newtheorem{rem}[thm]{\protect\remarkname}
  \theoremstyle{plain}
  \newtheorem{prop}[thm]{\protect\propositionname}
  \theoremstyle{definition}
  \newtheorem{defn}[thm]{\protect\definitionname}
  \theoremstyle{plain}
  \newtheorem{cor}[thm]{\protect\corollaryname}
  \theoremstyle{plain}
  \newtheorem{lem}[thm]{\protect\lemmaname}
  \providecommand{\corollaryname}{Corollary}
  \providecommand{\definitionname}{Definition}
  \providecommand{\lemmaname}{Lemma}
  \providecommand{\propositionname}{Proposition}
  \providecommand{\remarkname}{Remark}
\providecommand{\theoremname}{Theorem}
\begin{document}
\global\long\def\bbN{\mathbb{N}}

\global\long\def\bbQ{\mathbb{Q}}

\global\long\def\bbR{\mathbb{R}}

\global\long\def\bbZ{\mathbb{Z}}

\global\long\def\bbC{\mathbb{C}}

\global\long\def\eset{\emptyset}

\global\long\def\nto{\nrightarrow}

\global\long\def\re{\mathrm{Re\,}}

\global\long\def\im{\mathrm{Im\,}}

\global\long\def\limti{{\displaystyle \lim_{n\to\infty}}}

\global\long\def\sumnti{{\displaystyle \sum_{n=1}^{\infty}}}

\global\long\def\sumktn{{\displaystyle \sum_{k=1}^{n}}}

\global\long\def\E{{\bf E}}

\global\long\def\ind{{\bf 1}}

\global\long\def\O{O}

\global\long\def\Leb{}

\title[A temporal CLT for cocycles over rotations]{A temporal Central Limit Theorem for real-valued cocycles over rotations}

\author{Michael Bromberg and Corinna Ulcigrai}
\begin{abstract}
We consider deterministic random walks on the real line driven by
irrational rotations, or equivalently, skew product extensions of
a rotation by $\alpha$ where the skewing cocycle is a piecewise constant
mean zero function with a jump by one at a point $\beta$. When $\alpha$
is badly approximable and\textbf{ $\beta$} is badly approximable
with respect to $\alpha$, we prove a \emph{Temporal Central Limit
theorem }(in the terminology recently introduced by D.Dolgopyat and
O.Sarig), namely we show that for any fixed initial point, the \emph{occupancy
random variables}, suitably rescaled, converge to a Gaussian random
variable. This result generalizes and extends a theorem by J. Beck
for the special case when $\alpha$ is quadratic irrational, $\beta$
is rational and the initial point is the origin, recently reproved
and then generalized to cover any initial point using geometric renormalization
arguments by Avila-Dolgopyat-Duryev-Sarig (Israel J., 2015) and Dolgopyat-Sarig
(J. Stat. Physics, 2016). We also use renormalization, but in order
to treat irrational values of $\beta$, instead of geometric arguments,
we use the renormalization associated to the continued fraction algorithm
and dynamical Ostrowski expansions. This yields a suitable symbolic
coding framework which allows us to reduce the main result to a CLT
for non homogeneous Markov chains.
\end{abstract}

\maketitle

\section{introduction and results}

The main result of this article is a temporal distributional limit
theorem (see Section \ref{subsec:Temporal-and-Spacial} below) for
certain functions over an irrational rotation (Theorem \ref{thm: Main thm}
below). In order to introduce and motivate this result, in the first
section, we first define two types of distributional limit theorems
in the study of dynamical systems, namely spatial and temporal. Temporal
limit theorems in dynamics are the focus of the recent paper \cite{dolgopyat2016temporal}
by D. Dolgopyat and O. Sarig; we refer the interested reader to \cite{dolgopyat2016temporal}
and the references therein for a comprehensive introduction to the
subject, as well as for a list of examples of dynamical systems known
up to date to satisfy temporal distributional limit theorems. In section
\ref{subsec:Beck's-temporal-CLT} we then focus on irrational rotations,
which are one of the most basic examples of low complexity dynamical
systems, and recall previous results on temporal limit theorems for
rotations, in particular Beck's temporal CLT. Our main result in stated
in section \ref{subsec:Main-result-and}, followed by a description
of the structure of the rest of the paper in section \ref{subsec:Proof-tools-and}.

\subsection{Temporal and Spatial Limits in dynamics. \label{subsec:Temporal-and-Spacial}}

Distributional limit theorems appear often in the study of dynamical
systems as follows. Let $X$ be a complete separable metric space,
$m$ a Borel probability measure on $X$ and denote by $\mathcal{B}$
is the Borel $\sigma$-algebra on $X$. Let $T:\,X\rightarrow X$
be a Borel measurable map. We call the quadruple $\left(X,\mathcal{B},m,T\right)$
a \textit{probability preserving dynamical system} and assume that
$T$ is ergodic with respect to $m$. Let $f:\,X\rightarrow\bbR$
be a Borel measurable function and set 
\[
S_{n}\left(T,f,x\right):=\sum_{k=0}^{n-1}f\circ T^{k}\left(x\right)
\]

We will also use the notation $S_{n}\left(x\right)$, or $S_{n}\left(f,x\right)$
instead of $S_{n}\left(T,f,x\right)$, when it is clear from the context,
what is the underlying transformation or function. The function $S_{n}(x)$
is called (the $n^{th}$) \emph{Birkhoff sum} (or also ergodic sum)
of the function $f$ over the transformation $T$. The study of Birkhoff
sums, their growth and their behavior is one of the central themes
in ergodic theory\textcolor{black}{. When the transformation $T$
is }\textcolor{black}{\emph{ergodic}}\textcolor{black}{{} with respect
to $m,$ by the }\textcolor{black}{\emph{Birkhoff ergodic theorem}}\textcolor{black}{,
for any $f\in L^{1}(X,m)$, for $m$-almost every $x\in X$, $S_{n}(f,x)/n$
converges to $\int fdm$ as $n$ grows; equivalently, one can say
that the random variables $X_{n}:=f\circ T^{n}$ where $x$ is chosen
randomly according to the measure $m$, }\textcolor{black}{\emph{satisfy
the strong law of large numbers.}}\textcolor{black}{{} We will now introduce
some limit theorems which allow to study the error term in the Birkhoff
ergodic theorem.}

The function $f$ is said to satisfy a \textit{spatial distributional
limit theorem} (\emph{spatial} DLT) if there exists a random variable
with no atoms $Y$, and sequences of constants $A_{n},B_{n}\in\bbR$,
$B_{n}\rightarrow\infty$, such that the random variables $\frac{S_{n}\left(x\right)-A_{n}}{B_{n}}$,
where $x$ is chosen randomly according to the measure $m$, converge
in distribution to $Y$. In this case we write 
\[
\frac{S_{n}-A_{n}}{B_{n}}\overset{dist}{\longrightarrow}Y.
\]
It is the case that many \emph{hyperbolic }dynamical systems, under
some regularity conditions on $f$, satisfy a spatial DLT with the
limit being a Gaussian random variable. In the cases that we have
in mind, the rate of mixing of the sequence of random variables $X_{n}:=f\circ T^{n}$
is sufficiently fast, in order for them to satisfy the Central Limit
Theorem (CLT). On the other hand, in many classical examples of dynamical
systems with \emph{zero entropy,} for which the random variables $X_{n}:=f\circ T^{n}$
are highly correlated, the spatial DLT fails if $f$ is sufficiently
regular. For example, this is the case when $T$ is an irrational
rotation and $f$ is of bounded variation. 

Perhaps surprisingly, many examples of dynamical systems with zero
entropy satisfy a CLT when instead of averaging over the space $X$,
one considers the Birkhoff sums $S_{n}\left(x_{0}\right)$ over a
\emph{single orbit} of some fixed initial condition $x_{0}\in X$.
Fix an initial point $x_{0}\in X$ and consider its orbit under $T$.
One can define a sequence of \emph{occupation measures} on $\bbR$
by 
\[
\nu_{n}\left(F\right):=\frac{1}{n}\#\left\{ 1\leq k\leq n:\quad S_{k}\left(x_{0}\right)\in F\right\} 
\]
for every Borel measurable $F\subset\bbR$. One can interpret the
quantity $\nu_{n}\left(F\right)$ as the fraction of time that the
Birkhoff sums $S_{k}\left(x_{0}\right)$ spend in the set $F$, up
to time $n$. Let $Y_{n}$ be a sequence of random variables distributed
according to $\nu_{n}$. We say that the pair $(T,f)$ satisfies a
\emph{temporal distributional limit theorem} (\emph{temporal} DLT)
along the orbit of $x_{0}$, if there exists a random variable with
no atoms $Y$, and two sequences $A_{n}\in\bbR$ and $B_{n}\rightarrow\infty$
such that $(Y_{n}-A_{n})/B_{n}$ converges in distribution to $Y$.
In other words, the pair $\left(T,f\right)$ satisfies a temporal
DLT along the orbit of $x_{0}$, if 
\[
\frac{1}{n}\#\left\{ 1\leq k\leq n:\,\frac{S_{k}\left(x_{0}\right)-A_{n}}{B_{n}}<a\right\} \overset{n\rightarrow\infty}{\longrightarrow}Prob\left(Y<a\right)
\]
for every $a\in\bbR$. If the limit $Y$ is a Gaussian random variable,
we call this type of behavior a \emph{temporal CLT} along the orbit
of $x_{0}$. Note, that this type of result may be interpreted as
convergence in distribution of a sequence of normalized random variables,
obtained by considering the Birkhoff sums $S_{k}\left(x_{0}\right)$
for $k=1,..,n$ and choosing $k$ randomly uniformly. 

\subsection{Beck's temporal CLT and its generalizations\label{subsec:Beck's-temporal-CLT}}

One example of occurrence of a temporal CLT in dynamical systems with
zero entropy is the following result by Beck, generalizations of which
are the main topic of this paper. Let us denote by $R_{\alpha}$ the
rotation on the interval $\mathbb{T}=\bbR\setminus\bbZ$ by an irrational
number $\alpha\in\mathbb{R}$, given by 
\[
R_{\alpha}(x)=x+\alpha\mod1.
\]
Let $f_{\beta}:\,\mathbb{T}\to\mathbb{R}$ be the indicator of the
interval $[0,\beta)$ where $0<\beta<1$, rescaled to have mean zero
with respect to the Lebesgue measure on $\mathbb{T}$, namely 
\[
f_{\beta}(x)=\ind_{\left[0,\beta\right)}\left(x\right)-\beta.
\]
The sequence $\left\{ S_{n}\right\} $ of random variables given by
the Birkhoff sums $S_{n}(x)=S_{n}\left(R_{\alpha},f_{\beta},x\right)$
, where $x$ is taken uniformly with respect to the Lebesgue measure,
is sometimes referred to in the literature as the \emph{deterministic
random walk }driven by an irrational rotation (see for example \cite{avila2015visits}).

Beck proved \cite{beck2010randomness,beck2011randomness} that if
$\alpha$ is a quadratic irrational, \textcolor{black}{and} $\beta$
is rational, then the pair $(R_{\alpha},f_{\beta})$ satisfies a temporal
DLT along the orbit of $x_{0}=0$. More precisely, he shows that there
exist constants $C_{1}$ and $C_{2}$ such that for all $a,b\in\bbR$,
$a<b$ \textcolor{black}{
\[
\frac{1}{n}\#\left\{ 1\leq k\leq n:\,\frac{S_{k}\left(R_{\alpha},f_{\beta},0\right)-C_{1}\log n}{C_{2}\sqrt{\log n}}\in\left[a,b\right]\right\} \rightarrow\frac{1}{\sqrt{2\pi}}\int\limits _{a}^{b}e^{-\frac{x^{2}}{2}}dx.
\]
}

\textcolor{black}{Beck's CLT relates to the theory of discrepancy
in number theory as follows. If $\alpha\in\mathbb{R}$ is irrational,
by unique ergodicity of the rotation $R_{\alpha}$, the sequence of
$\left\{ j\alpha\right\} $ is }\textcolor{black}{\emph{equidistributed
modulo one}}\textcolor{black}{, i.e. in particular, for any}\textbf{\textcolor{black}{{}
$\beta\in\left[0,1\right]$}}\textcolor{black}{{} if we set 
\[
N_{k}(\alpha,\beta):=\#\left\{ 0\leq j<k\:|\quad0\leq j\alpha\mod1<\beta\right\} ,
\]
then $N_{k}\left(\alpha,\beta\right)/k$ converges to $\beta$, or,
equivalently, $N_{k}\left(\alpha,\beta\right)=k\beta+o(k)$. Discrepancy
theory concerns the study of the error term in the expression $N_{k}\left(\alpha,\beta\right)=k\beta+o(k)$.
Beck's result hence says that, when $\alpha$ is a }\textcolor{black}{\emph{quadratic
irrational}}\textcolor{black}{{} and $\beta$ is}\textcolor{black}{\emph{
rational, }}\textcolor{black}{the error term $\overline{N_{k}}\left(\alpha,\beta\right):=N_{k}\left(\alpha,\beta\right)-k\beta$
, when $k$ is chosen uniformly in $\left\{ 1,\dots,n\right\} $,
can be normalized so that it converges to the standard Gaussian distribution
as $n$ grows to infinity.}

\textcolor{black}{Let us also remark that the Birkhoff sums in the
statement of Beck's theorem are related to the dynamics of the map
$T_{f_{\beta}}:\mathbb{T}\times\mathbb{R}\rightarrow\mathbb{T\times R}$,
defined by 
\[
T_{f_{\beta}}\left(x,y\right)=\left(R_{\alpha}\left(x\right),y+f_{\beta}\left(x\right)\right),\qquad\text{\ensuremath{\left(x,y\right)\in T\times\mathbb{R},}}
\]
since one can see that the form of the iterates of $T_{f}$ is $T_{f}^{n}\left(x,y\right)=\text{\ensuremath{\left(R_{\alpha}^{n}\left(x\right),y+S_{n}\text{\ensuremath{\left(f_{\beta},x\right)}}\right)}.}$
This skew product map has been studied as one basic example in infinite
ergodic theory and there is a long history of results on it, starting
from ergodicity (see for example \cite{schmidt1978cylinder,conze1976ergodicite,aaronson1982visitors,oren1983ergodicity,avila2015visits,aaronson2016discrepancy}). }

\textcolor{black}{Recently, in \cite{avila2015visits}, a new proof
of Beck's theorem for the special case where $\beta=\frac{1}{2}$,
which uses dynamical and geometrical renormalization tools. It is
crucially based on the interpretation of the corresponding skew-product
map $T_{f_{1/2}}$ as the Poincaré map of a flow on the }\textcolor{black}{\emph{staircase}}\textcolor{black}{{}
periodic surface, which was noticed and pointed out in \cite{hooper2013dynamics}.}
In \cite{dolgopyat2016temporal} this method is generalized to show
that for any initial point $x$, any $\alpha$ quadratic irrational
and any\emph{ rational} $\beta$, there exists a sequence $A_{n}:=A_{n}\left(\alpha,\beta,x\right)$
and a constant $B:=B\left(\alpha,\beta\right)$ such that 
\[
\frac{1}{n}\#\left\{ 1\leq k\leq n:\,\frac{S_{k}\left(R_{\alpha},f_{\beta},x\right)-A_{n}}{B\sqrt{\log n}}\in\left[a,b\right]\right\} \rightarrow\frac{1}{\sqrt{2\pi}}\int\limits _{a}^{b}e^{-\frac{x^{2}}{2}}dx
\]
for all $a,b\in\bbR$, $a<b$.  Dolgopyat and Sarig showed us how to use the staircase method to prove the temporal CLT also in the case when $\alpha$ \emph{is badly
approximable}, for a.e. $x$ and $\beta=1/2$  (private communication), but their methods do not apply  to the more general class of $\beta$s that we treat in this paper. They also informed us that they can show that the temporal CLT does not hold for a.e. $(\alpha,x)$ and $\beta=1/2$.


\subsection{Main result and comments\label{subsec:Main-result-and}}

The main result of this paper is the following generalization of Beck's
temporal CLT, in which we consider certain irrational values of $\beta$
and badly approximable values of $\alpha$. Let us recall that $\alpha$
is\emph{ badly approximable} (or equivalently, $\alpha$ is of\emph{
bounded type}) if there exists a constant $c>0$ such that $\left|\alpha-p/q\right|\geq c/\left|q\right|$
for any $p,q$ , $q\neq0$. Equivalently, $\alpha$ is \emph{badly
approximable} if the continued faction entries of $\alpha$ are uniformly
bounded. For $\alpha\in\left(0,1\right)\setminus\bbQ$ let us say
that $\beta$ is \emph{badly approximable with respect to} $\alpha$
if there exists a constant $C>0$ such that

\begin{equation}
\left|q\alpha-\beta-p\right|>\frac{C}{\left|q\right|}\qquad\mathrm{for\,all\,p\in\mathbb{Z},\,q\in\mathbb{\bbZ}\setminus\left\{ 0\right\} .}\label{eq:BAbeta}
\end{equation}
One can show that given a badly approximable $\alpha$, the set of
$\beta$ which are badly approximable with respect to $\alpha$ have
full Hausdorff dimension. 
\begin{thm}
\label{thm: Main thm}Let $0<\alpha<1$ be a badly approximable irrational
number. For every $\beta$ badly approximable with respect to $\alpha$
and every $x\in\mathbb{T}$ there exists a sequence of centralizing
constants $A_{n}:=A_{n}\left(\alpha,\beta,x\right)$ and a sequence
of normalizing constants $B_{n}:=B_{n}\left(\alpha,\beta\right)$
such that for all $a<b$ 
\[
\frac{1}{n}\#\left\{ 1\leq k\leq n:\,\frac{S_{k}\left(R_{\alpha},f_{\beta},x\right)-A_{n}}{B_{n}}\in\left[a,b\right]\right\} \rightarrow\frac{1}{\sqrt{2\pi}}\int\limits _{a}^{b}e^{-\frac{x^{2}}{2}}dx.
\]
\end{thm}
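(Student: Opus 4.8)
The plan is to prove the temporal CLT by renormalization via the continued fraction algorithm, reducing the statement to a Central Limit Theorem for a non-homogeneous Markov chain. First I would set up the symbolic coding: using the Ostrowski expansion associated to $\alpha$, every integer $k$ with $1\le k\le n$ has a representation in terms of the denominators $q_j$ of the convergents of $\alpha$, and the Birkhoff sum $S_k(R_\alpha,f_\beta,x)$ can be decomposed accordingly. The key point is a cocycle identity: the Birkhoff sums of $f_\beta$ over the special orbit segments of length $q_j$ are, up to bounded error, governed by a renormalized sequence of piecewise constant cocycles, and since $\alpha$ is badly approximable the continued fraction entries are bounded, so the renormalization dynamics takes place in a compact set. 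This is where the hypothesis that $\beta$ is badly approximable with respect to $\alpha$ enters: condition \eqref{eq:BAbeta} guarantees that the ``position'' of the jump point $\beta$ relative to the renormalized partition stays uniformly away from the endpoints, so the renormalized cocycles do not degenerate and the associated increments have variances bounded away from $0$ and $\infty$.

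Next I would make precise the probabilistic reformulation. Choosing $k$ uniformly in $\{1,\dots,n\}$ and writing $k$ in its (dynamical) Ostrowski expansion produces a sequence of digits $a_j(k)$; as $n\to\infty$ along a suitable scale $N$ with $q_N \le n < q_{N+1}$, these digits become, asymptotically, independent-like coordinates, and $S_k(x)$ becomes a sum of the form $\sum_{j=1}^{N} \Psi_j(a_j(k), \text{state}_j)$ where $\Psi_j$ are the renormalized cocycle contributions and $\text{state}_j$ records the relevant position information (essentially the orbit of $0$ and of $\beta$ under the renormalized rotations). One checks that $(\text{state}_j)_j$ evolves as a non-homogeneous Markov chain on a compact state space, driven by the continued fraction data of $\alpha$ and $\beta$, and that $S_k(x)$ is an additive functional of this chain. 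The number of nonzero contributing digits is of order $\log n$ (since $q_N \asymp C^N$ by bounded type), which explains the $\sqrt{\log n}$ normalization in Beck's theorem and, more generally, the growth of $B_n$ here.

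Then I would invoke a CLT for additive functionals of non-homogeneous Markov chains — of Dobrushin/Sethuraman–Varadhan type, or a martingale-approximation argument — to conclude that $(S_k(x) - A_n)/B_n$ converges in distribution to a standard Gaussian when $k$ is uniform in $\{1,\dots,n\}$, with $A_n$ the expectation and $B_n^2$ the variance of the additive functional up to level $N(n)$. Verifying the hypotheses of such a CLT — uniform ellipticity/mixing of the chain, a Lindeberg-type bound on the increments, and crucially a lower bound $B_n^2 \ge c\log n$ ruling out degeneracy — is the heart of the argument. The main obstacle I expect is precisely this non-degeneracy of the variance: one must show that the renormalized cocycle increments do not systematically cancel, which requires controlling the interaction between the position of $\beta$ and the renormalized dynamics uniformly in $j$, and it is exactly here that both badly approximable conditions are used in an essential way. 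A secondary technical difficulty is handling the initial point $x$: arbitrary $x$ contributes its own Ostrowski-type expansion which must be shown to affect only the centering $A_n=A_n(\alpha,\beta,x)$ and not the limiting Gaussian shape, so that $B_n=B_n(\alpha,\beta)$ is independent of $x$.
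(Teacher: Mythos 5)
Your overall strategy coincides with the paper's: renormalize by the continued fraction/Ostrowski algorithm, code the time parameter $k$ adically so that choosing $k$ uniformly in $\{1,\dots,n\}$ becomes a Markov measure on roughly $\log n$ coordinates, express $S_k$ as an additive functional of a non-homogeneous Markov chain (Proposition \ref{prop: Connection between temporal sums and MC}), and apply a CLT for such chains. However, the two steps you single out as the heart of the argument are resolved in the paper by different means than you anticipate. First, the hypothesis that $\beta$ is badly approximable with respect to $\alpha$ is not used to keep the jump point uniformly away from partition endpoints nor to bound the variances of individual increments from below (these can vanish); it is used (Lemma \ref{lem: badly approximable implies Ostrowski bounded} and Proposition \ref{prop: Grouping Prop}) to show that the itinerary of $(\alpha_0,\beta_0)$ under the renormalization map has uniformly bounded excursions in the ``bad'' states where $b_n=0$, which forces products of boundedly many incidence matrices to be strictly positive; this yields contraction coefficients uniformly less than one for bounded products of transition matrices and hence $\varphi$-mixing of the chain. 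Relatedly, Dobrushin's theorem is not directly applicable in this setting (single transition matrices need not contract), and the paper instead uses Utev's CLT for $\varphi$-mixing triangular arrays (Theorem \ref{thm: CLT for phi-mixing}). Second, no quantitative lower bound of the form $B_n^2\ge c\log n$ is proved or needed: since the increments $\xi_k$ are uniformly bounded (Denjoy--Koksma together with bounded type of $\alpha$), the Lindeberg condition reduces to $\sigma_n\to\infty$, and this is obtained softly --- if $\sigma_{n_j}$ stayed bounded along a subsequence the Birkhoff sums would be tight, hence $\varphi$ a coboundary, contradicting the infiniteness of the Ostrowski expansion of $\beta_0$ (Corollary \ref{cor: var goest to infty}). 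Your proposed route of proving non-cancellation of the renormalized increments directly would be substantially harder, and you should be aware that the soft argument is available. Finally, the dependence on the initial point $x$ is handled essentially as you suggest: the orbit segment is cut into complete towers of a level $K(n)$ comparable to $N(n)$, the incomplete top and bottom blocks contribute a negligible fraction of times, and the position of $x$ inside its tower contributes only a uniformly bounded shift absorbed into $A_n(\alpha,\beta,x)$, leaving $B_n$ independent of $x$.
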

In other words, for every $\alpha$ badly approximable, any $\beta$
badly approximable with respect to $\alpha$ the pair $\left(R_{\alpha},f\right)$
satisfies the temporal CLT along the orbit of any $x\in\mathbb{T}$.
Note that the centralizing constants depend on $x$, while the normalizing
constants do not. We will see in Section \ref{subsec:Positivity-of-products}
that badly approximable numbers with respect to $\alpha$ can be explicitly
described in terms of their \emph{Ostrowski expansion}, using an adaptation
of the continued fraction algorithm in the context of non homogenous
Diophantine Approximation. Let us recall that quadratic irrationals
are in particular badly approximable. Moreover, when\emph{ $\alpha$}
is badly approximable, it follows from definition that any \emph{rational}
number $\beta$ is badly approximable with respect to $\alpha$. Thus,
this theorem, already in the special case in which $\alpha$ is assumed
to be a quadratic irrational, since it includes \emph{irrational}
values of $\beta$, gives a strict generalization of the results mentioned
above. As we already pointed out, the temporal limit theorem, fails
to hold for almost every value of $\alpha$. It would be interesting to see whether a temporal
CLT holds for a larger class of values of $\beta$. 

While the proof of Theorem \ref{thm: Main thm} was inspired and motivated
by an insight of Dolgopyat and Sarig and based, as theirs, on renormalization,
we stress that our renormalization scheme and the formalism that we
develop is different. As remarked in the previous section, the proof
of Beck's theorem in \cite{avila2015visits,dolgopyat2016temporal}
exploits a geometric renormalization which is based on the link with
the staircase flow and the existence of affine diffeomorphisms which
renormalize certain directions of directional flows on this surface.
This geometrical insight, unfortunately, as well as the interpretation
of the map $T_{f}$ as the Poincaré map of a staricase flow, breaks
down when $\beta$ is not rational. Our proof does not rely on this
geometric picture, but uses only the more classical renormalization
given by the continued fraction algorithm for rotations, with the
additional information encoded by Ostrowski expansions in the context
of non homogeneous Diophantine approximations (see Section \ref{sec:RENORMALIZATION}).
This renormalization allows to encode the dynamics symbolically and
reduce it to the formalism of adic and Vershik maps \cite{vershik1997adic}). 

There is a large literature of results on limiting distributions for
entropy zero dynamical systems, see for example \cite{BufetovLimitThmTranslationflows14,BufetovSolomyak13,BufetovForni11,DolgoFayadLimit_Toral_Trans_15,MR3277200,SinaiUlcigrai08}.
Let us mention two recent results in the context of substitution systems
which are related to our work. Bressaud, Bufetov and Hubert proved
in \cite{bressaud2014deviation} a spatial CLT for substitutions with
eigenvalues of modulus one along a subsequence of times. In the same
context (substitutions with eigenvalues of modulus one), Paquette
and Son \cite{paquette2015birkhoff} recently also proved a \emph{temporal
}CLT. In \cite{aaronson2017rational} a temporal CLT over quadratic
irrational rotations and $\bbR^{d}$ valued, piecewise constant functions
with rational discontinuities, is shown to hold along subsequences.

While we wrote this paper specifically for deterministic random walks
driven by rotations, there are other entropy zero dynamical systems
where this formalism applies and for which one can prove temporal
limit theorems using similar techniques. For example, in work in progress,
we can prove temporal limit theorems also for certain linear flows
on infinite translation surfaces and some cocycles over interval exchange
transformation and more in general for certain $\mathcal{S}-$\emph{adic}
systems (which are non-stationary generalizations of substitution
systems, see \cite{berthe2013beyond}). 

\subsection{Proof tools and sketch and outline of the paper\label{subsec:Proof-tools-and}}

In Section \ref{sec:RENORMALIZATION} we introduce the renormalization
algorithm that we use, as a key tool in the proofs: this is essentially
the classical multiplicative continued fraction algorithm, with additional
data which records the relative position of the break point $\beta$
of the function $f_{\beta}$ under renormalization. This renormalization
acts on the underlying parameter space to be defined in what follows,
as a (skew-product) extension of the Gauss map, and it produces simultaneously
the continued fraction expansion entries of $\alpha$ and the Ostrowski
expansion entries of $\beta$. Variations on this skew product have
been studied by several authors (see in particular \cite{arnoux2001scenery,shunji1986some})
and it is well known that it is related to a section of the diagonal
flow on the space of affine lattices (as explained in detail in \cite{arnoux2001scenery}).
In sections \ref{subsec: Symbolic Coding} and \ref{subsec: Markov chain towers}
we explain how the renormalization algorithm provides a way of encoding
dynamics symbolically in terms of a Markov chain. More precisely,
the dynamics of the map $R_{\alpha}$ we are interested in translates
in symbolic language to the adic or Vershik dynamics (on a Bratelli
diagram given by the Markov chain), as explained in section. The original
function $f_{\beta}$ defines under renormalization a sequence of
induced functions (which correspond to Birkhoff sums of the function
$f_{\beta}$ at first return times, called special Birkhoff sums in
the terminology introduced by \cite{marmi2005cohomological}). The
Birkhoff sums of the function $f_{\beta}$ can be then decomposed
into sums of special Birkhoff sums. This formalism and the symbolic
coding allows to translate the study of the temporal visit distribution
random variable to the study of a non-homogeneous Markov chain, see
section \ref{subsec: Markov chain-1}. In Section \ref{sec:The-CLT-for-Markov chains}
we provide sufficient conditions for a non-homogeneous Markov chain
to satisfy the CLT. Finally, in Section \ref{sec:Proof-of-the TCLT}
we prove that these conditions are satisfied for the Markov chain
modeling the temporal distribution random variables.

\section{renormalization\label{sec:RENORMALIZATION}}

\subsection{Preliminaries on continued fraction expansions and circle rotations}

Let $\mathcal{G}$:$(0,1)\to(0,1)$ be the Gauss map, given by $\mathcal{G}(x)=\left\{ \nicefrac{1}{x}\right\} $,
where $\{\cdot\}$ denotes the fractional part. Recall that a regular
continued fraction expansion of $\alpha\in\left(0,1\right)\setminus\bbQ$
is given by 
\[
\alpha=\cfrac{1}{a_{0}+\cfrac{1}{a_{1}+\dots}}
\]
where $a_{i}:=a\left(\alpha_{i}\right)=\left[\frac{1}{\alpha_{i}}\right]$
and $\alpha_{i}:=G^{i}(\alpha)$=$\left\{ \frac{1}{\alpha_{i-1}}\right\} $.
In this case we write $\alpha=\left[a_{0},a_{1},...\right]$ . Setting
$q_{-1}=1$, $q_{0}=a_{0}$, $q_{n}=a_{n}q_{n-1}+q_{n-2}$ for $n\geq1$,
and $p_{-1}=0$, $p_{0}=1$, $p_{n}=a_{n}p_{n-1}+p_{n-2}$ for $n\geq1$
we have $\gcd\left(p_{n},q_{n}\right)=1$ and 
\[
\cfrac{1}{a_{0}+\cfrac{1}{a_{1}+\dots\cfrac{1}{a_{n}}}}=\frac{p_{n}}{q_{n}}.
\]

Let $\alpha\in\left(0,1\right)\setminus\bbQ$, $\mathbb{T}:=\bbR/\bbZ$
and $R_{\alpha}:\mathbb{T}\rightarrow\mathbb{T}$ be the irrational
rotation given by$R_{\alpha}:=x+\alpha\mod1$. Then the Denjoy-Koksma
inequality \cite{herman1979conjugaison,katznelson1977sigma} states
that if $f:\mathbb{T}\rightarrow\bbR$ is a function of bounded variation,
then for any $n\in\bbN$, 
\begin{equation}
\sup\left\{ \left|f\circ R_{\alpha}^{q_{n}}\left(x\right)\right|:\ x\in\mathbb{T}\right\} \leq\bigvee\nolimits _{\mathbb{T}}f\label{eq: DK inequality}
\end{equation}
where $\bigvee_{\mathbb{T}}f$ is the variation of $f$ on $\mathbb{T}$. 

In this section we define the dynamical renormalization algorithm
we use in this paper, which is an extension of the classical continued
fraction algorithm and hence of the Gauss map. This algorithm gives
a dynamical interpretation of the notion of \emph{Ostrowski expansion}
of $\beta$ relative to $\alpha$ in non-homogeneous Diophantine approximation.
We mostly follow the conventions of the paper \cite{arnoux2001scenery}
by Arnoux and Fisher, in which the connection between this renormalization
and homogeneous dynamics (in particular the geodesic flow on the space
of lattices with a marked point, which is also known as \emph{the
scenery flow)} is highlighted. As in \cite{arnoux2001scenery} we
use a different convention for rotations on the circle. Let $\alpha\in\left(0,1\right)\setminus\bbQ$,
$I=\left[-1,\alpha\right)$ and let $T_{\alpha}\left(x\right):\left[-1,\alpha\right)\rightarrow\left[-1,\alpha\right)$
be defined by 
\begin{equation}
T_{\alpha}\left(x\right)=\begin{cases}
x+\alpha & x\in\left[-1,0\right)\\
x-1 & x\in\left[0,\alpha\right)
\end{cases}\label{eq: Irr. Rotaion}
\end{equation}
Note that $T_{\alpha}$ may also be viewed as a rotation on the circle
$\bbR/\sim$ where the equivalence relation $\sim$ on $\bbR$ is
given by $x\sim y\iff x-y\in\left(1+\alpha\right)\bbZ$. It is conjugate
to the standard rotation $R_{\alpha'}$ on $\mathbb{T}$ , where $\alpha'=\frac{\alpha}{1+\alpha}$,
by the map $\psi(x)=\left(\alpha'+1\right)x-1$ which maps the unit
interval $[0,1]$ to the interval $[-1,\alpha]$.
\begin{rem}
\label{rem: Circle maps}In what follows, we slightly abuse notation
by not distinguishing between the transformation $T_{\alpha}$ and
the transformation defined similarly on the interval $\left(-1,\alpha\right]$
by 
\[
T'_{\alpha}\left(x\right)=\begin{cases}
x+\alpha & x\in\left(-1,0\right];\\
x-1 & x\in\left[0,\alpha\right);
\end{cases}
\]
when viewed as transformations on the circle, $T_{\alpha}$ and $T_{\alpha'}$
coincide. 
\end{rem}
Note that given an irrational rotation $R_{\alpha}$, we can assume
without loss of generality that $\alpha<\frac{1}{2}$ (otherwise consider
the inverse rotation by $1-\alpha$). If we set 
\begin{equation}
\alpha_{0}:=\frac{\alpha}{1-\alpha}\label{eq:Correcpondence between alpha and alpha_zero}
\end{equation}
 then $\mathcal{G}(\alpha)=\mathcal{G}^{i}(\alpha_{0})$ for any $i\in\mathbb{N}$
and thus, apart from the first entry, the continued fraction entries
of $\alpha$and $\alpha_{0}$ coincide. If $a_{0}$, $a_{0}'$ are
correspondingly the first entries in the expansion of $\alpha_{0}$
and $\alpha$, then $a_{0}=a_{0}'+1$. Furthermore, given $\beta\in\left(0,1\right)$,
let 
\begin{equation}
\beta_{0}:=\left(\alpha+1\right)\beta-1.\label{eq: Correcspondence between beta and beta_zero}
\end{equation}
Then the mean zero with a discontinuity at $\beta_{0}$, given by

\begin{equation}
\varphi\left(x\right)=\ind_{\left[-1,\beta_{0}\right)}\left(x\right)-\frac{\beta_{0}+1}{\alpha_{0}+1}\label{eq: Cocycle function}
\end{equation}
is the function that corresponds to the function $f_{\beta}$ in the
introduction under the conjugation between $R_{\alpha}$ and $T_{\alpha_{0}}$.
Therefore, we are interested in the Birkhoff sums 
\begin{equation}
\varphi_{n}\left(x\right)=\sum_{k=0}^{n-1}\varphi\left(T_{\alpha_{0}}^{k}\left(x\right)\right).\label{eq:defBS}
\end{equation}
Henceforth, unless explicitly stated otherwise, we work with the transformation
$T_{\alpha_{0}}$. The sequences $\left(a_{n}\right)_{n=0}^{\infty}$,
$\left(\frac{p_{n}}{q_{n}}\right)_{n=0}^{\infty}$ will correspond
to the sequence of entries and the sequence of partial convergents
in the continued fraction expansion of $\alpha=\frac{\alpha_{0}}{1+\alpha_{0}}$. 

We denote by $\lambda$ the Lebesgue measure on $\left[-1,\alpha_{0}\right)$
normalized to have total mass $1$. 

\subsection{\label{subsec:Continued-fraction-renormalization}Continued fraction
renormalization and Ostrowski expansion }

The renormalization procedure is an inductive procedure, where at
each stage we induce the original transformation $T_{\alpha_{0}}$
onto a subinterval of the interval we induced upon at the previous
stage. We denote by $I^{\left(n\right)}$ the nested sequence of intervals
which we induce upon, and by $T^{\left(n\right)}$ the first return
map of $T_{\alpha_{0}}$ onto $I^{\left(n\right)}$. The nested sequence
of intervals $I^{\left(n\right)}$ is chosen in such a way that the
induced transformations $T^{\left(n\right)}$ are all irrational rotations.
The next paragraph describes a step of induction given an irrational
rotation $T_{\alpha_{n}}$ on the interval $I_{n}=\left[-1,\alpha_{n}\right)$
defined by (\ref{eq: Irr. Rotaion}). The procedure is then iterated
recursively by rescaling and performing the induction step once again.
In general, we keep to the convention that we use $n$ as a superscript
to denote objects related to the non-rescaled $n$th step of renormalization,
and as a subscript for the rescaled version. 

\subsubsection*{One step of renormalization}

For an irrational $\alpha_{n}\in\left(0,1\right)$ let $I_{n}:=[-1,\alpha_{n})$,
$T_{\alpha_{n}}:I_{n}\rightarrow I_{n}$ defined by the formula in
(\ref{eq: Irr. Rotaion}) and $\beta_{n}\in I_{n}$. Then $T_{\alpha_{n}}$
is an exchange of two intervals of lengths $\alpha_{0}$ and $1$
respectively (namely $\left[0,\alpha_{0}\right)$ and $\left[-1,0\right)$).
The renormalization step consists of inducing $T_{\alpha_{n}}$ onto
an interval \emph{$I_{n}'$, }where\emph{ $I_{n}'$ }is obtained by\emph{
cutting} a half-open interval of size $\alpha_{0}$ from the left
endpoint of the interval $I_{0}$, i.e. $-1$, as many times as possible
in order to obtain an interval of the from $[-\alpha'_{n},\alpha_{n})$
containing zero. More precisely, let 
\[
a_{n}=[1/\alpha_{n}],\qquad\alpha'_{n}=1-a_{n}\alpha_{n}
\]
so that $[-1,0)$ contains exactly $a_{n}$ intervals of lengths $\alpha_{n}$
plus an additional remainder of length $0<\alpha'_{n}<\alpha_{n}$
(see Figure \ref{fig:One-step-of Ostrowski}). If $\beta_{n}\in[-1,-\alpha'_{n})$,
let $1\leq b_{n}\leq a_{n}$ be such that $\beta_{n}$ belongs to
the $b_{n}^{th}$ copy of the interval which is cut, otherwise set
$b_{n}:=0$, i.e. define 
\begin{equation}
b_{n}:=\begin{cases}
[(\beta_{n}-(-1))/\alpha_{n}]+1=[(1+\beta_{n})/\alpha_{n}]+1 & \mathrm{if}\,\beta_{n}\in[-1+(b_{n}-1)\alpha_{n},-1+b_{n}\alpha{}_{n})\\
0 & \mathrm{if}\,\beta_{n}\in[-\alpha'_{n},\alpha_{n}).
\end{cases}\label{eq: Ostrowsky b_n}
\end{equation}

\begin{figure}[!tbh]
\includegraphics[width=0.6\textwidth]{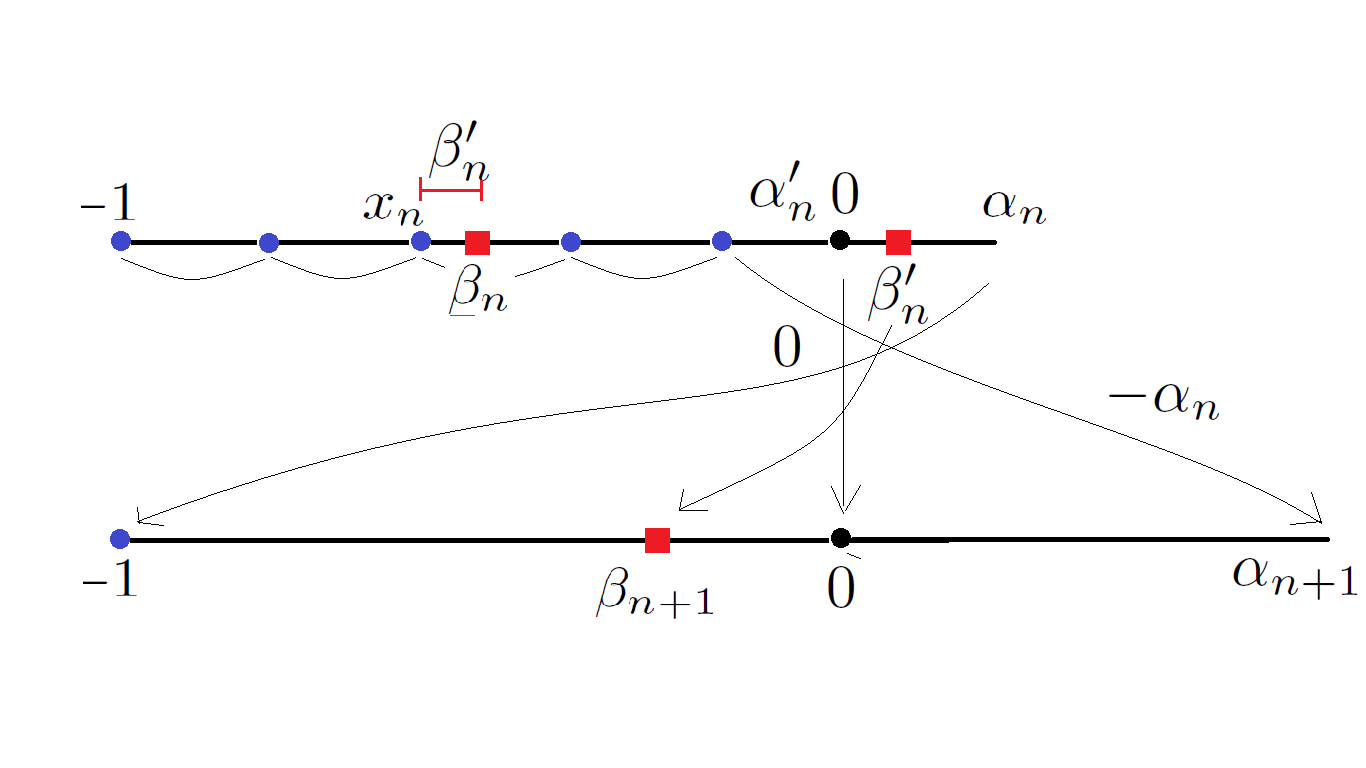}

\caption{\label{fig:One-step-of Ostrowski}One step of the Ostrowski renormalization
algorithm.}
\end{figure}

For $b_{n}\geq1$, let us define $x_{n}$ to be the left endpoint
of the copy of the interval which contains $\beta_{n}$, otherwise,
if $b_{n}=0$, set $x_{n}:=0$; let also $\beta_{n}':=\beta_{n}-x_{n}$,
so that if $b_{n}\geq1$ then $\beta_{n}'$ is the distance of $\beta_{n}$
from the left endpoint of the interval which contains it (Figure \ref{fig:One-step-of Ostrowski}).
In formulas

\begin{equation}
x_{n}:=\begin{cases}
-1+(b_{n}-1)\alpha_{n} & if\ b_{n}\geq1\\
0 & if\ b_{n}=0
\end{cases},\qquad\beta'_{n}:=\begin{cases}
\beta_{n}+1-(b_{n}-1)\alpha_{n} & \mathrm{if}\ b_{n}\geq1\\
\beta_{n} & \mathrm{if}\ b_{n}=0
\end{cases}.\label{eq:xn}
\end{equation}
Notice that $x_{n}=T_{\alpha_{n}}^{b_{n}}(0)$ and hence in particular
it belongs to the segment $\left\{ 0,T_{\alpha_{n}}(0),\dots,T_{\alpha_{n}}^{a_{n}}(0)\right\} $
of the orbit of $0$ under $T_{\alpha_{n}}$. 

Let $I_{n}'=[-\alpha'_{n},\alpha_{n})$ and note that $\beta_{n}'\in I_{n}'$
and that the induced transformation obtained as the first return map
of $T_{\alpha_{n}}$ on $I_{n}'$ is again an exchange of two intervals,
a short one $\left[-\alpha_{n}',0\right)$ and a long one $\left[0,\alpha_{n}\right)$.
Hence, if we \emph{renormalize} and \emph{flip }the picture by multiplying
by $-\alpha_{n}$, the interval $I_{n}'$ is mapped to $I_{n+1}:=\left(-1,\alpha_{n+1}\right]$
, where 

\[
\alpha_{n+1}:=\frac{\alpha'_{n}}{\alpha_{n}}=\frac{1-a_{n}\alpha_{n}}{\alpha_{n}}=\frac{1}{\alpha_{n}}-\left[\frac{1}{\alpha_{n}}\right]=G(\alpha_{n})
\]

and the transformation $T_{\alpha_{n}}$ as first return on the interval
$I_{n}'$ is conjugated to $T_{\alpha_{n+1}}$. We then set 
\[
\beta_{n+1}:=-\frac{\beta_{n}'}{\alpha_{n}}=-\frac{\beta_{n}-x_{n}}{\alpha_{n}},
\]
so that $\beta_{n+1}\in I_{n+1}.$ Thus we have defined $\alpha_{n+1}$,
$\beta_{n+1}$ and $T_{\alpha_{n+1}}$ and completed the description
of the step of induction. 

Notice that by definition of $\beta_{n}'$ and $\beta_{n+1}$we have
that 

\begin{equation}
\beta_{n}=\begin{cases}
-1+\left(b_{n}-1\right)\alpha_{n}-\alpha_{n}\beta_{n+1}=x_{n}-\alpha_{n}\beta_{n+1} & b_{n}\geq1\\
-\alpha_{n}\beta_{n+1} & b_{n}=0
\end{cases}.\label{eq:Recursive formula for Ostrowski}
\end{equation}

and hence, by using equation (\ref{eq: Ostrowsky b_n}), we get 
\begin{equation}
\beta_{n+1}=\mathcal{H}\left(\alpha_{n},\beta_{n}\right):=\begin{cases}
-\left\{ \frac{\beta_{n}+1}{\alpha_{n}}\right\}  & if\ b_{n}\geq1\\
-\frac{\beta_{n}}{\alpha_{n}} & if\ b_{n}=0.
\end{cases}\label{eq: Ostrowski beta_n}
\end{equation}

Repeating the described procedure inductively, one can prove by induction
the assertions summarized in the next proposition. 
\begin{prop}
\label{prop: Ostrowsky expansion}Let $\alpha^{\left(n\right)}:=\alpha_{0}\cdot...\cdot\alpha_{n}$
where $\alpha_{i}$ are defined inductively from $\alpha_{0}$ by
$\alpha_{n}=\mathcal{G}\left(\alpha_{n-1}\right)$ and set $\alpha^{\left(-1\right)}=1$.
Define a sequence of nested intervals $I^{\left(n\right)}$, $n=0,1,...$,
by $I^{\left(0\right)}:=\left[-1,\alpha^{\left(0\right)}\right)$,
and 
\[
I^{\left(n\right)}:=\begin{cases}
(-\alpha^{\left(n-1\right)},\alpha^{\left(n\right)}] & if\ n\ is\ odd;\\
\left[-\alpha^{\left(n\right)},\alpha^{\left(n-1\right)}\right) & if\ n\ is\ even.
\end{cases}
\]

The induced map $T^{\left(n\right)}$ of $T_{\alpha_{0}}$ on $I^{\left(n\right)}$
is conjugated to $T_{\alpha_{n}}$ on the interval $I_{n}=\left[-1,\alpha_{n}\right)$
if $n$ is even or to $T_{\alpha_{n}}$on $I_{n}=\left(-1,\alpha_{n}\right]$
if $n$ is odd, where the conjugacy is given by $\psi_{n}:I_{n}\rightarrow I^{\left(n\right)}$,
$\psi_{n}\left(x\right)=\left(-1\right)^{n}\alpha^{\left(n-1\right)}\left(x\right)$.

\medskip{}
Let $\beta_{0}\in I^{\left(0\right)}$ and let $(b_{n})_{n}$ and
$(\beta_{n})_{n}$ be the sequences inductively\footnote{Note that given $\beta_{n},$ formulas (\ref{eq: Ostrowsky b_n})
and (\ref{eq: Ostrowski beta_n}) determine first $b_{n}$ and then,
as function of $\beta_{n}$ and $b_{n}$, also $\beta_{n+1}$ and
hence $b_{n+1}$.} by the formulas (\ref{eq: Ostrowsky b_n}) and (\ref{eq: Ostrowski beta_n}).
Then we have
\begin{equation}
\beta_{0}=\sum_{n=0}^{\infty}x^{\left(n\right)},\qquad\text{where }x^{\left(n\right)}=\psi_{n}\left(x_{n}\right)=\begin{cases}
\left(-1\right)^{n}\alpha^{\left(n-1\right)}\left(-1+\left(b_{n}-1\right)\alpha_{n}\right) & 1\leq b_{n}\leq a_{n}\\
0 & b_{n}=0
\end{cases}\label{eq:Ostrowsky}
\end{equation}
and the reminders are given by
\begin{equation}
\left|\beta_{0}-\sum_{k=0}^{n}x^{\left(k\right)}\right|=\left|\beta^{(n+1)}\right|,\qquad\text{where }\beta^{\left(n\right)}:=\psi_{n}\left(\beta_{n}\right)=\text{\ensuremath{\left(-1\right)}}^{n}\alpha^{(n-1)}\beta_{n}.\label{eq: Remainder in Ostrowski expansion}
\end{equation}
\end{prop}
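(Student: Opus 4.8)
The plan is to prove all the assertions simultaneously by induction on $n$, using the single step of renormalization described above — the passage from $(\alpha_n,\beta_n,T_{\alpha_n})$ to $(\alpha_{n+1},\beta_{n+1},T_{\alpha_{n+1}})$ — as the inductive step. The induction hypothesis at level $n$ has three parts: (i) $I^{(n)}$ has the form stated in the proposition and $I^{(n)}\subseteq I^{(n-1)}$; (ii) the first return map $T^{(n)}$ of $T_{\alpha_0}$ on $I^{(n)}$ is conjugate to $T_{\alpha_n}$ on $I_n$ via $\psi_n(x)=(-1)^n\alpha^{(n-1)}x$; (iii) $\beta^{(n)}:=\psi_n(\beta_n)$ lies in $I^{(n)}$ and $\beta_0=\sum_{k=0}^{n-1}x^{(k)}+\beta^{(n)}$. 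The base case $n=0$ is immediate: $\psi_0=\mathrm{id}$, $I^{(0)}=[-1,\alpha^{(0)})$, $T^{(0)}=T_{\alpha_0}$, and the sum in (iii) is empty.

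For the inductive step on (i) and (ii), I would invoke two elementary properties of first return maps. First, inducing commutes with conjugation: if $\psi$ conjugates $S$ to $S'$ and $J$ is a subinterval of the domain of $S'$, then the first return map of $S'$ to $J$ is conjugate via $\psi$ to the first return map of $S$ to $\psi^{-1}(J)$. Second, inducing is transitive for nested intervals: if $J\subseteq K$ then the first return map of $T_{\alpha_0}$ to $J$ agrees with the first return map of (the first return map of $T_{\alpha_0}$ to $K$) to $J$, because every visit of a $T_{\alpha_0}$-orbit to $J$ is in particular a visit to $K$. Setting $I^{(n+1)}:=\psi_n(I_n')$ with $I_n'=[-\alpha_n\alpha_{n+1},\alpha_n)$, and applying the first property with $\psi=\psi_n$, the one-step analysis shows that the first return map of $T^{(n)}$ to $I^{(n+1)}$ is conjugate via $\psi_n$ to the two-interval exchange induced by $T_{\alpha_n}$ on $I_n'$, and hence — after the affine ``renormalize and flip'' coordinate change $y\mapsto -\alpha_n y$ carrying $I_{n+1}$ onto $I_n'$ — to $T_{\alpha_{n+1}}$ on $I_{n+1}$. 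Composing the two conjugacies gives $\psi_{n+1}(y)=\psi_n(-\alpha_n y)=(-1)^{n+1}\alpha^{(n)}y$, and transitivity of inducing identifies $T^{(n+1)}$ with the first return map of $T_{\alpha_0}$ itself on $I^{(n+1)}$. Computing $\psi_n(I_n')$ using $\alpha^{(n-1)}\alpha_n=\alpha^{(n)}$ and $\alpha^{(n)}\alpha_{n+1}=\alpha^{(n+1)}$ yields the stated endpoints of $I^{(n+1)}$, while $I^{(n+1)}\subseteq I^{(n)}$ follows from $I_n'\subseteq I_n$.

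For the part involving $\beta$: the one-step construction gives $\beta_{n+1}\in I_{n+1}$, hence $\beta^{(n+1)}=\psi_{n+1}(\beta_{n+1})\in\psi_{n+1}(I_{n+1})=I^{(n+1)}$. Using $x^{(n)}=\psi_n(x_n)=(-1)^n\alpha^{(n-1)}x_n$, the relation $\beta_{n+1}=-\beta_n'/\alpha_n=-(\beta_n-x_n)/\alpha_n$, and the definition $\beta^{(m)}=(-1)^m\alpha^{(m-1)}\beta_m$, one computes
\[
\beta^{(n)}-x^{(n)}=(-1)^n\alpha^{(n-1)}(\beta_n-x_n)=(-1)^n\alpha^{(n-1)}\left(-\alpha_n\beta_{n+1}\right)=(-1)^{n+1}\alpha^{(n)}\beta_{n+1}=\beta^{(n+1)},
\]
so the identity in (iii) becomes $\beta_0=\sum_{k=0}^{n}x^{(k)}+\beta^{(n+1)}$, closing the induction. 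This last identity is exactly the remainder formula (\ref{eq: Remainder in Ostrowski expansion}), and letting $n\to\infty$ gives $\beta_0=\sum_{k=0}^{\infty}x^{(k)}$ once one checks $|\beta^{(n)}|=\alpha^{(n-1)}|\beta_n|\to 0$; this holds because $|\beta_n|\le 1$ and the products $\alpha^{(n)}=\alpha_0\cdots\alpha_n$ of iterates of the Gauss map tend to $0$ (for instance $\alpha_i\alpha_{i+1}<1/2$ for every $i$, whence $\alpha^{(2k-1)}<2^{-k}$).

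The part I expect to require the most care is not any individual computation but the consistent bookkeeping of orientations: each renormalization step includes a flip, so $\psi_n$ is orientation preserving when $n$ is even and orientation reversing when $n$ is odd, and one must track precisely which endpoint of each interval is open and which is closed — together with the use of Remark \ref{rem: Circle maps}, which allows passing freely between $[-1,\alpha_n)$ and $(-1,\alpha_n]$ at the level of circle maps — in order to land on exactly the interval $I^{(n+1)}$ as written in the proposition. Once the conventions are pinned down this is routine, but it is where sign errors are most likely to creep in.
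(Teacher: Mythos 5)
Your proof is correct and is exactly the route the paper intends: the paper offers no written proof beyond ``repeating the described procedure inductively,'' and your induction --- transitivity of inducing, the composed conjugacy $\psi_{n+1}(y)=\psi_n(-\alpha_n y)=(-1)^{n+1}\alpha^{(n)}y$, the telescoping identity $\beta^{(n)}-x^{(n)}=\beta^{(n+1)}$, and the decay $\alpha^{(2k-1)}<2^{-k}$ --- is precisely that argument carried out in full. The one point of friction is the one you yourself flag: the computation actually gives $\psi_n(I_n')=[-\alpha^{(n+1)},\alpha^{(n)})$ for $n$ even, which matches the proposition's displayed formula for $I^{(n+1)}$ only after swapping its two parity cases --- already visible at $n=1$, where the induced interval is $I^{(1)}=I_0'=[-\alpha^{(1)},\alpha^{(0)})$ --- so this is an inconsistency in the paper's own statement (its formula for $I^{(n)}$ versus its formula for $\psi_n$), not a gap in your argument.
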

The expansion in (\ref{eq:Ostrowsky}) is an \emph{Ostrowski type}
expansion for $\beta_{0}$ in terms of $\alpha_{0}$. We call the
integers $b_{n}$ the \emph{entries} in the Ostrowski expansion of
$\beta_{0}$. 
\begin{rem}
\label{rem: Ostrowski Remark}Partial approximations in the Ostrowski
expansions have the following dynamical interpretation. It well known
that, for any $n\in\mathbb{\ensuremath{N}}$, the finite segment$\left\{ T_{\alpha_{0}}^{i}\left(0\right):\ i=0,...,q_{n}+q_{n-1}-1\right\} $
of the orbit of $0$ under $T_{\alpha_{0}}$ (which can be thought
of as a rotation on a circle) induce a partition of $[-1,\alpha_{0})$
into intervals of two lengths (see for example \cite{sinai1994topics};
these partitions correspond to the classical Rokhlin-Kakutani representation
of a rotation as two towers over an induced rotation given by the
Gauss map, see also Section \ref{subsec: Tower Structure} and Remark
\ref{rem: Towers generate the sigma-field}). The finite Ostrowski
approximation $\sum_{k=0}^{n}x^{\left(k\right)}$ gives one of the
endpoints of the unique interval of this partition which contains
$\beta_{0}$ (if it is the left or the right one depends on the parity
as well as on whether $b_{n}$ is zero or not). In particular, we
have that

\[
\sum_{k=0}^{n}x^{\left(k\right)}\in\left\{ T_{\alpha_{0}}^{i}\left(0\right):\ i=0,...,q_{n}+q_{n-1}-1\right\} \cup\left\{ \alpha_{0}\right\} ,\qquad n\in\bbN\cup\left\{ 0\right\} .
\]
\end{rem}
\begin{rem}
\label{rem: Infiniteness of Ostrowsky expansion}Since the points
$\alpha^{\left(n\right)}$ are all in the orbit of the point $0$
by the rotation $T_{\alpha_{0}}$, it follows from the correspondence
between $T_{\alpha_{0}}$ and $R_{\alpha}$ that the Ostrowski expansion
of $\beta_{0}$ appearing in the previous proposition is finite, i.e.
$\beta_{0}=\sum_{n=0}^{N}x^{\left(n\right)}$ for some $N\in\bbN$
if and only if $\beta\in\left\{ n\alpha\ \mod1:\ n\in\bbZ\right\} $.
This condition is well known to be equivalent to the function $f_{\beta}$
(and hence also $\varphi$) being a coboundary (see \cite{petersen1973series})
. 
\end{rem}
It follows from the description of the renormalization algorithm that
\[
(\alpha_{n+1},\beta_{n+1})=\hat{\mathcal{G}}(\alpha_{n},\beta_{n}):=\left(\mathcal{G}\left(\alpha_{n}\right),\mathcal{H}\left(\alpha_{n,}\beta_{n}\right)\right)
\]
 where the function $\mathcal{H}$ is defined by (\ref{eq: Ostrowski beta_n}).
The ergodic properties of a variation on the map
\begin{equation}
\hat{\mathcal{G}}:X\rightarrow X,\quad X=\left\{ \left(\alpha,\beta\right):\ \alpha\in\left[0,1\right)\setminus\bbQ,\ \beta\in\left[-1,\alpha\right)\right\} \label{eq:Ghat def}
\end{equation}
were studied among others in \cite{shunji1986some}.

Introduce the functions $a,b:X\to\mathbb{N}$ defined by 

\[
a(\alpha,\beta):=[1/\alpha],\qquad b(\alpha,\beta):=\begin{cases}
[(1+\beta)/\alpha]+1 & \beta\in[-1,-1+a(\alpha,\beta)\alpha)\\
0 & \beta\in[-1+a(\alpha,\beta)\alpha,\alpha)
\end{cases}.
\]

The functions are defined so that the sequences $\left(a_{n}\right)_{n}$
and $\left(b_{n}\right)_{n}$ of continued fractions and Ostrowski
entries are respectively given by $a_{n}=a\left(\hat{\mathcal{G}}^{n}\left(\alpha_{0},\beta_{0}\right)\right)$,
$b_{n}=b\left(\hat{\mathcal{G}}^{n}\left(\alpha_{0},\beta_{0}\right)\right)$
for any $n\in\mathbb{N}.$

By Remark \ref{rem: Infiniteness of Ostrowsky expansion}, the restriction
of the space $X$ to 
\begin{equation}
\tilde{X}:=\left\{ \left(\alpha_{0},\beta_{0}\right)\in X:\ \beta\notin\left\{ n\alpha\ \mod1\right\} \ \text{for }\text{\ensuremath{\alpha}}=\frac{\alpha_{0}}{\alpha_{0}+1},\ \beta=\frac{\beta_{0}+1}{\alpha+1}\right\} \label{eq:Xtilde def}
\end{equation}
 is invariant with respect to $\hat{G}$ and we partition this space
into three sets $X_{G},$ $X_{B_{-}}$, $X_{B_{+}}\subset\tilde{X}$
defined by 
\begin{equation}
\begin{aligned} & X_{G}:=\left\{ \left(\alpha,\beta\right):\ b\left(\alpha,\beta\right)\geq1\right\}  &  & X_{B}:=\left\{ \left(\alpha,\beta\right):\ b\left(\alpha,\beta\right)=0\right\} \\
 & X_{B_{+}}:=X_{B}\cap\left\{ \left(\alpha,\beta\right):\ \beta\geq0\right\}  &  & X_{B_{-}}:=X_{B}\cap\left\{ \left(\alpha,\beta\right):\ \beta<0\right\} 
\end{aligned}
\label{eq:GBB def}
\end{equation}
Explicitly, in terms of the relative position of $\alpha,\beta$,
these sets are given by

\begin{align*}
 & X_{G}=\left\{ \left(\alpha,\beta\right)\in\tilde{X}:\ \beta\in\left[-1,-1+a\left(\alpha,\beta\right)\,\alpha\right)\right\} ,\\
 & X_{B_{-}}=\left\{ \left(\alpha,\beta\right)\in\tilde{X}:\ \beta\in\left[-1+a\left(\alpha,\beta\right)\,\alpha,0\right)\right\} ,\\
 & X_{B_{+}}=\left\{ \left(\alpha,\beta\right)\in\tilde{X}:\ \beta\in\left[0,\alpha\right)\right\} .
\end{align*}
The reason for the choice of names $G$, $B_{-}$, $B_{+}$ for the
thee parts of parameter space, which stand for \emph{Good} ($G$)
and \emph{Bad} ($B$), where \emph{Bad} has two subcases, $B_{-}$
and $B_{+}$ (according to whether $\beta$ is positive or negative),
will be made clear in Section \ref{subsec:Positivity-of-products}. 

\subsection{\label{subsec: Tower Structure}Description of the Kakutani-Rokhlin
towers obtained from renormalization.}

We assume throughout the present Section and Sections \ref{subsec: Symbolic Coding},
\ref{subsec: Markov chain towers} that we are given a fixed pair
$\left(\alpha_{0},\beta_{0}\right)\in\tilde{X}$. The symbols $q_{n}$
used in this Section refer to the denominators of the $n^{th}$ convergent
in the continued fraction expansion of $\alpha$, where $\alpha$
is related to $\alpha_{0}$ via (\ref{eq:Correcpondence between alpha and alpha_zero}).

\smallskip{}
The renormalization algorithm described above defines a nested sequence
of intervals $I^{\left(n\right)}$. We describe here below how the
original transformation $T_{\alpha_{0}}$ can be represented as a
union of\emph{ towers} in a \emph{Kakutani skyscraper} (the definition
is given below) with base $I^{\left(n\right)}$ ; the tower structure
of the skyscraper corresponding to the $(n+1)^{th}$ stage of renormalization
is obtained from the towers of the previous skyscraper corresponding
to the n stage $n^{th}$ by a \emph{cutting and stacking} procedure.
We will use these towers to describe what we call an\emph{ adic} symbolic
coding of the interval $I=\left[-1,\alpha_{0}\right)$ (see section
\ref{subsec: Symbolic Coding}). In what follows, we give a detailed
description of the tower structure and the coding. 

\smallskip{}
Let us first recall that if a measurable set $B\subset[-1,\alpha_{0})$
and a natural integer$h$ are such that the union $\bigcup_{i=0}^{h-1}T_{\alpha_{0}}^{i}B$
is disjoint, we say that the union is a \emph{(dynamical) tower} of
base $B$ and height $h$. The union can indeed be represented as
a tower with $h$ floors, namely $T_{\alpha_{0}}^{i}B$ for $i=0,\dots,h-1,$
so that $T_{\alpha_{0}}$ acts by mapping each point in each level
except the last one, to the point directly above it. A disjoint union
of towers is called a \emph{skyscraper} (see for example \cite{nadkarni1995basic}).
A \emph{subtower} of a tower of base $B$ and height $h$ is a tower
with the same height whose base is a subset$B'\subset B$.

\smallskip{}
As it was explained in the previous section, the induced map of $T_{\alpha_{0}}$
on $I^{(n)}$ is an exchange of two intervals, a\emph{ long} and a\emph{
short} one. If $n$ is even, the long one is given by $\left[-\alpha^{\left(n-1\right)},0\right)$
and the short one by $\left[0,\alpha^{\left(n\right)}\right)$. If
$n$ is odd the long and short interval are respectively given by
$\left[0,\alpha^{\left(n-1\right)}\right)$ and $\left[-\alpha^{\left(n\right)},0\right)$.
In both cases, these are the preimages of the intervals $\left[-1,0\right)$
and $\left[0,\alpha_{n}\right)$ under the conjugacy map $\psi_{n}:I^{\left(n\right)}\rightarrow I_{n}$
given in Proposition \ref{prop: Ostrowsky expansion}. Notice also
that $\beta^{\left(n\right)}=\psi_{n}^{-1}\left(\beta_{n}\right)$,
the non rescaled marked point corresponding to the point $\beta_{n}\in I_{n}$,
further divides the two mentioned subintervals of $I^{\left(n\right)}$
into three, by cutting either the long or the short into two subintervals.
We denote these three intervals $I_{M}^{(n)},I_{L}^{(n)}$ and $I_{S}^{(n)}$,
where the letters $M,L,S$, respectively correspond to \emph{middle}
(M), \emph{long }(L) and \emph{short} (S), and $I_{M}^{(n)}$ denotes
the middle interval, while $I_{L}^{(n)}$ and $I_{S}^{(n)}$ denote
(what is left of) the long one and the short one, after removing the
middle interval. Explicitly, it is convenient to describe the intervals
in terms of the partition $X_{G}$, $X_{B_{-}}$, $X_{B_{+}}$ defined
in the end of the previous section. Thus, set

\[
\begin{aligned} & I_{L}^{\left(n\right)}=\psi_{n}^{-1}\left(\left[-1,\beta_{n}\right)\right), &  & I_{M}^{\left(n\right)}=\psi_{n}^{-1}\left(\left[\beta_{n},0\right)\right), &  & I_{S}^{\left(n\right)}=\psi_{n}^{-1}\left(\left[0,\alpha_{n}\right)\right) & \ensuremath{} & \mathrm{if}\ \left(\alpha_{n},\beta_{n}\right)\in X_{G}\bigcup X_{B_{-}},\\
 & I_{L}^{\left(n\right)}=\psi_{n}^{-1}\left(\left[-1,0\right)\right), &  & I_{M}^{\left(n\right)}=\psi_{n}^{-1}\left(\left[0,\beta_{n}\right)\right), &  & I_{S}^{\left(n\right)}=\psi_{n}^{-1}\left[\beta_{n},\alpha_{n}\right) &  & \mathrm{if}\ \left(\alpha_{n},\beta_{n}\right)\in X_{B_{+}}.
\end{aligned}
\]

We claim that the first return time of $T_{\alpha_{0}}$ to the interval
$I^{\left(n\right)}$ is constant on the subintervals $I_{L}^{\left(n\right)}$,
$I_{M}^{\left(n\right)}$ and $I_{S}^{\left(n\right)}$. Moreover,
the first return time over $I_{L}^{\left(n\right)}$ and $I_{S}^{\left(n\right)}$
equals to $q_{n}$ and $q_{n-1}$ respectively, while the first return
time over $I_{M}^{\left(n\right)}$ equals either $q_{n}$ or $q_{n-1}$,
depending on whether $\beta^{\left(n\right)}\in\left[-\alpha^{\left(n-1\right)},0\right)$
or $\beta^{\left(n\right)}\in\left[0,\alpha^{\left(n\right)}\right)$
and hence on whether the middle interval was cut from the long or
the short interval respectively. For $J\in\left\{ L,M,S\right\} $,
let us denote by $h_{J}^{\left(n\right)}$ the first return time of
$I_{J}^{\left(n\right)}$ to $I^{\left(n\right)}$ under $T_{\alpha_{0}}$
and let us denote by $Z_{J}^{\left(n\right)}$ the tower with base
$I_{J}^{\left(n\right)}$ and height $h_{J}^{\left(n\right)}$. 

\smallskip{}
Let us how describe how the tower structure at stage $n+1$ of the
renormalization is related to the tower structure at stage $n$. We
will describe in detail as an example the particular case where $n$
is odd and $\beta^{\left(n\right)}\in\left[-\alpha^{\left(n-1\right)},-\alpha^{\left(n\right)}\right)$
(i.e. $\beta^{\left(n\right)}\notin I^{\left(n+1\right)}$), or equivalently
$\left(\alpha_{n},\beta_{n}\right)\in X_{G}$ (see also Figure \ref{fig:The-tower-structure}).
The other cases are summarized in Proposition \ref{prop: Tower Structure}
below. In the considered case, the heights $h_{J}^{\left(n\right)}$of
the three towers $Z_{J}^{\left(n\right)}$, $J\in\left\{ L,M,S\right\} ,$
at stage $n$ are given by $h_{J}^{\left(n\right)}=q_{n}$ for $J\in\left\{ M,L\right\} $
and $h_{S}^{\left(n\right)}=q_{n-1}$. By the structure of the first
return map $T^{\left(n\right)}$, the intervals $\left(T^{\left(n\right)}\right)^{i}\left(I_{S}^{\left(n\right)}\right)$,
$i=1,...,a_{n}$ partition the interval $\left[-\alpha^{\left(n-1\right)},-\alpha^{\left(n\right)}\right)=I^{\left(n\right)}\setminus I^{\left(n+1\right)}$
into intervals of equal length, and it follows that the first return
time of $T_{\alpha_{0}}$ is constant on $I_{S}^{\left(n\right)}$
and equals to 
\[
a_{n}\cdot h_{L}^{\left(n\right)}+h_{S}^{\left(n-1\right)}=a_{n}q_{n}+q_{n-1}=q_{n+1}.
\]

\begin{figure}[!tbh]
\includegraphics[width=0.6\textwidth]{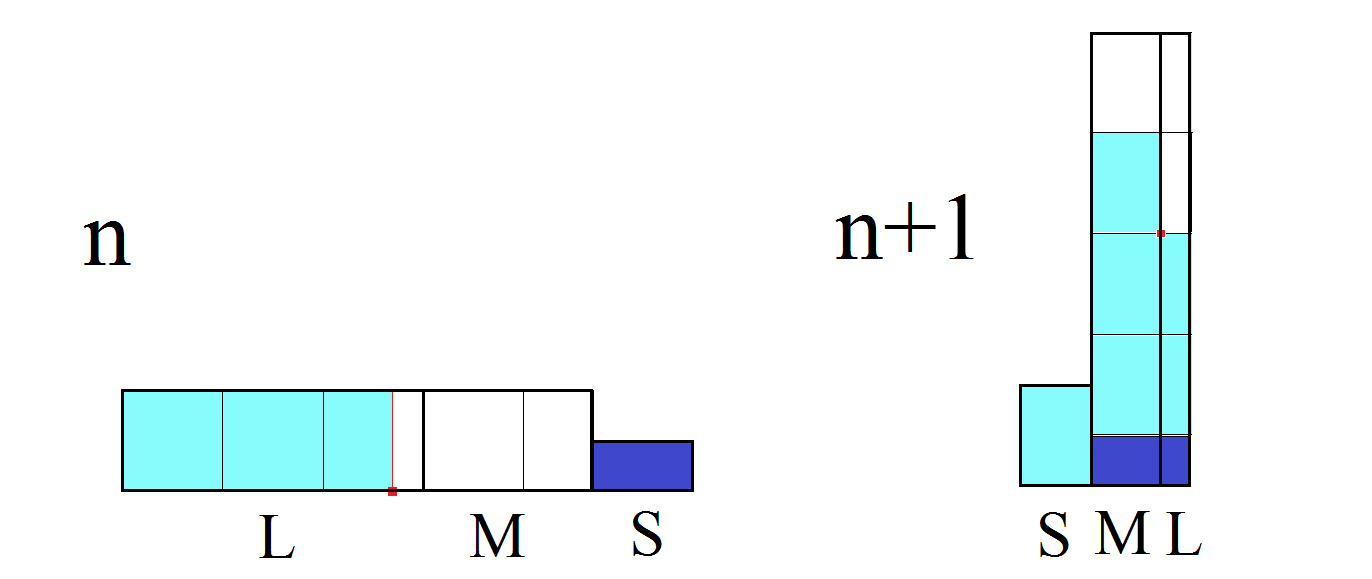}

\caption{\label{fig:The-tower-structure}The tower structure at step $n$ and
$n+1$ in the case when $\left(\alpha_{n},\beta_{n}\right)\in X_{G}.$
In this example $a_{n}=4$ and $b_{n}=3$. }
\end{figure}

It also follows that the tower over $I_{S}^{\left(n\right)}\subset I^{\left(n+1\right)}$
at stage $n+1$ is obtained by stacking the subtowers over the intervals
$\left(T^{\left(n\right)}\right)^{i}\left(I_{S}^{\left(n\right)}\right)$
on top of the tower $Z_{S}^{\left(n\right)}$ (as shown in Figure
\ref{fig:The-tower-structure}). By construction, the point $\beta^{\left(n+1\right)}$
is obtained by vertically projecting the point $\beta^{\left(n\right)}$
from its location in the tower over $I_{S}^{\left(n\right)}$ down
to the interval $I_{S}^{\left(n\right)}$. According to our definitions,
$\beta^{\left(n+1\right)}$ divides $I_{S}^{\left(n\right)}$ into
$I_{L}^{\left(n+1\right)}=\left[\beta^{\left(n\right)},\alpha^{\left(n-1\right)}\right)$
and $I_{M}^{\left(n+1\right)}=\left[0,\beta^{\left(n\right)}\right)$.
As we have seen, the height of the towers at stage $n+1$ over the
intervals $I_{M}^{\left(n+1\right)}$ and $I_{L}^{\left(n+1\right)}$
is the same and equals $q_{n+1}$, but the composition of the towers
is different. The tower $Z_{M}^{\left(n+1\right)}$ is obtained by
stacking, on top of the bottom tower $Z_{S}^{\left(n\right)}$, first
$b_{n}$ subtowers of $Z_{L}^{\left(n\right)}$ and then $a_{n}-b_{n}$
subtowers of $Z_{M}^{\left(n\right)}$ on top of them; $Z_{L}^{\left(n+1\right)}$
has a similar structure, with the tower $Z_{S}^{\left(n\right)}$
in the bottom, but with $b_{n}-1$ subtowers of $Z_{L}^{\left(n\right)}$
on top and then $a_{n}-b_{n}+1$ subtowers of $Z_{M}^{\left(n\right)}$
stacked over (see Figure \ref{fig:The-tower-structure}). The tower
over $I_{S}^{\left(n+1\right)}=I_{M}^{\left(n\right)}$ remains unchanged,
i.e $Z_{S}^{\left(n+1\right)}=Z_{M}^{\left(n\right)}$. 

\smallskip{}
It is convenient to describe the tower structure in the language of
\emph{substitutions}. Let us recall that a substitution $\tau$ on
a finite alphabet $\mathcal{A}$ is a map which associates to each
letter of $\mathcal{A}$ a finite word in the alphabet $\mathcal{A}$.
To each $(\alpha,\beta)$ with $\beta$ rational or $\alpha,\beta,1$
linearly independent over $\bbQ$, we associate a sequence $(\tau_{n})_{n}$
of substitutions over the alphabet $\{L,M,S\}$, where for $J\in\left\{ L,M,S\right\} $,
\[
\tau_{n}(J)=J_{0}J_{1}\cdots J_{k},\qquad\text{where\,}J,J_{0},\dots,J_{k}\in\{L,M,S\},
\]
if and only if the tower $Z_{J}^{\left(n+1\right)}$ consists of subtowers
of $Z_{J_{i}}^{\left(n\right)}$, $i=0,...,k$ stacked on top of each
other in the specified order, i.e. the subtower of $Z_{J_{i+1}}^{\left(n\right)}$
is stacked on top of $Z_{J_{i}}^{\left(n\right)}$. More formally,
\begin{equation}
\tau_{n}(J)=J_{0}J_{1}\cdots J_{k}\qquad\text{\ensuremath{\Leftrightarrow}\qquad}h_{J}^{\left(n+1\right)}=\sum_{j=0}^{k}h_{J_{j}}^{\left(n\right)}\quad and\quad(T^{(n)})^{i}(x)\in I_{J_{i}}^{(n)}\quad\forall x\in I_{J}^{(n+1)},\quad i=0,...,k.\label{eq:towerrecursivestructure}
\end{equation}

For example, in the case discussed above, since the tower $Z_{M}^{\left(n+1\right)}$
is obtained by stacking, on top of each other, in order, $Z_{S}^{\left(n\right)}$,
then $b_{n}$ subtowers of $Z_{L}^{\left(n\right)}$ and then $a_{n}-b_{n}$
subtowers of $Z_{M}^{\left(n\right)}$ , we have
\[
\tau_{n}(M)=S\underbrace{\,L\,\cdots\,L}_{b_{n}\text{ times}}\underbrace{M\,\cdots\,M}_{a_{n}-b_{n}\text{ times}}.
\]
We will use the convention of writing $J^{n}$ for the block $J\cdots J$
where the symbol $J$ is repeated $n$ times. With this convention,
the above substitution can be written $\tau_{n}(M)=SL^{b_{n}}M^{a_{n}-b_{n}}.$ 

If $\omega$ is a word $\omega=J_{0}J_{1}\cdots J_{k}$where we will
denote by $\omega_{i}$ the letter indexed by $0\leq i$<$\left|\omega\right|$.
Using this notation, we can rewrite (\ref{eq:towerrecursivestructure})
as
\[
h_{J}^{\left(n+1\right)}=\sum_{i=0}^{\left|\omega\right|-1}h_{\tau_{n}(J)_{i}}^{\left(n\right)}.
\]

\smallskip{}
We summarize the tower structure and the associated sequence of substitutions
in the following proposition. The substitution $\tau_{n}$ is determined
by the location of $\beta^{\left(n\right)}\in I^{\left(n\right)}$,
or equivalently, by the non-rescaled parameters $\left(\alpha_{n},\beta_{n}\right)$
and one can check that there are three separate cases corresponding
to the parameters being in $X_{G}$, $X_{B_{-}}$ or $X_{B_{+}}$.
One of the cases was analyzed in the discussion above, while the other
cases can be deduced similarly, and the proof of the proposition is
a straightforward induction on $n$. 
\begin{prop}
\label{prop: Tower Structure}The first return time function of $T_{\alpha_{0}}$
to $I^{\left(n\right)}$ is constant on each of the three intervals
$I_{J}^{\left(n\right)}$, $J\in\left\{ L,M,S\right\} $. Thus, for
$n=0,1,2,...$, 
\[
I^{\left(0\right)}=\bigcup_{J\in\left\{ L,M,S\right\} }Z_{J}^{\left(n\right)}\qquad\text{where\ \ensuremath{Z_{J}^{\left(n\right)}}=\ensuremath{\bigcup\limits _{i=0}^{h_{J}^{\left(n\right)}-1}T_{\alpha_{0}}^{i}I_{J}^{\left(n\right)}}}
\]
where $h_{J}^{\left(n\right)}$ is the value of the first return time
function on $I_{j}^{\left(n\right)}$, which is given by 
\[
h_{L}^{\left(n\right)}=q_{n},\quad h_{S}^{\left(n\right)}=q_{n-1},\quad h_{M}^{\left(n\right)}=\begin{cases}
q_{n} & if\ \beta_{n}\in\left[-1,0\right)\\
q_{n-1} & if\ \beta_{n}\in\left[0,\alpha_{n}\right)
\end{cases}.
\]
The sequence of substitutions associated to the pair $\left(\alpha,\beta\right)$
is given by the formulas, determined by the following cases
\begin{itemize}
\item If $\left(\alpha_{n},\beta_{n}\right)\in X_{G}$
\[
\begin{cases}
\tau_{n}\left(L\right)=SL^{b_{n}-1}M^{a_{n}-b_{n}+1}\\
\tau_{n}\left(M\right)=SL^{b_{n}}M^{a_{n}-b_{n}}\\
\tau_{n}\left(S\right)=M
\end{cases}
\]
\item If $\left(\alpha_{n},\beta_{n}\right)\in X_{B_{-}}$
\[
\begin{cases}
\tau_{n}\left(L\right)=SL^{a_{n}}\\
\tau_{n}\left(M\right)=M\\
\tau_{n}\left(S\right)=L
\end{cases}
\]
\item If $\left(\alpha_{n},\beta_{n}\right)\in X_{B_{+}}$
\[
\begin{cases}
\tau_{n}\left(L\right)=SL^{a_{n}}\\
\tau_{n}\left(M\right)=ML^{a_{n}}\\
\tau_{n}\left(S\right)=L
\end{cases}
\]
\end{itemize}
\end{prop}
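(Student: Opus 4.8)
The plan is to argue by induction on $n$, carrying along as inductive hypothesis the full package of data: that the first return time of $T_{\alpha_{0}}$ to $I^{(n)}$ is constant on each of $I_{L}^{(n)},I_{M}^{(n)},I_{S}^{(n)}$ with the asserted values $h_{L}^{(n)}=q_{n}$, $h_{S}^{(n)}=q_{n-1}$, $h_{M}^{(n)}\in\{q_{n},q_{n-1}\}$ according to the sign of $\beta_{n}$, and that $I^{(0)}$ is the disjoint union of the three towers $Z_{J}^{(n)}$. The base case $n=0$ (and $n=1$, to get the parity bookkeeping started) is a direct computation from the definition \eqref{eq: Irr. Rotaion} of $T_{\alpha_{0}}$: the two intervals exchanged have lengths $\alpha_{0}$ and $1$, so $q_{-1}=1$, $q_{0}=a_{0}$, and $I^{(0)}=[-1,\alpha_{0})$ is partitioned by the orbit segment $\{0,T_{\alpha_{0}}(0),\dots\}$ in the classical way (this is exactly the content cited in Remark \ref{rem: Ostrowski Remark}); the location of $\beta_{0}$ relative to $0$ and to $-1+a_{0}\alpha_{0}$ distinguishes the three cases $X_{G},X_{B_{-}},X_{B_{+}}$.

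For the inductive step, I would use the one-step renormalization picture already set up in Section \ref{subsec:Continued-fraction-renormalization}. Assume the tower structure over $I^{(n)}$. The induced map $T^{(n)}$ is an exchange of the long interval (first return time $h_{L}^{(n)}$) and the short one (first return time $h_{S}^{(n)}$), and the orbit segment $\{(T^{(n)})^{i}I_{S}^{(n)}:i=1,\dots,a_{n}\}$ tiles $I^{(n)}\setminus I^{(n+1)}$ in equal-length pieces, since $\alpha_{n}'=1-a_{n}\alpha_{n}$. Reading off which of $I_{L}^{(n)},I_{M}^{(n)},I_{S}^{(n)}$ each of these pieces, together with the surviving part $I^{(n+1)}$, gets mapped into under $T^{(n)}$ is a finite combinatorial check that depends only on where $\beta^{(n)}$ sits — equivalently on whether $(\alpha_{n},\beta_{n})\in X_{G}$, $X_{B_{-}}$, or $X_{B_{+}}$ — and this check produces exactly the three substitution formulas in the statement; the new heights are then forced by $h_{J}^{(n+1)}=\sum_{i}h_{\tau_{n}(J)_{i}}^{(n)}$, and one verifies $a_{n}q_{n}+q_{n-1}=q_{n+1}$ reproduces $h_{L}^{(n+1)}=q_{n+1}$, $h_{S}^{(n+1)}=q_{n}$, while $h_{M}^{(n+1)}$ comes out $q_{n+1}$ or $q_{n}$ depending on whether the new middle interval was split off the (new) long or short interval, which is precisely the dichotomy $\beta_{n+1}\in[-1,0)$ versus $\beta_{n+1}\in[0,\alpha_{n+1})$. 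The worked case in the text ($n$ odd, $(\alpha_{n},\beta_{n})\in X_{G}$) gives the template; the remaining cases differ only in bookkeeping, in particular in whether $\beta^{(n)}$ lies in $I^{(n+1)}$ or not (when it does, i.e. $b_{n}=0$, the middle interval simply descends into one of the two new intervals without being cut by further subtowers, which is why $\tau_{n}(M)$ becomes $M$ or $ML^{a_{n}}$).

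The main obstacle is not any single hard estimate but the careful case analysis: one must track the parity of $n$ (which governs the orientation of $I^{(n)}$ and hence the left/right roles of long and short), the three regimes for $(\alpha_{n},\beta_{n})$, and within $X_{G}$ the position of $b_{n}$ relative to $a_{n}$ — and check that in every combination the cutting-and-stacking of the old towers reproduces the claimed $\tau_{n}$ and the claimed heights, including that $\beta^{(n+1)}$, obtained by projecting $\beta^{(n)}$ vertically down through the tower over $I_{S}^{(n)}$ (or $I_{M}^{(n)}$, in the $X_{B}$ cases), lands where the definition of $I_{L}^{(n+1)},I_{M}^{(n+1)},I_{S}^{(n+1)}$ in the previous subsection says it should. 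Since each of these verifications is a finite, elementary computation with intervals on the circle $\bbR/(1+\alpha_{0})\bbZ$, the induction closes, and the proposition follows.
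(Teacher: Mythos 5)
Your plan is exactly the paper's argument: the authors work out the case $n$ odd, $\left(\alpha_{n},\beta_{n}\right)\in X_{G}$ in the discussion preceding the proposition and then state that ``the other cases can be deduced similarly, and the proof of the proposition is a straightforward induction on $n$,'' which is precisely the induction-plus-case-analysis you describe, including the use of the recursion $h_{J}^{\left(n+1\right)}=\sum_{i}h_{\tau_{n}(J)_{i}}^{\left(n\right)}$ and the identity $a_{n}q_{n}+q_{n-1}=q_{n+1}$. No substantive difference.
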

\begin{rem}
\label{rem: Towers generate the sigma-field}It can be shown that
due to irrationality of $\alpha$, the levels of the towers $Z_{J}^{\left(n\right)}$,
$J\in\left\{ L,M,S\right\} $ form an increasing sequence of partitions
that separates points and hence generates the Borel $\sigma$-algebra
on $\left[-1,\alpha_{0}\right)$ (see for example \cite{sinai1994topics}).
\end{rem}
Let $A_{n}$, $n\in\mathbb{N}$, be the $3\times3$ \emph{incidence
matrix} of the substitution $\tau_{n}$ with entries indexed by $\{L,M,S\}$,
where the entry indexed by $\left(J_{1},J_{2}\right)$ , which we
will denote by $(A_{n})_{J_{1},J_{2}}$, gives the number of subtowers
contained in $Z_{J_{2}}^{(n)}$ among the subtowers of level $n$
which are stuck to form of tower $Z_{J_{1}}^{(n+1)}$. Equivalently,
the entry $(A_{n})_{J_{1},J_{2}}$ gives the number of occurrences
of the letter $J_{2}$ in the word $\tau_{n}\left(J_{1}\right)$.
If we adopt the convention that the order of rows/columns of $A_{n}$
corresponds to $L,M,S$, it follows from Proposition \ref{prop: Tower Structure}
that these matrices are then explicitly given by:

\begin{equation}
A_{n}=\begin{pmatrix}b_{n}-1 & a_{n}-b_{n}+1 & 1\\
b_{n} & a_{n}-b_{n} & 1\\
0 & 1 & 0
\end{pmatrix}\qquad\text{if}\,\left(\alpha_{n},\beta_{n}\right)\in G,\label{eq: Transition mat 1}
\end{equation}

\begin{equation}
A_{n}=\begin{pmatrix}a_{n} & 0 & 1\\
0 & 1 & 0\\
1 & 0 & 0
\end{pmatrix}\qquad\text{if}\ \left(\alpha_{n},\beta_{n}\right)\in B_{-},\label{eq: Transition mat 2}
\end{equation}

\begin{equation}
A_{n}=\begin{pmatrix}a_{n} & 0 & 1\\
a_{n} & 1 & 0\\
1 & 0 & 0
\end{pmatrix}\qquad\text{if}\ \left(\alpha_{n},\beta_{n}\right)\in B_{+}.\label{eq: Transition mat 3}
\end{equation}
In particular, if we denote by $h^{(n)}$ the column vector of heights
towers, i.e. the transpose of $\left(h_{L}^{(n)},h_{M}^{(n)},h_{S}^{(n)}\right)$,
it satisfies the recursive relations
\begin{equation}
h^{(n+1)}=A_{n}h^{(n)},\qquad n\in\mathbb{N\cup\text{\ensuremath{\left\{  0\right\} } .}}\label{eq:recursiveheights}
\end{equation}

\begin{rem}
\label{rem:Vershik map}We remark briefly for the readers familiar
with the \emph{Vershik adic map} and the $S$-adic formalism (even
though it will play no role in the rest of this paper), that the sequence
$(\tau_{n})_{n}$ also allows to represent the map $T_{\alpha_{0}}$
as a \emph{Vershik adic map}. The associated Bratteli diagram is a
non-stationary diagram, whose vertex sets $V_{n}$ are always indexed
by $\{L,M,S\}$ with $(A_{n})_{J_{1},J_{2}}$ edges from $J_{1}$
to $J_{k}$; the ordering of the edges which enter the vertex $J$
at level $n$ is given exactly by the substitution word $\tau_{n}(J)$.
We refer the interested reader to the works by Vershik \cite{vershik1997adic}
and to the  paper by Berthe and Delecroix \cite{berthe2013beyond}
for further information on Vershik maps, Brattelli diagrams and $S$-adic
formalism. 
\end{rem}

\subsubsection{Special Birkhoff sums\label{subsec:Special-Birkhoff-sums}}

Let us consider now the function $\varphi$ defined by (\ref{eq: Cocycle function})
which has a discontinuity at $0$ and at $\beta_{0}$. In order to
study its Birkhoff sums $\varphi_{n}$ (defined in (\ref{eq:defBS}))
, we will use the renormalization algorithm described in the previous
section. Under the assumption that $\left(\alpha,\beta\right)\in\tilde{X}$,
$\varphi$ determines a sequence of functions $\varphi^{\left(n\right)}$,
where $\varphi^{(n)}$ is a real valued function defined on $I^{(n)}$
obtained by \emph{inducing} $\varphi$ on $I^{(n)}$, i.e. by setting
\begin{equation}
\varphi^{(n)}(x)=\sum_{i=0}^{h_{J}^{(n)}-1}\varphi({T_{\alpha}}^{i}(x)),\qquad\mathrm{if\,x}\in I_{J}^{(n)}.\label{eq:special BS}
\end{equation}
The function $\varphi^{(n)}$ is what Marmi-Moussa-Yoccoz in \cite{marmi2005cohomological}
started calling \emph{special Birkhoff sum}s: the value $\varphi^{(n)}(x)$
gives the Birkhoff sum of the function $\varphi$ along the orbit
of $x\in I_{J}^{(n)}$ until its first return to $I^{(n)}$, i.e.
it represents the Birkhoff sum of the function along an orbit which
goes from the bottom to the top of the tower $Z_{J}^{(n)}$.

One can see that since $\varphi^{(0)}:=\varphi$ has mean zero and
a discontinuity with a jump of $1$ at $\beta^{(0)}:=\beta_{0}$,
its special Birkhoff sums $\varphi^{(n)}$, $n\in\mathbb{N}$, again
have mean zero and a discontinuity with a jump of $1$. The points
$\beta^{(n)}$, $n\in\mathbb{N}$, are defined in the renormalization
procedure exactly so that $\varphi^{(n)}$ has a jump of one at $\beta^{(n)}$.
Moreover, the function $\varphi$ is constant on each level of the
towers $Z_{J}^{\left(n\right)}$, $J\in\left\{ L,M,S\right\} $, $n\in\mathbb{N}\cup\left\{ 0\right\} $,
and therefore, it is completely determined by a sequence of vectors
\[
\left(\varphi_{L}^{\left(n\right)},\varphi_{M}^{\left(n\right)},\varphi_{S}^{\left(n\right)}\right),\qquad n\in\mathbb{N}\cup{0},
\]
where $\varphi_{J}^{\left(n\right)}=\varphi^{\left(n\right)}\left(x\right)$,
for any $x\in I_{J}^{\left(n\right)}$. It then follows immediately
from the towers recursive structure (see equation (\ref{eq:towerrecursivestructure}))
that the functions $\varphi^{\left(n\right)}$ also satisfy the following
recursive formulas given by the substitutions in Proposition \ref{prop: Tower Structure}:

\[
\varphi_{J}^{\left(n+1\right)}=\sum_{i=0}^{k}\varphi_{J_{i}}^{\left(n\right)}\qquad\text{if \,\ensuremath{\tau}}_{n}(J)=J_{0}\cdots J_{k}.
\]

We finish this section with a few simple observations on the heights
of the towers and on special Birkhoff sums along these towers that
we will need for the proof of the main result. Let $\left(\alpha_{0},\beta_{0}\right)\in\tilde{X}$
be the parameters associated to a given pair $\left(\alpha,\beta\right)$
via the relations (\ref{eq:Correcpondence between alpha and alpha_zero})
and (\ref{eq: Correcspondence between beta and beta_zero}). Under
the assumption that $\alpha$ is badly approximable, since the heights
of the towers appearing in the renormalization procedure satisfy $\eqref{eq:recursiveheights}$)
and $0\leq b_{n}\leq a_{n}$ are bounded, there exists a constant
$C$ such that

\begin{equation}
C^{-1}n\leq\log h_{J}^{\left(n\right)}\leq Cn\qquad\textrm{for any\,}n\in\mathbb{N}.\label{eq: Heights of towers growth}
\end{equation}
It follows that for any $m\in\bbN$, there exists a constant $M=M\left(m\right)$,
such that if $\left|k-n\right|\leq m$, then 
\begin{equation}
\frac{1}{M}<\frac{h_{J}^{\left(n\right)}}{h_{K}^{\left(k\right)}}<M\qquad\textrm{for any\,}J,K\in\left\{ L,M,S\right\} .\label{eq: Inequality for heights of towers}
\end{equation}
Moreover, by (\ref{eq: DK inequality}), the special Birkhoff $\varphi_{J}^{\left(n\right)}$
are uniformly bounded, i.e. 
\begin{equation}
\sup\left\{ \left|\varphi_{J}^{\left(n\right)}\right|:\ J\in\left\{ L,M,S\right\} ,\ n\in\bbN\right\} <\infty.\label{eq: Denjoy Koksma}
\end{equation}

\subsection{The (adic) symbolic coding\label{subsec: Symbolic Coding}}

The renormalization algorithm and the formalism defined above lead
to the symbolic coding of the dynamics of $T_{\alpha_{0}}$ described
in the present section. This coding is exploited in Section \ref{subsec: Markov chain towers}
to build an array of non-homogeneous Markov chains which models the
dynamics. 
\begin{defn}
(Markov compactum) Let $\left(\mathcal{S}_{n}\right)_{n=1}^{\infty}$
be a sequence of finite sets with $\sup_{i}\left|\mathcal{S}_{i}\right|<\infty$
and let $\left(A^{\left(n\right)}\right)_{n=1}^{\infty}$ be a sequence
of matrices, such that $A^{\left(n\right)}$ is an $\left|\mathcal{S}_{n}\right|\times\left|\mathcal{S}_{n+1}\right|$
matrix whose entries $A_{s,t}^{\left(n\right)}\in\left\{ 0,1\right\} $
for any $(s,t)\in\mathcal{S}_{n}\times\mathcal{S}_{n+1}$. The \emph{Markov
compactum} determined by $A^{\left(n\right)}$ is the space 
\[
X=\left\{ \omega\in\prod_{n=1}^{\infty}\mathcal{S}_{n}:\ A_{s_{n},s_{n+1}}^{(n)}=1\text{ for all }n\in\bbN\right\} .
\]

\begin{figure}[!tbh]

\includegraphics[angle=-90,width=0.8\textwidth]{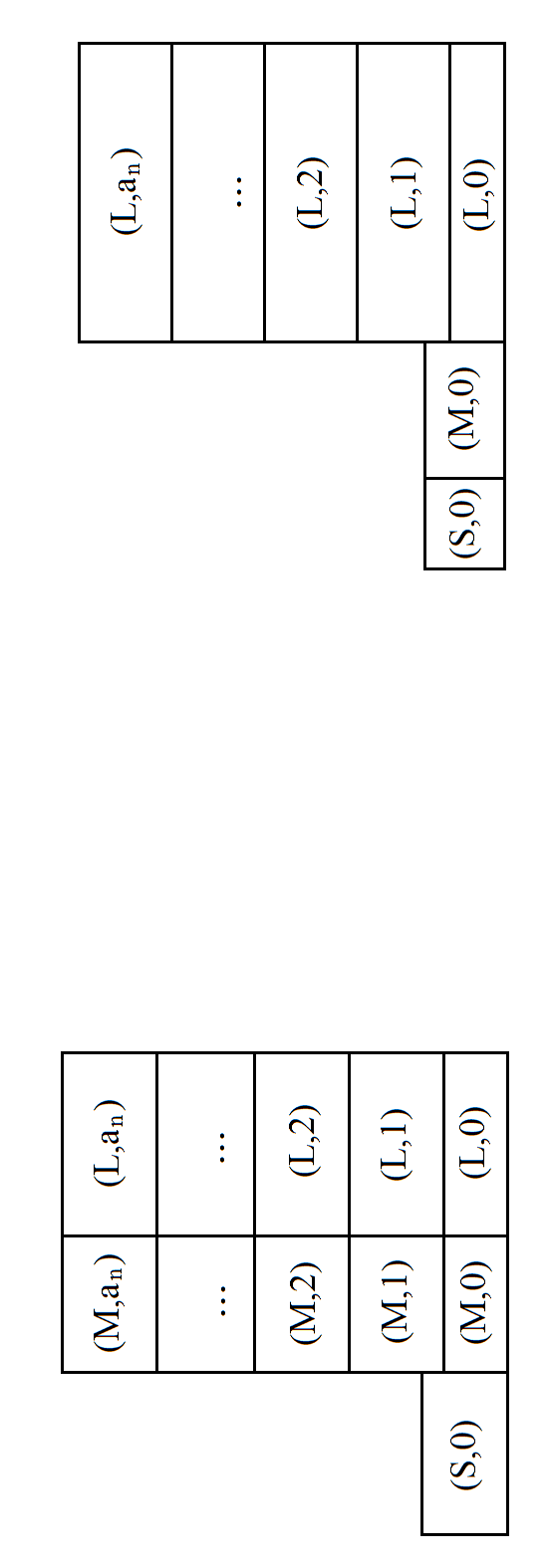}\caption{\label{fig:Labeling-of-the}Labeling of the subtowers of the towers$Z_{J}^{(n)}$
by labels $(J,i$), $J\in\left\{ L,M,S\right\} $. }
\end{figure}

To describe the coding, recall that for each $n\in\mathbb{N}$, each
tower $Z_{J}^{(n)}$, where $J\in\{L,M,S\}$, is obtained by stacking
at most $a_{n}+1$ \emph{subtowers }of the towers $Z_{K}^{(n-1)}$
(the type and order of the subtowers is completely determined by the
word $\tau_{n-1}(J)$ given by the substitution $\tau_{n-1}$ as described
in Proposition \ref{prop: Tower Structure}). We will \emph{label}
these subtowers by $(J,i)$, where the index $i$ satisfies $0\leq i\leq a_{n}$
and indexes the subtowers \emph{from bottom to top}: more formally,
$(J,i)$ is the label of the subtower of $Z_{J}^{(n)}$, with base
$(T^{(n-1)})^{i}(I_{J}^{(n)})$, which is the $(i+1)^{th}$ subtower
from the bottom (see Figure \ref{fig:Labeling-of-the}). Thus, for
a fixed $n$, denoting by $\left|\tau_{n-1}\left(J\right)\right|$
the length of the word $\tau_{n-1}\left(J\right)$, the labels of
the subtowers belong to
\[
\left\{ \left(J,i\right):\quad J\in\left\{ L,M,S\right\} ,\ i=0,...,\left|\tau_{n-1}\left(J\right)\right|-1\right\} .
\]
When $\alpha=\frac{\alpha_{0}}{1+\alpha_{0}}=\left[a_{0},a_{1},\dots,a_{n},\dots\right]$
is the continued badly approximable, let $a_{max}$ be the largest
of its continued fraction entries and consider the alphabet 
\[
E=E(a_{max})=\{L,M,S\}\times\{0,\cdots,a_{max}\}.
\]
\end{defn}
\begin{rem}
It is not necessary for $\alpha$ to be badly approximable in order
for the construction of the present section and the next section to
be valid. If $\alpha$ is not badly approximable, define $E=\left\{ L,M,S\right\} \times\left\{ 0,1,...,n,...\right\} $
. This definition would make all statements of this and the following
sections valid, without any further changes. 
\end{rem}
\begin{defn}
Given $x\in\left[-1,\alpha_{0}\right)$, for each $n\in\mathbb{N}$,
$x$ is contained in a unique tower $Z_{J_{n}(x)}^{(n)}$ for some
$J_{n}\left(x\right)\in\{L,M,S\}$, and furthermore in a unique subtower
of stage $n-1$ inside it, labeled by $\left(J_{n}\left(x\right),j_{n}\left(x\right)\right)$
where $0\leq j_{n}\left(x\right)\leq a_{n}$. Let $\Psi:\left[-1,\alpha_{0}\right)\rightarrow E$
be the coding map defined by 
\[
\Psi(x):=\left(J_{n}\left(x\right),j_{n}\left(x\right)\right)\in E^{\mathbb{N}}.
\]
\end{defn}
Let us recall that for word $\omega$ in the alphabet $E$ let us
denote by $\omega_{i}$ the letter in the word which is labeled by
$0\leq i<$$\left|\omega\right|$ . 
\begin{prop}
\label{prop: Coding}The image of $\Psi$ is contained in the subspace
$\Sigma\subset E^{\mathbb{N}}$ defined by 
\[
\Sigma:=\left\{ \left(\left(J_{1},j_{1}\right),...,\left(J_{n},j_{n}\right),...\right)\in E^{\bbN}:\ \left(\tau_{i}\left(J_{i+1}\right)\right)_{j_{i+1}}=J_{i},\ i=1,2,...\right\} .
\]
The preimage under $\Psi$ of any cylinder $\left[\left(J_{1},j_{1}\right),...,\left(J_{n},j_{n}\right)\right]:=\left\{ \omega\in\Sigma:\ \omega_{i}=\left(J_{i},j_{i}\right),\ i=1,...,n\right\} $
satisfying the constraints $\left(\tau_{i}\left(J_{i+1}\right)\right)_{j_{i+1}}=J_{i},\ i=1,2,...,n-1$
is the set of all points on some level of the tower $Z_{J_{n}}^{\left(n\right)}$,
i.e. there exists $0\leq i<h_{J_{n}}^{\left(n\right)}$ such that
\[
\Psi^{-1}\left(\left[\left(J_{1},j_{1}\right),...,\left(J_{n},j_{n}\right)\right]\right)=T_{\alpha_{0}}^{i}\left(I_{J_{n}}^{\left(n\right)}\right).
\]
Moreover, $\Psi$ is a Borel isomorphism between $\left[-1,\alpha_{0}\right)$
and its image, where the Borel structure on the image of $\Psi$ is
inherited from the natural Borel structure on $E^{\bbN}$ arising
from the product topology on $E^{\bbN}$. 
\end{prop}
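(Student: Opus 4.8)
The plan is to verify the three assertions in turn, all of which follow from unwinding the definitions of the coding map $\Psi$ and the tower structure described in Proposition \ref{prop: Tower Structure}. First, to see that the image of $\Psi$ lies in $\Sigma$, fix $x \in [-1,\alpha_0)$ and write $\Psi(x) = ((J_1,j_1),(J_2,j_2),\dots)$. By definition, for each $n$ the point $x$ lies in the tower $Z_{J_n}^{(n)}$, and within that tower it belongs to the $(j_{n+1}+1)^{\text{th}}$ subtower from the bottom when we decompose $Z_{J_{n+1}}^{(n+1)}$ into stage-$n$ subtowers according to $\tau_n(J_{n+1})$. But the identity of that subtower — which stage-$n$ tower it is a subtower of — is exactly the letter $\left(\tau_n(J_{n+1})\right)_{j_{n+1}}$ by the definition \eqref{eq:towerrecursivestructure} of the substitution. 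Since $x$ lies in the subtower of $Z_{\left(\tau_n(J_{n+1})\right)_{j_{n+1}}}^{(n)}$ and also lies in $Z_{J_n}^{(n)}$, and the stage-$n$ towers are disjoint, we conclude $J_n = \left(\tau_n(J_{n+1})\right)_{j_{n+1}}$, which is precisely the constraint defining $\Sigma$.

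Next I would establish the cylinder formula by induction on $n$. For $n=1$, the preimage $\Psi^{-1}([(J_1,j_1)])$ is, by definition of the labels, the set of points in the $(j_1+1)^{\text{th}}$ subtower of $Z_{J_1}^{(1)}$ counted from the bottom; by the labeling convention that subtower has base $(T^{(0)})^{j_1}(I_{J_1}^{(1)}) = T_{\alpha_0}^{i}(I_{J_1}^{(1)})$ for the appropriate $i = \sum_{\ell<j_1} h^{(0)}_{\tau_0(J_1)_\ell}$, which is a single level of $Z_{J_1}^{(1)}$ — here one uses that the stage-$0$ towers $Z_J^{(0)}$ have height one (each level of the base partition at stage $0$ is just an interval, since $I^{(0)}$ itself is the base), so that a ``subtower of stage $0$'' inside $Z_{J_1}^{(1)}$ is literally one $T_{\alpha_0}$-level. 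For the inductive step, given the formula at stage $n$, intersecting with the constraint at stage $n+1$ restricts $T_{\alpha_0}^i(I_{J_n}^{(n)})$ to the corresponding level inside the relevant stage-$n$ subtower of $Z_{J_{n+1}}^{(n+1)}$, which by \eqref{eq:towerrecursivestructure} is again a single $T_{\alpha_0}$-level $T_{\alpha_0}^{i'}(I_{J_{n+1}}^{(n+1)})$ of the stage-$(n+1)$ tower; tracking the shift in the exponent via the partial sums of heights $h_{\tau_n(J_{n+1})_\ell}^{(n)}$ gives the claimed $i'$. The key structural input throughout is that each $I_J^{(n+1)}$ is contained in exactly one $I_K^{(n)}$ and that the first-return tower over it decomposes cleanly into stage-$n$ subtowers, which is the content of Proposition \ref{prop: Tower Structure}.

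Finally, for the Borel isomorphism statement: the cylinder formula shows that $\Psi$ maps the partition of $[-1,\alpha_0)$ into levels of stage-$n$ towers bijectively onto the trace on $\Psi([-1,\alpha_0))$ of the stage-$n$ cylinders of $\Sigma$. By Remark \ref{rem: Towers generate the sigma-field}, these level partitions separate points and generate the Borel $\sigma$-algebra of $[-1,\alpha_0)$ (this uses irrationality of $\alpha$); on the symbolic side the cylinders generate the product Borel $\sigma$-algebra on $E^{\bbN}$. Hence $\Psi$ is injective (two distinct points are eventually separated by some level partition, hence have distinct codes), measurable (preimages of cylinders are in $\mathcal{B}$, as just shown), and has measurable inverse on its image (preimages under $\Psi^{-1}$ of the generating level partitions are cylinder traces). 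By the standard fact that a measurable bijection between standard Borel spaces is a Borel isomorphism, $\Psi$ is a Borel isomorphism onto its image.

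The step I expect to require the most care is the bookkeeping in the inductive cylinder formula — precisely matching the exponent $i$ at stage $n$ with the exponent $i'$ at stage $n+1$ in terms of the partial sums $\sum_{\ell < j_{n+1}} h^{(n)}_{\tau_n(J_{n+1})_\ell}$, and checking the base cases at $n=0,1$ where the degenerate ``height one'' convention for stage-$0$ towers has to be handled explicitly. None of this is deep, but it is the place where an off-by-one error or a misaligned indexing convention would break the argument, so I would write it out with the labeling convention of Figure \ref{fig:Labeling-of-the} kept in front of me.
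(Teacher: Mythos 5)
Your proposal is correct and follows essentially the same route as the paper: the first part is the identical disjointness-of-towers argument, the second part is the same refinement-by-heights induction (you induct upward on the cylinder length while the paper peels constraints from the top index down to $1$, ending at $h^{(0)}_K=1$, but the bookkeeping is the same), and the third part invokes the same fact that the tower-level partitions separate points and generate the Borel $\sigma$-algebra. No gaps.
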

Let 
\begin{equation}
\mathcal{S}_{n}:=\left\{ \left(J,j\right)\in E:\ \exists\omega\in\Sigma\ s.t.\ \omega_{n}=\left(K,k\right)\right\} \label{eq: State space of MC}
\end{equation}
be the set of symbols which appear as $n^{th}$ coordinate in some
admissible word in $\Sigma$, and note that the definition of $\Sigma$
shows that $\Sigma$ is a Markov compactum with state space $\prod_{i=1}^{\infty}\mathcal{S}_{i}$,
given by a sequence of matrices $\left(A^{(n)}\right)^{T}$ indexed
by $\mathcal{S}_{n}\times\mathcal{S}_{n+1}$ such that $A_{\left(K,k\right),\left(J,j\right)}^{\left(n\right)}=1$
if and only if $\left(\tau_{n}\left(J\right)\right)_{j}=K$. Although
we do not need it in what follows, one can explicitly describe the
image $\Sigma'\subset\Sigma$ of the coding map $\Psi$ and show that
it is obtained from $\Sigma$ by removing countably many sequences.
We remarked in Remark \ref{rem:Vershik map} that $T_{\alpha}$ is
conjugated to a the Vershik adic map. Let us add that the map $\Psi$
provides the measure theoretical conjugacy.
\begin{proof}[Proof of Proposition \ref{prop: Coding}]
First we prove that the image of $\Psi$ is contained in $\Sigma$.
To see this, note that for $x\in\left[-1,\alpha_{0}\right)$, $(J_{n}\left(x\right),j_{n}\left(x\right))=(K,k)$
means that $x$ belongs to $Z_{K}^{(n)}$ (since $J_{n}\left(x\right)=K$
) and $(J_{n+1}\left(x\right),j_{n+1}\left(x\right))=(J,j)$ means
that $x$ belongs to the $j^{th}$ subtower of $Z_{J}^{(n+1)}$. Hence
the $j^{th}$ subtower of $Z_{J}^{(n+1)}$ must be contained in $Z_{K}^{(n)}$
. Recalling the definition of the substitutions $(\tau_{n})_{n},$
this implies exactly the relation $\left(\tau_{n}\left(J\right)\right)_{j}=K$,
which in turn implies that $\Psi\left(x\right)\in\Sigma$. 

To prove the second statement, namely that cylinders correspond to
floors of towers, note that according to our labeling of the towers,
the set of all $x$ such that $\left(J_{n}\left(x\right),j_{n}\left(x\right)\right)=\left(J_{n},j_{n}\right)$
consists exactly of all points which belong to $h_{K}^{\left(n-1\right)}$
levels of the tower $Z_{J_{n}}^{\left(n\right)}$, where $K=\left(\tau_{n-1}\left(J_{n}\right)\right)_{j_{n}}$.
Proceeding by induction, one sees that for any $k=n-1,\dots,1$, the
set $\left\{ x:\ \left(J_{i}\left(x\right),j_{i}\left(x\right)\right)=\left(J_{i},j_{i}\right),\ i=k,...,n\right\} $
is the set of all points contained in precisely $h_{K}^{\left(k-1\right)}$
levels of the tower $Z_{J_{n}}^{\left(n\right)}$, where $K=\left(\tau_{k-1}\left(J_{k}\right)\right)_{j_{k}}$.
Thus, since $h_{K}^{(0)}=1$ for any $K\in\left\{ L,M,S\right\} $,
$\Psi^{-1}\left(\left[\left(J_{1},j_{1}\right),...,\left(J_{n},j_{n}\right)\right]\right)$
is the set of all points on a \emph{single} level of the tower $Z_{J_{n}}^{\left(n\right)}$.
This argument shows that the levels of the towers $Z_{J}^{\left(n\right)}$,
$J\in\left\{ L,M,S\right\} ,$ are in bijective correspondence under
the map $\Psi$ with cylinders of length $n$ in $\Sigma$. 

Finally, injectivity and bi-measurability of $\Psi$ follow since
the sequence of partitions induced by the tower structure generate
the Borel sets of the space $\left[-1,\alpha_{0}\right)$ and separates
points (see Remark \ref{rem: Towers generate the sigma-field}). 
\end{proof}

\subsection{The Markov chain modeling towers. \label{subsec: Markov chain towers}}

In what follows, we denote by $\mu$ be the push forward by the map
$\Psi$ of the normalized Lebesgue measure $\lambda$ on $\left[-1,\alpha_{0}\right)$,
i.e. the measure given by
\begin{equation}
\mu:=\lambda\circ\Psi^{-1}.\label{eq:def mu}
\end{equation}
Moreover, for $J\in\left\{ L,M,S\right\} $ , let us define also the
conditional measures
\begin{equation}
\mu_{n}^{J}\left(\left[\left(J_{1},j_{1}\right),...,\left(J_{n},j_{n}\right)\right]\right):=\mu\left(\left[\left(J_{1},j_{1}\right),...,\left(J_{n},j_{n}\right)\right]\left|J_{n}=J\right.\right).\label{eq:def mu J}
\end{equation}
We denote by $\Sigma_{n}$ and $E_{n}$ correspondingly, the restriction
of $\Sigma$ and $E$ to the first $n$ coordinates and we endow these
sets with the $\sigma$-algebras inherited from the Borel $\sigma$-algebra
on $E^{\bbN}$. Let $\mathcal{S}_{n}$ be the set of states appearing
in the $n^{th}$ coordinate of $\Sigma$, defined by (\ref{eq: State space of MC}). 

We define a sequence of transition probabilities, or equivalently
in this discrete case, stochastic matrices $p_{\left(J,j\right),\left(K,k\right)}^{\left(n\right)}$,
where $\left(J,j\right)\in\mathcal{S}_{n+1}$ and $\left(K,k\right)\in\mathcal{S}_{n}$,
and a sequence of probability distributions $\pi_{n}$ on $\mathcal{S}_{n}$
which are used to define a sequence of Markovian measures on $\Sigma_{n}$
that model the dynamical renormalization procedure. We refer to the
sequence $p^{\left(n\right)}$ as the sequence of\textit{ transition
matrices associated to the pair} $\left(\alpha_{0},\beta_{0}\right)\in\tilde{X}$. 
\begin{defn}
\label{def: Markov transitions}For any $n\in\bbN$ and $J,\,K\in\left\{ L,M,S\right\} $,
if 
\[
\tau_{n}\left(J\right)=J_{0}\dots J_{l}\text{\,\ and\,\ }\tau_{n-1}\left(K\right)=K_{0}\dots K_{m},
\]
we define 
\end{defn}
\[
p_{\left(J,j\right),\left(K,k\right)}^{\left(n\right)}:=\begin{cases}
\frac{h_{K_{k}}^{\left(n-1\right)}}{h_{K}^{\left(n\right)}} & \text{if}\ K=J_{j}\ \mathrm{and}\ 0\leq j\leq l,\ k\leq m,\\
0 & \text{otherwise; }
\end{cases}
\]
\[
\pi_{n}\left(K,k\right):=\begin{cases}
\lambda\left(Z_{K}^{\left(n\right)}\right)\cdot\frac{h_{K_{k}}^{\left(n-1\right)}}{h_{K}^{\left(n\right)}} & \text{if}\ 0\leq k\leq m,\\
0 & \text{otherwise.}
\end{cases}
\]
Moreover, for any $\left(L,l\right)\in E$, we set 

\[
\pi_{n}^{K}\left(L,l\right):=\begin{cases}
\frac{h_{K_{l}}^{\left(n-1\right)}}{h_{K}^{\left(n\right)}} & \text{if}\ 0\leq l\leq m\ \mathrm{and}\ L=K,\\
0 & \text{otherwise.}
\end{cases}.
\]

\begin{rem}
The rationale behind the definition of $\pi_{n}$ is that $\pi_{n}\left(K,k\right)$
is defined to be the $\lambda$- measure of the piece of the tower
$Z_{K}^{(n)}$ labeled by $(K,k)$; similarly $p_{\left(J,j\right),\left(K,k\right)}^{\left(n\right)}$
is non zero exactly when the $k^{th}$ subtower inside $Z_{J}^{(n)}$
is contained in $Z_{K}^{(n-1)}$, in which case it gives the proportion
of this $k^{th}$ subtower which is contained in the subtower of $Z_{K}^{(n-1)}$
labeled by $(J,j)$.
\end{rem}
The following Proposition identifies the measures $\mu_{n}$ and $\mu_{n}^{J}$
as Markovian measures on $\Sigma_{n}$ generated by the transition
matrices and initial distributions indicated in the previous definition.
\begin{prop}
\label{prop: Markovian measure}For every $n\in\bbN$, $J\in\left\{ L,M,S\right\} $
and every word $\left(\left(J_{1},j_{1}\right),...,\left(J_{n},j_{n}\right)\right)\in\Sigma_{n}$
we have
\begin{equation}
\mu\left(\left[\left(J_{1},j_{1}\right),...,\left(J_{n},j_{n}\right)\right]\right)=\pi_{n}\left(J_{n},j_{n}\right)\prod_{i=1}^{n-1}p_{\left(J_{i+1},j_{i+1}\right),\left(J_{i},j_{i}\right)}^{\left(i\right)}\label{eq: Correspondence between Markov meas and Leb}
\end{equation}
and 
\begin{equation}
\mu\left(\left[\left(J_{1},j_{1}\right),...,\left(J_{n},j_{n}\right)\right]\left|J_{n}=J\right.\right)=\pi_{n}^{J}\left(J_{n},j_{n}\right)\prod_{i=1}^{n-1}p_{\left(J_{i+1},j_{i+1}\right),\left(J_{i},j_{i}\right)}^{\left(i\right)}.\label{eq: Correspondence between Markov meas and Leb 2}
\end{equation}
\end{prop}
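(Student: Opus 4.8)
The plan is to prove \eqref{eq: Correspondence between Markov meas and Leb} directly, by induction on $n$, using the description of cylinders as floors of towers from Proposition \ref{prop: Coding} together with the recursive tower structure \eqref{eq:towerrecursivestructure}; then \eqref{eq: Correspondence between Markov meas and Leb 2} will follow by normalizing. The key observation, which I would state first, is that by Proposition \ref{prop: Coding} the set $\Psi^{-1}\left(\left[\left(J_{1},j_{1}\right),\dots,\left(J_{n},j_{n}\right)\right]\right)$ is a \emph{single} floor $T_{\alpha_{0}}^{i}\left(I_{J_{n}}^{(n)}\right)$ of the tower $Z_{J_{n}}^{(n)}$, so that its $\mu$-measure equals $\lambda\bigl(I_{J_{n}}^{(n)}\bigr)$, the measure of the base of that tower, since $T_{\alpha_{0}}$ preserves $\lambda$. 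Thus the whole proposition reduces to the purely combinatorial/geometric identity
\[
\lambda\bigl(I_{J_{n}}^{(n)}\bigr)=\pi_{n}\left(J_{n},j_{n}\right)\prod_{i=1}^{n-1}p_{\left(J_{i+1},j_{i+1}\right),\left(J_{i},j_{i}\right)}^{\left(i\right)}
\]
valid for admissible words, where I must keep track that the constraint $\left(\tau_{i}(J_{i+1})\right)_{j_{i+1}}=J_{i}$ holds along the word.

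\textbf{The inductive step.} For the base case $n=1$: the cylinder $\left[(J_{1},j_{1})\right]$ corresponds to the $j_1$-th floor-block of $Z_{J_1}^{(1)}$, which is a subtower of $Z_{K}^{(0)}$ with $K=\left(\tau_0(J_1)\right)_{j_1}$, and since $h_K^{(0)}=1$ this floor has $\lambda$-measure $\lambda\bigl(I_{J_1}^{(1)}\bigr)\cdot\frac{h_{K_{j_1}}^{(0)}}{h_{J_1}^{(1)}}=\pi_1(J_1,j_1)$ by the definition of $\pi_1$ (with the product over $i=1$ to $0$ being empty); one checks the constraint makes the ``otherwise'' branch irrelevant. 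For the inductive step, suppose the formula holds for words of length $n-1$. Given an admissible word of length $n$, I would use the refinement argument from the proof of Proposition \ref{prop: Coding}: the floor $\Psi^{-1}\left(\left[(J_1,j_1),\dots,(J_n,j_n)\right]\right)$ sits inside the floor $\Psi^{-1}\left(\left[(J_2,j_2),\dots,(J_n,j_n)\right]\right)$ (this is the shift by one index), and the ratio of their $\lambda$-measures is exactly the proportion of the $j_2$-th subtower of $Z_{J_2}^{(2)}$-block lying in the $(J_1,j_1)$-labeled subtower — which by construction of the labeling is $\frac{h_{(J_1)}^{(1)}}{h_{J_2}^{(2)}}$ when $J_1=\left(\tau_1(J_2)\right)_{j_2}$, i.e. precisely $p^{(1)}_{(J_2,j_2),(J_1,j_1)}$. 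Feeding in the inductive hypothesis for $\left[(J_2,j_2),\dots,(J_n,j_n)\right]$, whose $\mu$-measure is $\pi_{n}\left(J_n,j_n\right)\prod_{i=2}^{n-1}p^{(i)}_{(J_{i+1},j_{i+1}),(J_i,j_i)}$ — here I would note that the shifted word uses the substitutions $\tau_1,\dots,\tau_{n-1}$ indexed the same way, so the product indices line up — gives exactly the claimed product once multiplied by $p^{(1)}_{(J_2,j_2),(J_1,j_1)}$.

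\textbf{Deriving the conditional formula.} Once \eqref{eq: Correspondence between Markov meas and Leb} is established, \eqref{eq: Correspondence between Markov meas and Leb 2} follows by dividing by $\mu(J_n=J)=\lambda\bigl(Z_J^{(n)}\bigr)$ and observing that $\pi_n(J,j)/\lambda\bigl(Z_J^{(n)}\bigr)=h_{K_j}^{(n-1)}/h_J^{(n)}=\pi_n^J(J,j)$ directly from the definitions, while the transition-matrix product is unaffected. I expect the \textbf{main obstacle} to be purely bookkeeping: carefully matching the index on each substitution $\tau_i$ with the correct pair of coordinates $(J_{i+1},j_{i+1})$ and $(J_i,j_i)$ in the telescoping product — in particular making sure that when one peels off the first coordinate in the induction, the remaining product over $i=2,\dots,n-1$ of $p^{(i)}$'s in the shifted word genuinely coincides with the corresponding tail of the original product (the substitution $\tau_i$ that governs the transition from level-$i$ to level-$(i+1)$ subtowers is the same regardless of which coordinate we call ``first''). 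A secondary technical point is verifying that the ``otherwise'' (zero) cases of $p^{(n)}$ and $\pi_n$ never cause trouble, which is handled by the admissibility constraints $\left(\tau_i(J_{i+1})\right)_{j_{i+1}}=J_i$ that define $\Sigma_n$ and by the bounds $0\le j_{i+1}\le\lvert\tau_i(J_{i+1})\rvert-1$ built into membership in $E^{\mathbb N}$.
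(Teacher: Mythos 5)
Your proposal is correct and follows essentially the same route as the paper: both rest on Proposition \ref{prop: Coding}, which identifies each admissible cylinder with a single floor of $Z_{J_{n}}^{\left(n\right)}$ so that $\mu\left(\left[\cdots\right]\right)=\lambda\left(I_{J_{n}}^{\left(n\right)}\right)=\lambda\left(Z_{J_{n}}^{\left(n\right)}\right)/h_{J_{n}}^{\left(n\right)}$, and on the telescoping of the height ratios in $\pi_{n}\prod_{i}p^{\left(i\right)}$ (the paper simply telescopes the product in one line instead of inducting on the number of fixed coordinates). One small slip worth fixing: the ratio obtained by adding the constraint at the first coordinate is $h_{\left(\tau_{0}\left(J_{1}\right)\right)_{j_{1}}}^{\left(0\right)}/h_{J_{1}}^{\left(1\right)}=1/h_{J_{1}}^{\left(1\right)}$, not $h_{J_{1}}^{\left(1\right)}/h_{J_{2}}^{\left(2\right)}$, although your identification of that ratio with $p_{\left(J_{2},j_{2}\right),\left(J_{1},j_{1}\right)}^{\left(1\right)}$ is still correct under Definition \ref{def: Markov transitions}.
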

\begin{proof}
By Proposition \ref{prop: Coding}, $\Psi^{-1}\left(\left[\left(J_{1},j_{1}\right),...,\left(J_{n},j_{n}\right)\right]\right)$
is non empty if and only if the sequence $\left(J_{1},j_{1}\right),...,\left(J_{n},j_{n}\right)$
satisfies the conditions $\left(\tau_{i}\left(J_{i+1}\right)\right)_{j_{i+1}}=J_{i}$
for $i=1,...,n-1$, in which case it consists of the the set of all
points on a certain level of the tower $Z_{J_{n}}^{\left(n\right)}$,
i.e. 
\[
\Psi^{-1}\left(\left[\left(J_{1},j_{1}\right),...,\left(J_{n},j_{n}\right)\right]\right)=T_{\alpha_{0}}^{1}\left(I_{J_{m}}^{(n)}\right)\quad\text{for some }0\leq l\leq h_{J_{n}}^{\left(n\right)}.
\]
It follows from the definition (\ref{eq:def mu}) of the measure $\mu$
that
\[
\mu\left(\left[\left(J_{1},j_{1}\right),...,\left(J_{n},j_{n}\right)\right]\right)=\lambda\left(I_{J_{n}}^{\left(n\right)}\right)=\frac{\lambda\left(Z_{J_{n}}^{\left(n\right)}\right)}{h_{J_{n}}^{\left(n\right)}}.
\]
Moreover, we get that the conditional measures $\mu_{n}^{J}$ given
by (\ref{eq:def mu J}) satisfy
\[
\mu_{n}^{J}\left(\left[\left(J_{1},j_{1}\right),...,\left(J_{n},j_{n}\right)\right]\right)=\mu\left(\left[\left(J_{1},j_{1}\right),...,\left(J_{n},j_{n}\right)\right]\left|J_{n}=J\right.\right)=\frac{1}{h_{J_{n}}^{\left(n\right)}}.
\]
Equations (\ref{eq: Correspondence between Markov meas and Leb})
now follow by definition of $\pi_{n}$ and $p^{\left(n\right)},$
which give that
\[
p_{\left(J_{i+1},j_{i+1}\right),\left(J_{i},j_{i}\right)}^{\left(i\right)}=\frac{h_{\tau_{i-1}(J_{i})_{j_{i}}}^{\left(i-1\right)}}{h_{\tau_{i}(J_{i+1})_{j_{i+1}}}^{\left(i\right)}},\qquad\pi_{n}\left(J_{n},j_{n}\right)=\lambda\left(Z_{J_{n}}^{\left(n\right)}\right)\cdot\frac{h_{\tau_{n-1}(J_{n})_{j_{n}}}^{\left(n-1\right)}}{h_{J_{n}}^{\left(n\right)}}.
\]
Hence, by the conditions $\left(\tau_{i}\left(J_{i+1}\right)\right)_{j_{i+1}}=J_{i}$
for $i=1,...,n-1$ and recalling that $J_{n}=J$ and $h_{K}^{(0)}=1$
for any $K\in\left\{ L,M,S\right\} $, we have that

\[
\pi_{n}\left(J_{n},j_{n}\right)\prod_{i=1}^{n-1}p_{\left(J_{i+1},j_{i+1}\right),\left(J_{i},j_{i}\right)}^{\left(i\right)}=\lambda\left(Z_{J_{n}}^{\left(n\right)}\right)\cdot\frac{h_{J_{n-1}}^{\left(n-1\right)}}{h_{J_{n}}^{\left(n\right)}}\left(\prod_{i=2}^{n-1}\frac{h_{J_{i-1}}^{\left(i-1\right)}}{h_{J_{i}}^{\left(i\right)}}\right)\frac{1}{h_{J_{1}}^{\left(1\right)}}=\lambda\left(Z_{J}^{\left(n\right)}\right).
\]
Equations (\ref{eq: Correspondence between Markov meas and Leb 2})
follows in the same way by using the definition of $\pi_{n}^{J}$
instead than $\pi_{n}$ .

Finally, if the sequence $\left(J_{1},j_{1}\right),...,\left(J_{n},j_{n}\right)$
does not satisfy the conditions $\left(\tau_{i}\left(J_{i+1}\right)\right)_{j_{i+1}}=J_{i}$
for $i=1,...,n-1$,$\Psi^{-1}\left(\left[\left(J_{1},j_{1}\right),...,\left(J_{n},j_{n}\right)\right]\right)=\emptyset$
(by Proposition \ref{prop: Coding}, as recalled above) and by definition
of $p^{\left(n\right)}$ and $\pi_{n}$, $\pi_{n}^{J}$, we get that
the right hand sides in (\ref{eq:Correcpondence between alpha and alpha_zero})
and (\ref{eq:Correspondence between sums on Markov chain and Birkhoff sums})
are both zero, so equations (\ref{eq:Correcpondence between alpha and alpha_zero})
and (\ref{eq:Correspondence between sums on Markov chain and Birkhoff sums})
hold in this case too. This completes the proof. 
\end{proof}
For $\omega\in\Sigma$, $n\in\bbN$, we define the \emph{coordinate
random variables }
\begin{equation}
X_{n}\left(\omega\right):=\omega_{n}\label{eq: Constructed Random variables}
\end{equation}
Since, all cylinders of the form $\left[\left(J_{1},j_{1}\right),...,\left(J_{n},j_{n}\right)\right]$,
with $\left(\left(J_{1},j_{1}\right),...,\left(J_{n},j_{n}\right)\right)\in\Sigma_{n}$
generate the $\sigma$-algebra of $\Sigma_{n}$, it immediately follows
from Proposition \ref{prop: Markovian measure} that for every $n\in\bbN$,
$X_{n},...,X_{1}$ form a Markov chain on $\Sigma_{n}$ with respect
to the measures $\mu$, $\mu_{n}^{J}$ with transition probabilities
$p^{\left(i\right)}$, $i=1,...,n-1$ and initial distributions $\pi_{n}$
, $\pi_{n}^{J}$, respectively. 

\subsection{The functions over the Markov chain modeling the Birkhoff sums. \label{subsec: Markov chain-1}}

Let us now define a sequence of functions $\xi_{n}:\,\mathcal{S}_{n}\rightarrow\bbR$
that enables us to model the distribution of Birkhoff sums. In section
\ref{subsec:Special-Birkhoff-sums} we introduced the notion of special
Birkhoff sums of $\varphi$, i.e. Birkhoff sums of $\varphi$ along
the orbit of a point $x$ in the base of a renormalization tower $Z_{J}^{(n)}$
up to the height of the tower, see (\ref{eq:special BS}). We will
consider in this section \emph{intermediate Birkhoff sums along a
tower} (of for short, \emph{intermediate Birkhoff sums}), namely Birkhoff
sums of a point at the base of a tower $Z_{J}^{\left(n\right)}$ up
to an intermediate height, i.e. sums of the form 
\[
\sum_{k=0}^{j}\varphi\left(T_{\alpha_{0}}^{k}\left(x\right)\right),\qquad\text{where}\quad x\in I_{J}^{(n)},\ 0\leq j<h_{J}^{(n)},\qquad n\in\mathbb{N}\cup\left\{ 0\right\} ,\ J\in\left\{ L,M,S\right\} .
\]

The crucial Proposition (\ref{prop: Connection between temporal sums and MC})
shows that intermediate Birkhoff sums can be expressed as sums of
the following functions $\left\{ \xi_{n}\right\} $ over the Markov
chain $\left(X_{n}\right)$. 
\begin{defn}
For $n\in\bbN$, $\left(J,j\right)\in \mathcal{S}_{n}$ such that $\tau_{n-1}\left(J\right)=J_{0}\dots J_{l}$
(note that this forces $0\leq j\leq l=\left|\tau_{n-1}\left(J\right)\right|$),
if $n\geq2$ set \label{def: xi functions}
\[
\xi_{n}\left(\left(J,j\right)\right)=\sum_{i=0}^{j-1}\varphi_{J_{i}}^{\left(n-1\right)}
\]
where, by convention, a sum with $i$ that runs from $0$ to $-1$
is equal to zero. If $n=1$, set 
\[
\xi_{1}\left(\left(J,j\right)\right)=\sum_{i=0}^{j}\varphi_{J_{i}}^{\left(0\right)}.
\]
We then have the following proposition.
\end{defn}
\begin{prop}
\label{prop: Connection between temporal sums and MC}Let $J\in\left\{ L,M,S\right\} $
and let ,$x_{J}\in I_{J}^{\left(n\right)}$. Then for any $A\in\mathcal{B}\left(\bbR\right)$,
\[
\frac{1}{h_{J}^{\left(n\right)}}\#\left\{ 0\leq j\leq h_{J}^{\left(n\right)}-1:\ \sum_{k=0}^{j}\varphi\left(T_{\alpha_{0}}^{k}\left(x_{J}\right)\right)\in A\right\} =\mu_{n}^{J}\left(\sum_{k=1}^{n}\xi_{k}\left(X_{k}\right)\in A\right).
\]
\end{prop}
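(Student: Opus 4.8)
The plan is to exhibit an explicit bijection between the set of integers $\{0,1,\dots,h_J^{(n)}-1\}$, labelling the levels of the tower $Z_J^{(n)}$, and the set of admissible cylinders of length $n$ in $\Sigma$ ending at the symbol $J$ in the $n$th coordinate, and then to check that under this bijection the intermediate Birkhoff sum $\sum_{k=0}^{j}\varphi(T_{\alpha_0}^k(x_J))$ matches the value $\sum_{k=1}^{n}\xi_k(X_k)$ of the function over the Markov chain. Once such a measure-respecting identification is in hand, both sides of the claimed identity compute the same thing: the left side is the normalized counting measure of the level index $j$, and since by Proposition \ref{prop: Markovian measure} the conditional measure $\mu_n^J$ assigns mass $1/h_J^{(n)}$ to each length-$n$ cylinder ending in $J$ (independently of $x_J\in I_J^{(n)}$, since the count only sees the level, not the horizontal position), the right side is also the normalized count of those cylinders on which $\sum_{k=1}^n\xi_k(X_k)\in A$.

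The main computation is the level-to-cylinder correspondence and the additivity of the Birkhoff sum along it. Fix $x_J\in I_J^{(n)}$ and $0\le j<h_J^{(n)}$; the point $y:=T_{\alpha_0}^{j}(x_J)$ lies in some level of $Z_J^{(n)}$, hence by Proposition \ref{prop: Coding} in a unique length-$n$ cylinder $[(J_1,j_1),\dots,(J_n,j_n)]$ with $J_n=J$. I would unwind the tower structure recursively: the tower $Z_J^{(n)}$ is built by stacking subtowers of the $(n-1)$st stage in the order prescribed by $\tau_{n-1}(J)=J_0\cdots J_l$, and the label $j_n$ records which of these subtowers contains $y$. Passing from $x_J$ up by $j$ levels, one first traverses completely the subtowers indexed $0,1,\dots,j_n-1$ — contributing heights $h_{J_0}^{(n-1)},\dots,h_{J_{j_n-1}}^{(n-1)}$ and, since $\varphi$ is constant on tower levels, Birkhoff-sum increments $\varphi_{J_0}^{(n-1)},\dots,\varphi_{J_{j_n-1}}^{(n-1)}$ (using the recursive relation $\varphi_J^{(n)}=\sum_i \varphi_{J_i}^{(n-1)}$ from Section \ref{subsec:Special-Birkhoff-sums}) — and then sits at some intermediate level inside the $j_n$th subtower, which is a copy of a stage-$(n-1)$ tower $Z_{J_{j_n}}^{(n-1)}$ with $J_{j_n}=J_{n-1}$. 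The partial contribution from the fully traversed subtowers is exactly $\xi_n((J_n,j_n))=\sum_{i=0}^{j_n-1}\varphi_{J_i}^{(n-1)}$, and the residual problem — a point at intermediate height $j'$ inside $Z_{J_{n-1}}^{(n-1)}$ with $j'<h_{J_{n-1}}^{(n-1)}$ — is the same problem one stage down. Iterating down to stage $1$, where $h_K^{(0)}=1$ and $\xi_1((J,j))=\sum_{i=0}^{j}\varphi_{J_i}^{(0)}$ handles the final step, gives the telescoping identity
\[
\sum_{k=0}^{j}\varphi\bigl(T_{\alpha_0}^k(x_J)\bigr)=\sum_{k=1}^{n}\xi_k\bigl((J_k,j_k)\bigr)=\sum_{k=1}^n\xi_k(X_k)\bigl(\omega\bigr)
\]
for $\omega$ any point of $\Sigma$ in the corresponding cylinder, together with the height bookkeeping $j=\sum_{k} (\text{heights of traversed subtowers})$ which shows the map $j\mapsto[(J_1,j_1),\dots,(J_n,j_n)]$ is a bijection onto admissible cylinders with $J_n=J$.

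I expect the bookkeeping of the recursion — keeping the indices straight between the non-rescaled towers at consecutive stages, the off-by-one in the definitions of $\xi_1$ versus $\xi_n$ for $n\ge 2$, and the verification that the residual level index stays in the correct range at each step — to be the main (though routine) obstacle; there is no analytic difficulty, only careful combinatorial unwinding of the cutting-and-stacking structure recorded in Proposition \ref{prop: Tower Structure}. Once the bijection and the Birkhoff-sum identity are established, the conclusion is immediate:
\[
\frac{1}{h_J^{(n)}}\#\Bigl\{0\le j\le h_J^{(n)}-1:\ \sum_{k=0}^{j}\varphi(T_{\alpha_0}^k(x_J))\in A\Bigr\}
=\sum_{\substack{\text{cyl. of length }n\\ J_n=J}}\frac{1}{h_J^{(n)}}\,\ind\Bigl(\sum_{k=1}^n\xi_k(X_k)\in A\Bigr)
=\mu_n^J\Bigl(\sum_{k=1}^n\xi_k(X_k)\in A\Bigr),
\]
where the last equality uses that $\mu_n^J$ gives each such cylinder mass $1/h_J^{(n)}$ by Proposition \ref{prop: Markovian measure}.
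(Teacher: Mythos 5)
Your proposal is correct and follows essentially the same route as the paper: the paper also proves the identity $\sum_{k=0}^{j}\varphi(T_{\alpha_{0}}^{k}(x_J))=\sum_{k=1}^{n}\xi_{k}((J_k,j_k))$ by recursively splitting the intermediate Birkhoff sum into the special Birkhoff sums $\xi_{n}((J,j_n))$ over fully traversed subtowers plus a residual sum one renormalization stage down (formalized there as an induction on $n$), and then concludes via the level-to-cylinder bijection of Proposition \ref{prop: Coding} together with $\mu_n^J([\omega])=1/h_J^{(n)}$ from Proposition \ref{prop: Markovian measure}. The only cosmetic difference is that you phrase the decomposition as a top-down unwinding while the paper runs the induction bottom-up.
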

\begin{proof}
We show by induction on $n$ that, for any $J\in\left\{ L,M,S\right\} $,
any $x\in I_{J}^{\left(n\right)}$ and $0\leq l\leq h_{J}^{\left(n\right)}-1$,
we have that
\begin{equation}
\sum_{k=0}^{l}\varphi\left(T_{\alpha_{0}}^{k}\left(x\right)\right)\in A\iff\sum_{k=1}^{n}\xi_{k}\left(\left(J_{k},j_{k}\right)\right)\in A.\label{eq:Correspondence between sums on Markov chain and Birkhoff sums}
\end{equation}
where $\left[\omega\right]=\left[\left(J_{1},j_{1}\right),...,\left(J_{n},j_{n}\right)\right]$
is the (unique) cylinder containing $T_{\alpha_{0}}^{l}x$. To see
this, note first that for $n=1$, $J\in\left\{ L,M,S\right\} $, $x\in I_{J}^{\left(1\right)}$
and $0\leq l\leq h_{J}^{\left(1\right)}-1$, we have $T^{l}x\in\Psi^{-1}\left(\left[\left(J,l\right)\right]\right)$
and by definition of $\xi_{1}$,
\[
\xi_{1}\left(\left(J,l\right)\right)=\sum_{k=1}^{l}\varphi_{\left(\tau_{0}\left(J\right)\right)_{k}}^{\left(0\right)}=\sum_{k=0}^{l-1}\varphi\left(T_{\alpha_{0}}^{k}\left(x\right)\right),
\]
which proves the claim for $n=1$. Now, assume that (\ref{eq:Correspondence between sums on Markov chain and Birkhoff sums})
holds for some $n\in\bbN$. Then for $J\in\left\{ L,M,S\right\} $,
$x\in I_{J}^{\left(n+1\right)}$, and $0\leq l\leq h_{J}^{\left(n+1\right)}-1$,
let $\left[\omega\right]=\left[\left(J_{1},j_{1}\right),...,\left(J_{n+1},j_{n+1}\right)\right]$
be the unique cylinder such that $T_{\alpha_{0}}^{l}\left(x\right)\in\Psi^{-1}\left(\left[\omega\right]\right)$.
Then $J_{n}=J$ and by Proposition \ref{prop: Coding} $\Psi^{-1}\left(\left[\omega\right]\right)=T_{\alpha_{0}}^{l}\left(I_{J}\right)$.
It follows from definition of the map $\Psi$, that $J_{n+1}=J$ and
\[
\sum_{i=0}^{j_{n+1}-1}h_{\left(\tau\left(J\right)\right)_{i}}^{\left(n\right)}<l\leq\sum_{i=0}^{j_{n+1}}h_{\left(\tau\left(J\right)\right)_{i}}^{\left(n\right)}.
\]
Thus, setting $l'=l-\sum_{i=1}^{j_{n+1}-1}h_{\left(\tau\left(J\right)\right)_{i}}^{\left(n\right)}$,
$x'=\left(T^{\left(n\right)}\right)^{j_{n+1}}\left(x\right)$ and
using the definition of $\xi_{n+1}$, we may write
\begin{equation}
\sum_{k=0}^{l}\varphi\left(T_{\alpha_{0}}^{k}\left(x\right)\right)=\sum_{k=0}^{j_{n+1}-1}\varphi_{\left(\tau_{n}\left(J\right)\right)_{k}}^{\left(n\right)}+\sum_{k=0}^{l'}\varphi\left(T_{\alpha_{0}}^{k}\left(x'\right)\right)=\xi_{n+1}\left(\left(J,j_{n+1}\right)\right)+\sum_{k=0}^{l'}\varphi\left(T_{\alpha_{0}}^{k}\left(x'\right)\right).\label{eq: Splitting of Birkhoff sum}
\end{equation}
The previous equality is obtained by splitting the Birkhoff sum up
to $l$ of a point at the base of the tower $Z_{J_{n+1}}^{\left(n+1\right)}$
into special Birkhoff sums over towers obtained at the $n^{th}$ stage
of the renormalization procedure and a remainder given by $\sum_{k=0}^{l'}\varphi\left(T_{\alpha_{0}}^{k}\left(x'\right)\right)$.
Now, by definition of the coding map $\Psi$, $T_{\alpha_{0}}^{l'}\left(x'\right)\in\Psi^{-1}\left(\left[\left(J_{1},j_{1}\right),...,\left(J_{n},j_{n}\right)\right]\right)$.
Thus, if for $y\in\bbR$, we let $A-y$ denote the set $\left\{ a-y:\ a\in A\right\} $,
(\ref{eq: Splitting of Birkhoff sum}) implies,
\[
\sum_{k=0}^{l}\varphi\left(T_{\alpha_{0}}^{k}\left(x\right)\right)\in A\iff\sum_{k=0}^{l'}\varphi\left(T_{\alpha_{0}}^{k}\left(x'\right)\right)\in A-\xi_{n+1}\left(J,j_{n+1}\right).
\]
and the equality (\ref{eq:Correspondence between sums on Markov chain and Birkhoff sums})
now follows from the hypothesis of induction, which gives 
\[
\sum_{k=0}^{l'}\varphi\left(T_{\alpha_{0}}^{k}\left(x'\right)\right)\in A-\xi_{n+1}\left(J,j_{n+1}\right)\iff\sum_{k=1}^{n}\xi_{k}\left(X_{k}\left(J_{k},j_{k}\right)\right)\in A-\xi_{n+1}\left(J,j_{n+1}\right).
\]
Since by Proposition \ref{prop: Markovian measure}, $\mu_{n}^{J}\left(\left[\omega\right]\right)=\frac{1}{h_{J}^{\left(n\right)}}$
for any $\omega\in\Sigma_{n}$, and since by the proof of Proposition
\ref{prop: Coding}, the levels of the tower $Z_{J}^{\left(n\right)}$
are in bijective correspondence with cylinders of length $n$ in $\Sigma_{n}$,
the proof is complete. 
\end{proof}

\section{\label{sec:The-CLT-for-Markov chains}the clt for markov chains}

In the previous section we established that the study of intermediate
Birkhoff sums can be reduced to the study of (in general) non-homogeneous
Markov chains. In this section we establish some (mostly well-known)
statements about such Markov chains which we use in the proof of our
temporal CLT. The main result which we need is the CLT for non-homogeneous
Markov chains. To the best of our knowledge, this was initially established
by Dobrushin \cite{dobrushin1956central1,dobrushin1956central2} (see
also \cite{sethuraman2005martingale} for a proof using martingale
approximations). Dobrushin's CLT is not directly valid in our case
(since it assumes that the contraction coefficient is strictly less
than $1$ for every transition matrix in the underlying chain, while
under our assumptions this is only valid for a product of a constant
number of matrices). While the proof of Dobrushin's theorem can be
reworked to apply to our assumptions, we do not do it here, and instead
use a general CLT for $\varphi$-mixing triangular arrays of random
variables by Utev. 

\subsection{Contraction coefficients, mixing properties and CLT for Markov chains }

In this section we collect some probability theory results for (arrays
of) non-homogeneous Markov chains that we will use in the next section. 

\smallskip{}
Let $\left(\Omega,\mathcal{B},P\right)$ be a probability space and
let $\mathcal{F}$, $\mathcal{G}$ be two sub $\sigma$-algebras of
$\mathcal{B}$. For any $\sigma$-algebra $\mathcal{A}\subset\mathcal{B}$,
denote by $\mathcal{L}^{2}\left(\mathcal{A}\right)$ the space of
square integrable, real functions on $\Omega$, which are measurable
with respect to $\mathcal{A}$. We use two measures of dependence
between $\mathcal{F}$ and $\mathcal{G}$, the so called $\varphi$-coefficient
and $\rho$-coefficient, defined by 
\[
\varphi\left(\mathcal{F},\mathcal{G}\right):=\sup_{A\in\mathcal{G},B\in\mathcal{F}}\left|P\left(A\left|B\right.\right)-P\left(A\right)\right|
\]
and 
\[
\rho\left(\mathcal{F},\mathcal{G}\right):=\sup_{f\in\mathcal{L}^{2}\left(\mathcal{F}\right),g\in\mathcal{L}^{2}\left(\mathcal{G}\right)}\left|\frac{Cov\left(f,g\right)}{\sqrt{Var\left(f\right)Var\left(g\right)}}\right|.
\]
It is a well-known fact (see \cite{bradley2005basic}) that 
\begin{equation}
\rho\left(\mathcal{F},\mathcal{G}\right)\leq2\left(\varphi\left(\mathcal{F},\mathcal{G}\right)\right)^{\frac{1}{2}}.\label{eq: Correspondence between mixing coefficients}
\end{equation}
In what follows, let $Y=\left\{ Y_{1}^{\left(n\right)},...,Y_{n}^{\left(n\right)}:\,n\geq1\right\} $
be a triangular array of mean zero, square integrable random variables
such that the random variables in each row are defined on the same
probability space $\left(\Omega,\mathcal{B},P\right)$. For any set
$\mathcal{Y}$ of random variables defined on $\left(\Omega,\mathcal{B},P\right)$,
let us denote by $\sigma\left(\mathcal{Y}\right)$ to be the $\sigma$-algebra
generated by all the random variables in $\mathcal{Y}.$ 

Set $S_{n}=\sum_{k=1}^{n}Y_{k}^{\left(n\right)}$ and $e_{n}=E\left(S_{n}\right)$,
$\sigma_{n}=\sqrt{Var\left(S_{n}\right)}$. For any $n,k\in\mathbb{N}$
let 
\[
\varphi_{n}\left(k\right):=\sup\limits _{1\leq s,s+k\leq n}\varphi\left(\sigma\left(Y_{i}^{\left(n\right)},i\leq s\right),\sigma\left(Y_{i}^{\left(n\right)},i\geq s+k\right)\right),
\]
\[
\varphi\left(k\right):=\sup_{n}\varphi_{n}\left(k\right).
\]
The array $Y$ is said to be $\varphi$-\emph{mixing} if $\varphi\left(k\right)\rightarrow0$
as $k$ tends to infinity. 

\medskip{}

The following CLT for $\varphi$-mixing arrays of random variables,
which follows from a more general CLT for such arrays in \cite{utev1991central}
is the main result that we use to prove our distributional CLT. 
\begin{thm}
\label{thm: CLT for phi-mixing}Let $Y$ be a $\varphi$-mixing array
of square integrable random variables and assume that 
\begin{equation}
\lim_{n\to\infty}\sigma_{n}^{-2}\sum_{k=1}^{n}E\left(Y_{k}^{\left(n\right)}\ind_{\left\{ \left|Y_{k}^{\left(n\right)}\right|>\epsilon\sigma_{n}\right\} }\right)=0\label{eq:LFcondition}
\end{equation}
for every $\epsilon>0$. Then 
\[
\frac{S_{n}-e_{n}}{\sigma_{n}}
\]
converges in law to the standard normal distribution. 
\end{thm}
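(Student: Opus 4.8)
The plan is to obtain Theorem~\ref{thm: CLT for phi-mixing} as a special case of Utev's general central limit theorem for $\varphi$-mixing triangular arrays \cite{utev1991central}. That theorem applies to arrays that are $\varphi$-mixing with mixing coefficient tending to $0$ as the gap grows and that satisfy a Lindeberg-type condition, and its conclusion is the asymptotic normality of the row sums normalized by their own standard deviation; so the work consists entirely in matching our hypotheses to Utev's. First I would pass, if needed, to the normalized array $Z_{k}^{(n)}:=\bigl(Y_{k}^{(n)}-E(Y_{k}^{(n)})\bigr)/\sigma_{n}$, $1\le k\le n$, which is a measurable function of $Y_{k}^{(n)}$ alone and hence has the same mixing coefficients $\varphi_{n}(k)$ as $Y$, while $\sum_{k=1}^{n}Z_{k}^{(n)}=(S_{n}-e_{n})/\sigma_{n}$ has variance identically $1$. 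The $\varphi$-mixing hypothesis $\varphi(k)\to0$ then transfers to $Z$ verbatim; monotonicity of $k\mapsto\varphi(k)$ gives $\varphi(m)<1$ for all $m$ beyond some $m_{0}$, and via \eqref{eq: Correspondence between mixing coefficients} one also has $\rho(k)\le 2\varphi(k)^{1/2}\to0$. Moreover the Lindeberg--Feller condition \eqref{eq:LFcondition} is exactly the Lindeberg condition required by Utev's theorem for the array $Z$, and, as is standard, it forces the uniform asymptotic negligibility $\max_{1\le k\le n}Var(Z_{k}^{(n)})\to0$.

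The one point on which $\varphi$-mixing (as opposed to independence) has to do some work --- in case the precise form of Utev's theorem needs it --- is a uniform Lyapunov-type bound on $\sum_{k=1}^{n}Var(Z_{k}^{(n)})$, equivalently the comparability of $\sum_{k}Var(Y_{k}^{(n)})$ with $\sigma_{n}^{2}=Var(S_{n})$. This I would supply from the covariance inequality $|Cov(f,g)|\le 2\,\varphi(\mathcal{F},\mathcal{G})^{1/2}\sqrt{Var(f)Var(g)}$ contained in \eqref{eq: Correspondence between mixing coefficients}, by grouping the indices into consecutive blocks of length $m_{0}$ so that cross-block covariances account for only a controlled fraction of the diagonal. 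With the hypotheses verified, \cite{utev1991central} yields that $(S_{n}-e_{n})/\sigma_{n}$ converges in law to the standard normal, which is the assertion.

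The step I expect to be the real obstacle is exactly this translation: extracting from \cite{utev1991central} a statement in a form that applies directly here, and checking that the non-homogeneity of the array causes no loss --- there is no stationarity to lean on when comparing the sum of the individual variances with $Var(S_{n})$, or when estimating what the boundary indices contribute. If one preferred a self-contained argument over invoking \cite{utev1991central}, the natural route is the Bernstein big-block/small-block method: split $\{1,\dots,n\}$ into alternating long blocks of length $p_{n}$ and short blocks of length $q_{n}$ with $q_{n}\ll p_{n}\ll n$ and $q_{n}$ large enough that $\varphi(q_{n})$ lies below the threshold where the covariance estimate is effective; show the short-block sum has variance $o(\sigma_{n}^{2})$; use $\varphi$-mixing to replace the long-block sums by an independent array with the same one-dimensional marginals up to an error governed by $\varphi(q_{n})$, to which the Lindeberg condition \eqref{eq:LFcondition} descends; apply the classical Lindeberg--Feller theorem to that independent array; and finally verify that the variance of the long-block sum tends to $Var(S_{n})$. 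In either approach the analytic heart is the covariance control of \eqref{eq: Correspondence between mixing coefficients}. The verification that the particular array modeling the temporal distribution random variables meets the mixing and Lindeberg hypotheses is carried out in Section~\ref{sec:Proof-of-the TCLT}.
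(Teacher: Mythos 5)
Your proposal is correct and coincides with what the paper does: the paper gives no proof of Theorem \ref{thm: CLT for phi-mixing} at all, stating only that it follows from the more general CLT for $\varphi$-mixing triangular arrays in \cite{utev1991central}, which is exactly the reduction you carry out (your hypothesis-matching details and the alternative Bernstein blocking sketch are extra, but consistent with that route). Note only that the indicator condition \eqref{eq:LFcondition} as printed lacks the square on $Y_{k}^{(n)}$; reading it as the standard Lindeberg condition, as you do, is the intended interpretation.
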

Let $\mathcal{T},\mathcal{S}$ be finite sets and $P$ a stochastic matrix with entries
indexed by $\mathcal{T}\times \mathcal{S}$. The contraction coefficient of $P$ is defined
by 
\begin{equation}
\tau\left(P\right)=\frac{1}{2}\sup\limits _{x_{1},x_{2}\in \mathcal{T}}\sum_{s\in \mathcal{S}}\left|P_{x_{1},s}-P_{x_{2},s}\right|.\label{eq: Contraction coefficient}
\end{equation}

It is not difficult to see that $\tau\left(P\right)=0$ if and only
if the entry $P_{s,t}$ does not depend on $s$ and that 
\begin{equation}
\tau\left(PQ\right)\leq\tau\left(P\right)\tau\left(Q\right)\label{eq: Submult of Erg. coefficient}
\end{equation}
 for any pair of stochastic matrices $P$ and $Q$ such that their
product is defined. 

For $n\in\bbN$, let $X_{1}^{\left(n\right)},...,X_{n}^{\left(n\right)}$
be a Markov chain with each $X_{i}^{\left(n\right)}$ taking values
in a finite state space $\mathcal{S}_{i}$, determined by an initial
distribution $\pi_{n}$ and transition matrices $P_{i}^{\left(n\right)}$,
$i=1,...n$ (thus, each matrix $P_{i}^{\left(n\right)}$ has dimension
$\left|\mathcal{S}_{i}\right|\times\left|\mathcal{S}_{i+1}\right|$). 
\begin{prop}
\label{prop: Contraction coefficient implies phi mixing}Assume that
there exist $0\leq\delta<1$ and $s\in\bbN$ such that for every $n\in\bbN$
\[
\tau\left(P_{k}^{\left(n\right)}...P_{k+s}^{\left(n\right)}\right)<\delta,\qquad\text{for\,any\ }1\leq k\leq n-s.
\]
Then $X=X_{0}^{\left(n\right)},...,X_{n}^{\left(n\right)}$ is $\varphi$-mixing
and $\varphi\left(k\right)$ tends to $0$ as $k\rightarrow\infty$
with exponential rate. 
\end{prop}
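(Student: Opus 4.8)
The plan is to show that the hypothesis on contraction coefficients controls the $\varphi$-mixing coefficient $\varphi(k)$ of the chain, and that the decay of $\varphi(k)$ is geometric. First I would recall the standard fact that for a (finite-state, possibly non-homogeneous) Markov chain, the $\varphi$-mixing coefficient between the past $\sigma(X_i^{(n)},\,i\le s)$ and the future $\sigma(X_i^{(n)},\,i\ge s+k)$ is governed by the contraction coefficient of the product of transition matrices bridging the gap, namely of $P_s^{(n)}P_{s+1}^{(n)}\cdots P_{s+k-1}^{(n)}$. Concretely, by the Markov property the conditional law of $X_{s+k}^{(n)}$ given the past is a row of this product matrix indexed by the value of $X_s^{(n)}$, so differences of conditional probabilities $|P(A\mid B)-P(A)|$ for $A$ in the future and $B$ in the past are bounded — after integrating out $X_{s+k}^{(n)}$ and using that everything downstream is a deterministic function of the Markov chain from time $s+k$ on — by $\tau\bigl(P_s^{(n)}\cdots P_{s+k-1}^{(n)}\bigr)$. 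This reduces the claim to estimating the contraction coefficient of a long product of transition matrices.

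Next I would invoke submultiplicativity \eqref{eq: Submult of Erg. coefficient} together with the hypothesis. Given $k\ge s+1$, split the window of length $k$ into $\lfloor k/(s+1)\rfloor$ consecutive blocks of length $s+1$ (plus a harmless leftover block), and bound
\[
\tau\bigl(P_s^{(n)}\cdots P_{s+k-1}^{(n)}\bigr)\le \prod \tau\bigl(P_{j}^{(n)}\cdots P_{j+s}^{(n)}\bigr)\cdot\tau(\text{leftover})\le \delta^{\lfloor k/(s+1)\rfloor},
\]
using that each full block has contraction coefficient $<\delta$ by hypothesis and that $\tau\le 1$ always. Taking the supremum over the base point $s$ and over $n$ gives $\varphi(k)\le C\delta^{k/(s+1)}$ for an absolute constant $C$ (one may absorb the leftover block and the factor-of-one-or-two constants), which is exponential decay; in particular $\varphi(k)\to 0$, so the chain is $\varphi$-mixing.

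The one genuinely technical point — and the main obstacle — is the first step: carefully verifying the inequality $\varphi\bigl(\sigma(\text{past}),\sigma(\text{future})\bigr)\le \tau\bigl(\text{bridging product}\bigr)$ for a non-homogeneous chain with time-varying state spaces, keeping track of the fact that "future" events are measurable with respect to $X_{s+k}^{(n)},X_{s+k+1}^{(n)},\dots$ and thus can be conditioned through $X_{s+k}^{(n)}$. This is classical (it goes back to the very definition of the contraction coefficient as a bound on total variation of differences of conditional one-step laws, iterated), so I would either cite \cite{bradley2005basic} or give the short computation: for $A$ in the future and $B$ in the past with $P(B)>0$,
\[
\bigl|P(A\mid B)-P(A)\bigr|=\Bigl|\sum_{t}\bigl(P(X_{s+k}^{(n)}=t\mid B)-P(X_{s+k}^{(n)}=t)\bigr)P(A\mid X_{s+k}^{(n)}=t)\Bigr|\le \tfrac12\sum_t\bigl|P(X_{s+k}^{(n)}=t\mid B)-P(X_{s+k}^{(n)}=t)\bigr|,
\]
and the last expression is at most $\tau\bigl(P_s^{(n)}\cdots P_{s+k-1}^{(n)}\bigr)$ because conditioning on $B$ only fixes (a distribution on) the value of $X_s^{(n)}$, and $\tau$ of the product matrix bounds the total variation distance between any two of its rows — hence between any two mixtures of its rows. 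The rest is the bookkeeping of the previous paragraph.
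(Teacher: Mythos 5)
Your proposal is correct and follows essentially the same route as the paper: the paper's proof consists precisely of the inequality $\varphi\left(\sigma\left(Y_{i},i\leq j\right),\sigma\left(Y_{i},i\geq j+k\right)\right)\leq\tau\left(P_{j}^{\left(n\right)}\cdots P_{j+k}^{\left(n\right)}\right)$ (quoted from Iosifescu--Theodorescu) combined with submultiplicativity of $\tau$. The only difference is that you supply the short self-contained computation for that inequality instead of citing it, and your bookkeeping of the block length ($\lfloor k/(s+1)\rfloor$ blocks of $s+1$ matrices) is if anything slightly more careful than the paper's exponent $\left[k/s\right]$.
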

\begin{proof}
This is a direct consequence of the inequality 
\[
\varphi\left(\sigma\left(Y_{i}^{\left(n\right)},i\leq j\right),\sigma\left(Y_{i}^{\left(n\right)},i\geq j+k\right)\right)\leq\tau\left(P_{j}^{\left(n\right)}...P_{j+k}^{\left(n\right)}\right)\qquad\text{for}\ 1\leq j\leq n-k
\]
(see relation (1.1.2) and Proposition 1.2.5 in \cite{iosifescu1969random})
and the fact that $\tau\left(P_{j}^{\left(n\right)}...P_{j+k}^{\left(n\right)}\right)\leq\delta^{\left[\frac{k}{s}\right]}$,
which immediately follows from the assumption and (\ref{eq: Submult of Erg. coefficient}). 
\end{proof}
Now, let $\xi_{i}^{\left(n\right)}:\mathcal{S}_{i}\rightarrow\bbR$
, with $1\leq i\leq n$ for any $n\in\mathbb{N}$, be an array of
functions and set $Y_{i}^{\left(n\right)}=\xi_{i}^{\left(n\right)}\left(X_{i}^{\left(n\right)}\right)$.
Henceforth, we assume that 
\begin{equation}
\sup\left\{ \left|\xi_{i}^{\left(n\right)}\left(s\right)\right|:\,n\in\bbN,\,i=0,...,n,\ s\in\mathcal{S}_{i}\right\} =M<\infty.\label{eq: Uniform boundness of function array}
\end{equation}
An application of Theorem \ref{thm: CLT for phi-mixing} yields the
following corollary. 
\begin{cor}
\label{cor:Under-the-conditions}Under the conditions of Proposition
\ref{prop: Contraction coefficient implies phi mixing}, assume further
that and $\sigma_{n}\rightarrow\infty$. Then $\frac{S_{n}-e_{n}}{\sigma_{n}}$
converges in law to the standard normal distribution.
\end{cor}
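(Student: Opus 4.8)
The plan is to deduce Corollary \ref{cor:Under-the-conditions} by verifying that the hypotheses of Theorem \ref{thm: CLT for phi-mixing} hold for the array $Y_{i}^{(n)}=\xi_{i}^{(n)}(X_{i}^{(n)})$. First, I would replace the functions $\xi_i^{(n)}$ by their centered versions $\tilde\xi_i^{(n)}:=\xi_i^{(n)}-E(\xi_i^{(n)}(X_i^{(n)}))$, so that the new array $\tilde Y_i^{(n)}=\tilde\xi_i^{(n)}(X_i^{(n)})$ has mean zero; centering changes neither the $\varphi$-mixing coefficients (since $\tilde\xi_i^{(n)}$ is still a function of $X_i^{(n)}$, so $\sigma(\tilde Y_i^{(n)},i\le s)=\sigma(Y_i^{(n)},i\le s)$) nor the variances $\sigma_n$, and it only shifts $S_n$ by the deterministic constant $e_n$, which is exactly what is subtracted off in the conclusion. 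Note also that centering preserves the uniform bound \eqref{eq: Uniform boundness of function array}, replacing $M$ by at most $2M$.

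Next I would check $\varphi$-mixing: by Proposition \ref{prop: Contraction coefficient implies phi mixing}, the assumed uniform contraction $\tau(P_k^{(n)}\cdots P_{k+s}^{(n)})<\delta<1$ for all admissible $k,n$ gives $\varphi(k)\le C\delta^{\lfloor k/s\rfloor}\to 0$ as $k\to\infty$, so the array is $\varphi$-mixing with exponentially decaying mixing rate; in particular $\varphi(k)\to 0$, which is all Theorem \ref{thm: CLT for phi-mixing} requires of the mixing coefficients. Then I would verify the Lindeberg-type condition \eqref{eq:LFcondition}. This is where the uniform bound \eqref{eq: Uniform boundness of function array} does the work: since $|\tilde Y_k^{(n)}|\le 2M$ for every $k,n$, and $\sigma_n\to\infty$ by hypothesis, for any fixed $\epsilon>0$ we have $\epsilon\sigma_n>2M$ for all $n$ large enough, hence the indicator $\ind_{\{|\tilde Y_k^{(n)}|>\epsilon\sigma_n\}}$ vanishes identically for all $k$ once $n$ is large. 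Therefore $\sigma_n^{-2}\sum_{k=1}^n E(\tilde Y_k^{(n)}\ind_{\{|\tilde Y_k^{(n)}|>\epsilon\sigma_n\}})=0$ for all large $n$, so the limit in \eqref{eq:LFcondition} is trivially zero.

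With both hypotheses of Theorem \ref{thm: CLT for phi-mixing} in place for the centered array, we conclude that $(\sum_{k=1}^n\tilde Y_k^{(n)})/\sigma_n$ converges in law to the standard normal; since $\sum_{k=1}^n\tilde Y_k^{(n)}=S_n-e_n$ and $\sigma_n=\sqrt{\mathrm{Var}(S_n)}$, this is precisely the assertion that $(S_n-e_n)/\sigma_n$ is asymptotically standard normal. The only genuinely external input is Utev's CLT, invoked via Theorem \ref{thm: CLT for phi-mixing}; everything else is bookkeeping. I do not anticipate a real obstacle here — the corollary is essentially an assembly of the two preceding results — but the one point to state carefully is that the contraction hypothesis in Proposition \ref{prop: Contraction coefficient implies phi mixing} is required to hold uniformly in $n$ with the \emph{same} $\delta$ and $s$, so that the bound $\varphi(k)\le C\delta^{\lfloor k/s\rfloor}$ is uniform over the rows of the triangular array, which is exactly what makes $\varphi(k)=\sup_n\varphi_n(k)\to 0$.
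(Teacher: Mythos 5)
Your proposal is correct and follows essentially the same route as the paper: the authors likewise observe that $\varphi$-mixing is supplied by Proposition \ref{prop: Contraction coefficient implies phi mixing} and that the Lindeberg condition \eqref{eq:LFcondition} holds trivially for large $n$ because the uniform bound \eqref{eq: Uniform boundness of function array} forces the indicator to vanish once $\epsilon\sigma_n$ exceeds the bound, given $\sigma_n\to\infty$. Your explicit centering step is harmless bookkeeping that the paper leaves implicit.
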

\begin{proof}
It is enough to remark that the condition (\ref{eq:LFcondition})
in Theorem \ref{thm: CLT for phi-mixing} holds trivially for $n$
large in virtue of the bound in (\ref{eq: Uniform boundness of function array})
since by assumption the variance $\sigma_{n}\rightarrow\infty.$ 
\end{proof}
Let now $\tilde{\pi}_{n}$ be a sequence of probability distributions
on $\mathcal{S}_{1}$, and let $\tilde{X}_{1}^{\left(n\right)},...,\tilde{X}_{n}^{\left(n\right)}$
be an array of Markov chains generated by initial distributions $\tilde{\pi}_{n}$
and transition matrices $P_{i}^{\left(n\right)}$. Let $\tilde{S}_{n}=\sum_{k=0}^{n-1}\xi_{i}\left(\tilde{X}_{i}\right)$
and let $\tilde{e}_{n}=E\left(\tilde{S}_{n}\right)$, $\tilde{\sigma}_{n}=\sqrt{Var\left(\tilde{S}_{n}\right)}$. 
\begin{prop}
\label{prop: Same order of variances and expectations}Under the conditions
of Proposition \ref{prop: Contraction coefficient implies phi mixing},
there exists a constant $C$, independent of the sequences $\pi_{n}$
and $\tilde{\pi}_{n}$, such that $\left|e_{n}-\tilde{e}_{n}\right|\leq C$
and $\left|\sigma_{n}^{2}-\tilde{\sigma}_{n}^{2}\right|\leq C$ for
all $n\in\bbN$. 
\end{prop}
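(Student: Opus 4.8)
The claim is that the expectations $e_n, \tilde e_n$ and the variances $\sigma_n^2, \tilde\sigma_n^2$ of the two arrays (which differ only in the initial distribution on $\mathcal{S}_1$) stay within a bounded distance of one another, uniformly in $n$, with a bound independent of the two sequences of initial distributions. The underlying reason is that the contraction hypothesis of Proposition \ref{prop: Contraction coefficient implies phi mixing} forces the law of $X_i^{(n)}$ to ``forget'' the initial distribution exponentially fast as $i$ grows: writing $\mu_i$ and $\tilde\mu_i$ for the distributions of $X_i^{(n)}$ and $\tilde X_i^{(n)}$ on $\mathcal{S}_i$, one has $\mu_i = \pi_n P_1^{(n)}\cdots P_{i-1}^{(n)}$ and $\tilde\mu_i = \tilde\pi_n P_1^{(n)}\cdots P_{i-1}^{(n)}$, and since the total variation distance between two probability vectors after multiplication by a stochastic matrix $P$ contracts by the factor $\tau(P)$, the submultiplicativity \eqref{eq: Submult of Erg. coefficient} together with the hypothesis gives $\|\mu_i - \tilde\mu_i\|_{TV} \le \delta^{\lfloor (i-1)/s\rfloor}$. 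This is summable in $i$, with a sum bounded by $s/(1-\delta)$, a constant depending only on $s$ and $\delta$.

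With this in hand, the bound on expectations is immediate: since each $\xi_i^{(n)}$ is bounded by $M$ (see \eqref{eq: Uniform boundness of function array}),
\[
\left| e_n - \tilde e_n \right|
= \left| \sum_{i=1}^n \left( \sum_{s\in\mathcal{S}_i} \xi_i^{(n)}(s)\,(\mu_i(s) - \tilde\mu_i(s)) \right) \right|
\le M \sum_{i=1}^n \|\mu_i - \tilde\mu_i\|_{TV}
\le \frac{MS}{1-\delta} =: C_1 .
\]
For the variances, first write $\sigma_n^2 = \sum_{i,j} \mathrm{Cov}(Y_i^{(n)},Y_j^{(n)})$ and similarly for $\tilde\sigma_n^2$. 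The key additional input is that the off-diagonal covariances decay: by Proposition \ref{prop: Contraction coefficient implies phi mixing} the array is $\varphi$-mixing with exponential rate, so, combining \eqref{eq: Correspondence between mixing coefficients} with the covariance inequality for $\rho$-mixing and the bound \eqref{eq: Uniform boundness of function array}, one gets $|\mathrm{Cov}(Y_i^{(n)},Y_j^{(n)})| \le 2M^2 \varphi(|i-j|)^{1/2} \le C\,\theta^{|i-j|}$ for some $\theta<1$; the same bound holds for the tilde array since it uses the same transition matrices. Hence each covariance is a function of the joint law of $(X_i^{(n)},X_j^{(n)})$, which is $\mu_i$ (resp.\ $\tilde\mu_i$) pushed forward through $P_i^{(n)}\cdots P_{j-1}^{(n)}$; the difference $|\mathrm{Cov}(Y_i^{(n)},Y_j^{(n)}) - \mathrm{Cov}(\tilde Y_i^{(n)},\tilde Y_j^{(n)})|$ is therefore controlled both by $M^2\|\mu_{\min(i,j)} - \tilde\mu_{\min(i,j)}\|_{TV} \le M^2\delta^{\lfloor(\min(i,j)-1)/s\rfloor}$ (good when $\min(i,j)$ is large) and by $2C\theta^{|i-j|}$ (good when $|i-j|$ is large). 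Taking the minimum of the two bounds and summing over all pairs $(i,j)$ with $1\le i,j\le n$ yields a convergent double series whose sum is bounded independently of $n$ and of the initial distributions, which is the desired constant $C_2$; taking $C = \max(C_1, C_2)$ completes the argument.

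**Main obstacle.** The only genuinely delicate point is the variance estimate: the naive bound $|\sigma_n^2 - \tilde\sigma_n^2| \le \sum_{i,j} |\mathrm{Cov}(Y_i) - \mathrm{Cov}(\tilde Y_i)| \le n^2 \cdot O(\|\mu_1-\tilde\mu_1\|_{TV})$ is useless, since it grows with $n$. One must exploit \emph{both} decay mechanisms simultaneously — forgetting of the initial condition (exponential in $\min(i,j)$) and mixing (exponential in $|i-j|$) — and organize the double sum so that for each pair one uses whichever bound is smaller. Concretely, splitting the index set according to whether $|i-j| \le \min(i,j)/2$ or not, and in each region using the appropriate exponential bound, shows the tail is summable; this is the step that requires care but no deep idea.
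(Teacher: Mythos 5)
Your argument is correct, and for the expectation bound it coincides with the paper's: both exploit that the contraction hypothesis makes the chain forget its initial distribution exponentially fast in $i$ (the paper phrases this via an approximation of the partial products $P_1^{(n)}\cdots P_{i-1}^{(n)}$ by rank-one stochastic matrices, you via total-variation contraction and submultiplicativity of $\tau$; these are the same estimate), after which one sums a geometric series. The genuine difference is in the off-diagonal covariance part of the variance bound. The paper's proof rests on the intermediate claim (\ref{eq: Covariance bound}) that the \emph{whole} sum $\sum_{1\le i<j\le n}\mathrm{Cov}\bigl(\xi_i^{(n)}(X_i^{(n)}),\xi_j^{(n)}(X_j^{(n)})\bigr)$ is bounded uniformly in $n$, and then controls the difference of the two covariance sums by the triangle inequality. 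As you observe, mixing only yields $|\mathrm{Cov}(Y_i^{(n)},Y_j^{(n)})|\le C\theta^{|i-j|}$, so that full sum is a priori only $O(n)$; the paper's intermediate claim is stronger than its cited ingredients justify (and is not what is needed). Your pairwise estimate --- bounding each difference of covariances by the minimum of the forgetting bound (exponential in $\min(i,j)$) and the mixing bound (exponential in $|i-j|$), then summing via $\min(a,b)\le\sqrt{ab}$ or your region splitting --- establishes exactly the statement actually required, namely that the \emph{difference} of the two covariance sums is $O(1)$; so your route is both different and, on this point, more careful than the paper's. Two cosmetic remarks: since the hypothesis controls products of $s+1$ consecutive matrices, the exponent should be $\lfloor(i-1)/(s+1)\rfloor$ rather than $\lfloor(i-1)/s\rfloor$, and the $S$ in your constant $MS/(1-\delta)$ should be $s+1$; neither affects the argument.
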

\begin{proof}
The assumption implies that there exists a constant $M$ and a sequence
of rank $1$ stochastic matrices (i.e stochastic matrices with all
rows being identical) $V_{i}^{\left(n\right)}$ such that 
\[
\sup\left\{ \left\Vert V_{i}^{\left(n\right)}-\prod_{j=1}^{i}P_{j}^{\left(n\right)}\right\Vert ,\ i=1,...,n,\ n\in\bbN\right\} \leq M\delta^{\frac{i}{s}}
\]
 (see \cite[Chapter 4, Cor. 2]{seneta2006non}), where for two matrices
$P$, $Q$ indexed by $S\times T$, $\left\Vert P-Q\right\Vert =\max\left\{ \left|P_{s,t}-Q_{s,t}\right|:\,\left(s,t\right)\in S\times T\right\} $.
Using (\ref{eq: Uniform boundness of function array}) it follows
that there exists a constant $\tilde{C}$ which depends only on the
array of matrices $P_{i}^{\left(n\right)}$ and functions $\xi_{i}^{\left(n\right)}$,
such that 
\[
\left|E\left(\xi_{i}^{\left(n\right)}\left(X_{i}\right)\right)-E\left(\xi_{i}^{\left(n\right)}\left(\tilde{X}_{i}\right)\right)\right|=\left|\sum_{s\in S_{1}}\sum_{t\in S_{i}}\left(\pi_{n}\left(s\right)-\tilde{\pi}_{n}\left(s\right)\right)\left(P_{1}^{\left(n\right)}...P_{i-1}^{\left(n\right)}\right)_{s,t}\cdot\xi_{i}^{\left(n\right)}\left(t\right)\right|\leq\tilde{C}\delta^{\frac{n}{s}}.
\]
 Since the right hand side of the last inequality is a general term
of a summable geometric series, we have proved that there exists a
constant $C$, such that $\left|e_{n}-\tilde{e}_{n}\right|\leq C$
for all $n\in\bbN$. 

To prove the inequality for the variances, we first note that it follows
from (\ref{eq: Correspondence between mixing coefficients}) and (\ref{eq: Uniform boundness of function array})
that there exists a constant $C'$ independent of $\pi_{n}$, such
that 
\begin{equation}
\left|\sum_{1\leq i<j\leq n}Cov\left(\xi_{i}^{\left(n\right)}\left(X_{i}^{\left(n\right)}\right)\cdot\xi_{j}^{\left(n\right)}\left(X_{j}^{\left(n\right)}\right)\right)\right|<C'\label{eq: Covariance bound}
\end{equation}
 for all $n\in\bbN$. An analogous inequality hence holds also for
the array $\tilde{X}_{i}^{\left(n\right)}$ instead of $\tilde{X}_{i}^{\left(n\right)}$,
so that 
\begin{equation}
\left|\sum_{1\leq i<j\leq n}Cov\left(X_{i}^{(n)},X_{j}^{(n)}\right)-Cov\left(\tilde{X}_{i}^{(n)},\tilde{X}_{j}^{(n)}\right)\right|<2C'.\label{eq:Cov double boud}
\end{equation}
Moreover, since $\sup_{n}\left|\mu_{n}-\tilde{\mu}_{n}\right|<\infty$,
one can also prove that 
\begin{equation}
\sup_{n}\left|\sum_{i=1}^{n}Var\left(\xi_{i}^{\left(n\right)}\left(X_{i}^{\left(n\right)}\right)\right)-Var\left(\xi_{i}^{\left(n\right)}\left(\tilde{X}_{i}^{\left(n\right)}\right)\right)\right|<\infty.\label{eq:control variances}
\end{equation}
Now, write 
\begin{align*}
 &  & \left|\sigma_{n}^{2}-\tilde{\sigma}_{n}^{2}\right| & \leq\left|\sum_{i=1}^{n}\left(Var\left(\xi_{i}^{\left(n\right)}\left(X_{i}^{\left(n\right)}\right)\right)-Var\left(\xi_{i}^{\left(n\right)}\left(\tilde{X}_{i}^{\left(n\right)}\right)\right)\right)\right|\\
 &  &  & +\left|\sum_{1\leq i<j\leq n}Cov\left(\xi_{i}^{\left(n\right)}\left(X_{i}^{\left(n\right)}\right),\xi_{j}^{\left(n\right)}\left(X_{j}^{\left(n\right)}\right)\right)-Cov\left(\xi_{i}^{\left(n\right)}\left(\tilde{X}_{i}^{\left(n\right)}\right),\xi_{j}^{\left(n\right)}\left(\tilde{X}_{j}^{\left(n\right)}\right)\right)\right|.
\end{align*}
 The proof of the Lemma hence follows by (\ref{eq:Cov double boud})
and (\ref{eq:control variances}).
\end{proof}

\section{proof of the temporal clt \label{sec:Proof-of-the TCLT}}

In this section we give the proof of Theorem \ref{thm: Main thm}.
We need to show that we can apply the results on Markov chains summarized
in the previous section (and in particular Corollary \ref{cor:Under-the-conditions})
to the Markov chains that model the dynamics. In order to check that
the required assumptions are verified, we first show, in section \ref{subsec:Positivity-of-products-1}
a result on positivity of the product of finitely many transition
matrices, which follows from the assumption that $\alpha$ is badly
approximable and $\beta$ is badly approximable with respect to $\alpha$.
Then, in section \ref{subsec:Growth-of-the} we prove that the variance
grows. Finally, the proof of the Theorem is given in section \ref{subsec:Proof-of-Theorem}.

\subsection{\label{subsec:Positivity-of-products}Positivity of products of incidence
matrices\label{subsec:Positivity-of-products-1}}

Let us recall that in Section \ref{subsec:Continued-fraction-renormalization}
we described a renormalization procedure that, to a pair of parameters
$\left(\alpha,\beta\right)$ (under the assumption that $\left(\alpha,\beta\right)\in\tilde{X}$),
in particular associates a sequence $\left(A_{n}\right)_{n}$ of matrices
(given by equations (\ref{eq: Transition mat 1}), (\ref{eq: Transition mat 2})
and (\ref{eq: Transition mat 3}) respectively), which are the incidence
matrices of the sequence of substitutions $\left(\tau_{n}\right)_{n}$
which describe the tower structure. In this section, we develop conditions
on the pair $\left(\alpha,\beta\right)\in\tilde{X}$ that ensure that
we may split the sequence of incidence matrices $\left(A_{n}\right)_{n}$
associated to $\left(\alpha,\beta\right)$ into consecutive blocks
of uniformly bounded length, so that the product of matrices in each
block is strictly positive. This fact is used for showing that the
Markov chain associated to $\left(\alpha,\beta\right)$ satisfies
the assumption of the previous section needed to prove the CLT. 

Under the assumption that $\left(\alpha,\beta\right)\in\tilde{X}$,
the orbit $\hat{G}^{n}\left(\alpha,\beta\right)$ of the point $\left(\alpha,\beta\right)$
under the transformation $\hat{G}$ defined in (\ref{eq:Ghat def})
is infinite and one can consider its\emph{ itinerary} with respect
to the partition $\left\{ X_{G,}X_{B_{-}},X_{B_{+}}\right\} $ defined
in Section \ref{subsec:Continued-fraction-renormalization}: the itinerary
is the sequence $\left(s_{n}\right)_{n}\in\mathcal{S}^{\mathbb{N}\cup\text{\ensuremath{\left\{  0\right\} } }},$
where $\mathcal{S}:=\left\{ G,B_{-},B_{+}\right\} ,$ defined by

\begin{equation}
s=J\in\left\{ G,B_{-},B_{+}\right\} \iff\hat{G}^{n}\left(\alpha,\beta\right)\in X_{J},\quad n=0,1,2,....\label{eq: Itinerary wrt (G,B)}
\end{equation}
We will call $\mathcal{S}:=\left\{ G,B_{-},B_{+}\right\} $ the set
of \emph{states} and we will say that $s\left(\alpha,\beta\right):=\left(s_{n}\right)_{n}\in\mathcal{S}^{\mathbb{N}\cup\text{\ensuremath{\left\{  0\right\} } }}$
the infinite sequence of \emph{states} associated to $\left(\alpha,\beta\right)\in\tilde{X}$.
From the definitions in Section \ref{subsec:Continued-fraction-renormalization},
$s_{n}=G$ (or $B_{-},B_{+}$ respectively) if and only if the incidence
matrix $A_{n}$ is of the form (\ref{eq: Transition mat 1}) (or (\ref{eq: Transition mat 2}),
(\ref{eq: Transition mat 3}) respectively). It can be easily deduced
from the description of the renormalization procedure that not all
sequences in $\mathcal{S}^{\bbN}$ are images of some pair $\left(\alpha,\beta\right)\in\tilde{X}$.
The sequences $s\in\mathcal{S}^{\mathbb{N}\cup\text{\ensuremath{\left\{  0\right\} } }}$
such that $s=s\left(\alpha,\beta\right),$ for some $\left(\alpha,\beta\right)\in\tilde{X}$)
form a stationary Markov compactum $\tilde{\mathcal{S}}$$\subset\mathcal{S}^{\mathbb{N}\cup\text{\ensuremath{\left\{  0\right\} } }}$
with state space determined by the graph, 
\[
\xymatrix{G\ar@/^{.7pc}/[r]\ar@(ul,dl) & B_{+}\ar@/^{.7pc}/[l]\ar@/^{.7pc}/[r] & B_{-}\ar@/^{.7pc}/[l]}
\]
namely $s=\left(s_{n}\right)_{n}\in\tilde{\mathcal{S}}$ if and only
if for any $n\geq0$ there is an oriented edge from the state $s_{n}\in\mathcal{S}$
to the state $s_{n+1}\in\mathcal{S}$ in the graph above. 

Since at this point we are interested solely in positivity of the
incidence matrices and not in the values themselves, we define a function
$F:\mathcal{S}\rightarrow M_{3}\left(\bbZ\right)$, where $M_{3}\left(\bbZ\right)$
are $3\times3$ matrices, by 

\begin{align}
F\left(s\right) & =\begin{bmatrix}0 & 1 & 1\\
1 & 0 & 1\\
0 & 1 & 0
\end{bmatrix}\qquad\text{if \quad s=G;}\label{eq: The function F(s)}\\
F\left(s\right) & =\begin{bmatrix}1 & 0 & 1\\
0 & 1 & 0\\
1 & 0 & 0
\end{bmatrix}\qquad\text{if \quad\ensuremath{s}=\ensuremath{B_{-}};}\nonumber \\
F\left(s\right) & =\begin{bmatrix}1 & 0 & 1\\
1 & 1 & 0\\
1 & 0 & 0
\end{bmatrix}\qquad\text{if \quad\ensuremath{s}=\ensuremath{B_{+}}.}\nonumber 
\end{align}

Note that $F$ is defined in such a way, so that some entry of the
matrix $F\left(s_{n}\right)$ is $1$, if and only if the corresponding
entry of incidence matrix $A_{n}$ which corresponds to the state
$s\left(\alpha,\beta\right):=\left(s_{n}\right)_{n}$ has a non-zero
value, independently of $a_{n}$ and $b_{n}$ (for example $a_{n}$
and $a_{n}-b_{n}+1$ are always greater than $1$ or \textbf{$b_{n}\geq1$}
when $\left(\alpha,\beta\right)\in G$). Note that the other implication
is not necessarily true, namely some entries of $F(s)$ could be $0$
even if the corresponding entry of the incidence matrices are positive
(in such cases the positivity depends on the values of $a_{n}$ and
$b_{n}$, for example $a_{n}-b_{n}$ is zero if $a_{n}=b_{n}$). Thus,
for any $n,k\in\bbN\bigcup\left\{ 0\right\} $, 

\[
F\left(s_{n+k}\right)F\left(s_{n+k-1}\right)...F\left(s_{n}\right)>0\implies A_{n+k}A_{n+k-1}...A_{n}>0.
\]
It immediately follows from the topology of the transition graph that
every itinerary $s\in\mathcal{S}^{\bbN\cup\left\{ 0\right\} }$ can
be written in the form 
\begin{equation}
s=W_{1}\left(B_{-}B_{+}\right)^{n_{1}}W_{2}\left(B_{-}B_{+}\right)^{n_{2}}...W_{k}\left(B_{-}B_{+}\right)^{n_{k}}...\label{eq: Itinerary decompoistion 1}
\end{equation}
where $W_{k}$, $k\in\bbN$ are words in the alphabet $\mathcal{S}$
which do not contain $B_{-}$ (i.e. they are words in $G$ and $B_{+})$,
and $W_{k}$ is not empty for $k\geq2$. Note that it may be that
the number of appearances of $B_{-}$ in the above representation
is finite. This means that there exists $K$ such that $n_{k}=0$
for $k\geq K$ and in this case the above representation reduces to
\begin{equation}
s=W_{1}\left(B_{-}B_{+}\right)^{n_{1}}W_{2}\left(B_{-}B_{+}\right)^{n_{2}}...W_{K}\left(B_{-}B_{+}\right)^{n_{K}}W_{K+1}\label{eq: Itinerary decomposition 2}
\end{equation}
 where the length of $W_{K+1}$ is infinite. 
\begin{defn}
Let $\left(\alpha,\beta\right)\in\tilde{X}$. We say that $\beta$\textit{
is of Ostrowski bounded type with respect to} $\alpha$ if the decomposition
of $s\left(\alpha,\beta\right)\in\mathcal{S^{\mathbb{N}}}$ given
by (\ref{eq: Itinerary decompoistion 1}) or (\ref{eq: Itinerary decomposition 2})
satisfies $\sup\left\{ n_{k}\right\} =M<\infty$, where the supremum
is taken over $k\in\bbN$ in the first case, and over $k\in\left\{ 1,...,K\right\} $
in the second case. We say in both cases that $\beta$\textit{ is
of Ostrowski bounded type of order $M$.}
\end{defn}
\begin{prop}
\label{prop: Grouping Prop}Let $\beta$ be of Ostrowski bounded type
of order $M$ with respect to $\alpha$ and let $(A_{i})_{i}$ bee
i the sequence of incidence matrices associated to $(\alpha,\beta)$
by the Ostrowski renormalization. Then for any $k$, and any $n\geq5M$,
we have that $A_{k+n}A_{k+n-1}...A_{k}>0$.
\end{prop}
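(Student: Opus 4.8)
The plan is to reduce the positivity of the product of incidence matrices to positivity of the product of the corresponding $F$-matrices from \eqref{eq: The function F(s)}, using the observation already recorded in the text that $F\left(s_{n+k}\right)\cdots F\left(s_{n}\right)>0$ implies $A_{n+k}\cdots A_{n}>0$. So it suffices to prove the following purely combinatorial claim: if the itinerary $s\left(\alpha,\beta\right)$ is of Ostrowski bounded type of order $M$, then for every $k$ and every $n\geq 5M$ the product $F\left(s_{k+n}\right)\cdots F\left(s_{k}\right)$ is strictly positive. Because the decomposition \eqref{eq: Itinerary decompoistion 1}--\eqref{eq: Itinerary decomposition 2} and the transition graph only allow blocks $\left(B_{-}B_{+}\right)^{n_j}$ with $n_j\leq M$ separated by nonempty words $W_j$ in the letters $\{G,B_{+}\}$, any window of length $5M$ in the itinerary must contain a substantial "good" portion; the goal is to quantify this and then compute.

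First I would establish the elementary positivity lemmas for the individual and short products of the matrices
\[
F(G)=\begin{bmatrix}0&1&1\\1&0&1\\0&1&0\end{bmatrix},\qquad F(B_{-})=\begin{bmatrix}1&0&1\\0&1&0\\1&0&0\end{bmatrix},\qquad F(B_{+})=\begin{bmatrix}1&0&1\\1&1&0\\1&0&0\end{bmatrix}.
\]
The key computations I expect to need are: (i) a bounded-length word in $\{G,B_{+}\}$ whose product is strictly positive — e.g. one checks that $F(G)^{3}>0$, and more generally that any product of three or four of these matrices that contains enough $G$'s (or the pattern forced inside a $W_j$) is positive; (ii) that $F(B_{-}B_{+})=F(B_{-})F(B_{+})$ and its powers, though not positive themselves (they are reducible, reflecting that $B$ steps are "bad"), become positive after being multiplied on one side by a positive matrix; and (iii) that multiplying a strictly positive $3\times 3$ matrix by any of $F(G),F(B_{-}),F(B_{+})$ on either side keeps it strictly positive, since each of these matrices has no zero row and no zero column. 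Fact (iii) is what makes the argument robust: once we produce a single positive factor somewhere in the middle of the window, the whole product is positive.

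Next I would do the counting. Fix $k$ and $n\geq 5M$ and look at the sub-word $\sigma:=s_{k}s_{k+1}\cdots s_{k+n}$ of length $n+1$. Using the decomposition into $W_j\left(B_{-}B_{+}\right)^{n_j}$ with $\sum$ of the $\left(B_{-}B_{+}\right)$-blocks contributing at most $2M$ letters for every $M+1$ consecutive... more carefully: within any window, the total number of letters lying in $\left(B_{-}B_{+}\right)$-blocks is controlled because each such block has length $\leq 2M$ and is followed by a nonempty $W_j$. A clean way is: partition $\sigma$ according to the blocks it meets; since each $\left(B_{-}B_{+}\right)^{n_j}$ has length $2n_j\leq 2M$ and consecutive blocks are separated by at least one $G$ or $B_{+}$, a window of length $5M$ either (a) contains an entire $W_j$ flanked on both sides, giving us a bounded positive sub-product by (i)–(ii), or (b) is contained in at most two $\left(B_{-}B_{+}\right)$-blocks together with the short word between them, but then $5M > 2M+2M$ forces the middle word $W_j$ to be nonempty and in fact to have length $\geq 1$, and one checks by the explicit $3\times 3$ computation that $F(B_{\pm})^{a}F(J)F(B_{\pm})^{b}>0$ for $J\in\{G,B_{+}\}$ and all $a,b\leq M$, possibly after absorbing a couple more adjacent factors. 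I would tabulate the finitely many "shapes" a window of length $\geq 5M$ can have relative to the block structure, verify positivity of the corresponding matrix product in each shape (a finite check reducing to a handful of $3\times 3$ products), and then invoke (iii) to extend to the full window.

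The main obstacle I anticipate is pinning down the exact combinatorial case analysis so that the constant $5M$ genuinely works — in particular handling the extreme case $W_j$ of length exactly one sitting between two maximal $\left(B_{-}B_{+}\right)^{M}$ blocks, since then the window $5M$ just barely covers $\left(B_{-}B_{+}\right)^{M}\,J\,\left(B_{-}B_{+}\right)^{M}$ (length $4M+1$) plus a little slack, and one must verify that $F(B_{-}B_{+})^{M}F(J)F(B_{-}B_{+})^{M}>0$. This reduces to understanding the zero-patterns of the powers $F(B_{-}B_{+})^{m}$ and checking that inserting a single $F(G)$ or $F(B_{+})$ "mixes" the two reducible pieces; I'd verify this by identifying the invariant coordinate subspaces of $F(B_{-}B_{+})$ and noting that $F(G)$ and $F(B_{+})$ do not preserve them. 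Everything else — the reduction to $F$-matrices, the "no zero row/column" stability, the handling of the infinite-$W_{K+1}$ tail case in \eqref{eq: Itinerary decomposition 2}, where after finitely many steps the itinerary lives entirely in $\{G,B_{+}\}$ and short products are positive — is routine once these finite $3\times 3$ positivity facts are in hand.
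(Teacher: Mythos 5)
Your proposal follows essentially the same route as the paper's proof: reduce positivity of products of the incidence matrices $A_{n}$ to positivity of products of the corresponding $F$-matrices, decompose a window of length $\geq5M$ of the itinerary relative to the block structure $W_{1}\left(B_{-}B_{+}\right)^{n_{1}}W_{2}\cdots$, and finish with finitely many explicit $3\times3$ checks together with the observation that multiplying a strictly positive matrix by a matrix with no zero row and no zero column preserves strict positivity. Two concrete details need correcting, however. First, $F(G)^{3}$ is \emph{not} strictly positive: its $(3,1)$ entry vanishes (one has $F(G)^{4}>0$, and the paper in fact verifies positivity for admissible $B_{-}$-free words of length five). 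Second, the case you single out as the main obstacle --- a window of the shape $\left(B_{-}B_{+}\right)^{M}J\left(B_{-}B_{+}\right)^{M}$ with $J$ a single letter, which would require the $M$-dependent verification $F\left(B_{-}B_{+}\right)^{M}F(J)F\left(B_{-}B_{+}\right)^{M}>0$ --- never occurs: the transition graph forbids the step $G\to B_{-}$, so every interior word $W_{j}$ must start with $G$ and end with $B_{+}$, hence has length at least two. This is exactly the observation the paper exploits to avoid any analysis of the powers of $F(B_{+})F(B_{-})$: a window of length $>5M$ either contains a $B_{-}$-free block of length five, or contains a block of the form $B_{-}B_{+}W_{j}B_{-}$, whose initial five letters begin with $B_{-}B_{+}G$; both cases reduce to a \emph{fixed} finite list of length-five products, after which the no-zero-row/column stability extends positivity to the whole window. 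With these two corrections your argument closes and coincides with the paper's.
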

\begin{proof}
Let $W_{1}\left(B_{-}B_{+}\right)^{n_{1}}W_{2}\left(B_{-}B_{+}\right)^{n_{2}}...W_{k}\left(B_{0}B_{1}\right)^{n_{k}}...$
the decomposition of $s\left(\alpha,\beta\right)$ described above.
Direct calculation gives that the product of matrices which corresponds
to an admissible word of length $5$ (or more) which does not contain
$B_{-}$ is strictly positive. Also, any word of length $5$ which
starts with $B_{-}B_{+}G$ gives a transition matrix which is strictly
positive. Note that it follows from the transition graph that each
$W_{i}$, $i\geq2$ must start with $G$ and must be of length strictly
greater than $1$. Since any subword of length greater than $5M$
must contain a block of the form $B_{-}B_{+}W_{i}B_{-}$, or a block
of length at least $5$ where there is no occurrence of $B_{-}$,
the claim follows.
\end{proof}
\begin{lem}
\label{lem: badly approximable implies Ostrowski bounded}If $0<\alpha<\frac{1}{2}$
is badly approximable and $\beta\in\left(0,1\right)$ is badly approximable
with respect to $\alpha$, then the pair $\left(\alpha_{0},\beta_{0}\right),$
related to $\left(\alpha,\beta\right)$ via equations (\ref{eq:Correcpondence between alpha and alpha_zero})
and (\ref{eq: Correcspondence between beta and beta_zero}), satisfies
$\left(\alpha_{0},\beta_{0}\right)\in\tilde{X}$ and $\beta_{0}$
is of Ostrowski bounded type with respect to $\alpha_{0}$.
\end{lem}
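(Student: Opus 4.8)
The plan is to prove the two assertions separately: the membership $(\alpha_0,\beta_0)\in\tilde X$ will follow at once from \eqref{eq:BAbeta}, and the Ostrowski-bounded-type property will be derived from a uniform positive lower bound for the rescaled break points $|\beta_n|$. For the membership: if $\beta\equiv n\alpha\bmod 1$ for some $n\in\mathbb{Z}$, then $n\neq 0$ because $\beta\in(0,1)$, and the choice $q=n$, $p=q\alpha-\beta$ contradicts \eqref{eq:BAbeta}; hence $\beta\notin\{n\alpha\bmod 1\}$ and $(\alpha_0,\beta_0)\in\tilde X$ by \eqref{eq:Xtilde def} (equivalently, by Remark \ref{rem: Infiniteness of Ostrowsky expansion}, $\beta_0$ has an infinite Ostrowski expansion). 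The same observation shows $\beta_n\neq 0$ for every $n$ --- otherwise \eqref{eq: Remainder in Ostrowski expansion} and Remark \ref{rem: Ostrowski Remark} would place $\beta_0$ in the $T_{\alpha_0}$-orbit of $0$, hence $\beta$ in $\{n\alpha\bmod 1\}$ --- and I may assume $\beta_0\neq 0$, since if $\beta_0=0$ then \eqref{eq: Ostrowski beta_n} gives $\beta_n\equiv 0$, so $s_n=B_+$ for all $n$, the itinerary contains no block $B_-B_+$, and $\beta_0$ is of Ostrowski bounded type by default.

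The core of the argument is the claim that there is $\delta>0$ with $|\beta_n|\geq\delta$ for all $n$. First, since $\alpha$ is badly approximable, its continued fraction entries, and hence (by \eqref{eq:Correcpondence between alpha and alpha_zero}) those of $\alpha_0$, are bounded by some $a_{max}$, and the classical estimate $\alpha^{(n-1)}=\alpha_0\cdots\alpha_{n-1}\asymp 1/q_n$ holds (up to the fixed factor $1/(1-\alpha)$, $\alpha^{(n-1)}$ is one of the quantities $\|q_m\alpha\|$, which lie in $(\tfrac{1}{q_{m+1}+q_m},\tfrac{1}{q_{m+1}})$); write $\alpha^{(n-1)}\leq C_1/q_n$, so $\alpha^{(n-1)}\to 0$. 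By \eqref{eq: Remainder in Ostrowski expansion}, $|\beta^{(n)}|=\alpha^{(n-1)}|\beta_n|=\big|\beta_0-\sum_{k=0}^{n-1}x^{(k)}\big|$, while by Remark \ref{rem: Ostrowski Remark} the partial sum $\sum_{k=0}^{n-1}x^{(k)}$ equals $T_{\alpha_0}^{i_n}(0)$ for some $0\leq i_n<q_{n-1}+q_{n-2}\leq 2q_n$, or equals $\alpha_0$. If this point is $0$, $-1$ or $\alpha_0$, then $|\beta^{(n)}|\geq\delta_0:=\min(|\beta_0|,\beta_0+1,\alpha_0-\beta_0)>0$ ($\beta_0>-1$ since $\beta>0$, $\beta_0<\alpha_0$, and $\beta_0\neq 0$), which contradicts $|\beta^{(n)}|=\alpha^{(n-1)}|\beta_n|\leq\alpha^{(n-1)}\to 0$ for $n$ large; so $i_n\geq 2$ for all large $n$. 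For those $n$ I would invoke the affine conjugacy $\psi(x)=(\alpha+1)x-1$ between $R_\alpha$ and $T_{\alpha_0}$, for which $\psi(\beta)=\beta_0$ by \eqref{eq: Correcspondence between beta and beta_zero} and $T_{\alpha_0}^{i}(0)=\psi\big((i-1)\alpha\bmod 1\big)$ for $i\geq 1$ (since $T_{\alpha_0}(0)=-1=\psi(0)$); as $|\beta^{(n)}|$ is small, real and circular distances agree and
\[
|\beta^{(n)}|=(\alpha+1)\,\|q\alpha-\beta\|,\qquad q:=i_n-1,\quad 1\leq q<2q_n.
\]
Now \eqref{eq:BAbeta} gives $\|q\alpha-\beta\|>C/q>C/(2q_n)$, hence $|\beta_n|=|\beta^{(n)}|/\alpha^{(n-1)}>(\alpha+1)C/(2C_1)$ for $n$ large; together with $\beta_n\neq 0$ for the finitely many remaining $n$, the bound $\delta$ exists.

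To conclude, let a maximal run of consecutive $B$-steps of the itinerary occupy indices $n,\dots,n+\ell-1$; by the decomposition \eqref{eq: Itinerary decompoistion 1}, bounding all the exponents $n_k$ is equivalent to bounding all such $\ell$. Along a $B$-step \eqref{eq: Ostrowski beta_n} reads $\beta_{j+1}=-\beta_j/\alpha_j$, so $|\beta_n|=|\beta_{n+\ell}|\prod_{j=n}^{n+\ell-1}\alpha_j\leq\prod_{j=n}^{n+\ell-1}\alpha_j$ (using $|\beta_{n+\ell}|\leq 1$), and since $\alpha_j=1/(a_j+\alpha_{j+1})\leq(a_{max}+1)/(a_{max}+2)=:c_2<1$ for $j\geq 1$ (and $\alpha_0<1$ as $\alpha<\tfrac12$), this gives $|\beta_n|\leq c_2^{\ell-1}$. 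Combined with $|\beta_n|\geq\delta$, this forces $\ell\leq 1+\log_{c_2}\delta$, a bound depending only on $(\alpha,\beta)$; hence $\sup_k n_k<\infty$ and $\beta_0$ is of Ostrowski bounded type with respect to $\alpha_0$. The hardest step, and the real content, is the middle one: converting the dynamical smallness of $|\beta_n|$ along a long run into an approximation of $\beta$ by $q\alpha$ with error $O(q_n^{-1}c_2^{\ell})$ and denominator $q=O(q_n)$ that \eqref{eq:BAbeta} forbids. This relies on the identification of the Ostrowski remainders with distances $\|q\alpha-\beta\|$ through $\psi$, the bound on $i_n$ from Remark \ref{rem: Ostrowski Remark}, the comparison $\alpha^{(n-1)}\asymp 1/q_n$, and the disposal of the few degenerate cases where the Ostrowski partial sum is an endpoint of $I^{(0)}$; the remaining estimates are routine.
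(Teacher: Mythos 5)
Your proof is correct and follows essentially the same route as the paper's: both arguments identify the Ostrowski partial sum with an orbit point $\psi(q\alpha \bmod 1)$ with $q\lesssim q_n$, bound the remainder $|\beta^{(n)}|$ from above by $\alpha^{(n+\ell-1)}$ along a run of $B$-states and from below via \eqref{eq:BAbeta}, and use bad approximability of $\alpha$ to compare the two scales. The only difference is organizational: you isolate the uniform lower bound $|\beta_n|\ge\delta$ as an intermediate claim and then run the explicit recursion $\beta_{j+1}=-\beta_j/\alpha_j$ with $\alpha_j\le c_2<1$, whereas the paper argues directly by contradiction from a run of length $M$ via $|\beta-(k-1)\alpha-p|\le C/q_{n+M}$ with $k\le q_n+q_{n-1}$ --- the same computation, since $\alpha^{(n+\ell-1)}/\alpha^{(n-1)}=\prod_{j=n}^{n+\ell-1}\alpha_j$.
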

\begin{proof}
Let $\sum_{k=0}^{\infty}x^{\left(k\right)}$ be the Ostrowski expansion
of $\beta_{0}$ in terms of $\alpha_{0}$ given by Proposition \ref{prop: Ostrowsky expansion}.
Then by Remark \ref{rem: Ostrowski Remark} $\sum_{k=0}^{n}x^{\left(k\right)}\in\left\{ T_{\alpha_{0}}^{j}\left(0\right):\ 0\leq j\leq q_{n-1}+q_{n}\right\} \bigcup\left\{ \alpha_{0}\right\} $
where $q_{n}$ are the denominators of the $n^{th}$ convergent in
the continued fraction expansion of $\alpha$. Since under the conjugacy
$\psi$ between $T_{\alpha_{0}}$ and $R_{\alpha}$ (where both maps
are viewed as rotations on a circle), the (equivalence class of the)
points $0$ and $\alpha_{0}$ in the domain of $T_{\alpha_{0}}$ correspond
respectively to the (equivalence class of) points $1-\alpha$ and
$1$ in the domain of $R_{\alpha}$, we obtain that $\psi^{-1}\left(\sum_{k=0}^{n}x^{\left(k\right)}\right)\in\left\{ R_{\alpha}^{j}\left(1-\alpha\right):\ 0\leq j\leq q_{n-1}+q_{n}-1\right\} $.
It follows that the Ostrowski expansion of $\beta_{0}$ is infinite,
since otherwise, if there exists an $n$ such that $\beta_{0}=\sum_{k=0}^{n}x^{\left(k\right)}$,
we would get that $\beta=\psi^{-1}\left(\beta_{0}\right)=1-\alpha+j\alpha\mod1$
for some $j\in\bbN\bigcup\left\{ 0\right\} $, which obviously contradicts
(\ref{eq:BAbeta}). Thus, $\left(\alpha_{0},\beta_{0}\right)\in\tilde{X}$. 

Fix $M\in\bbN$ and let $s=s\left(\alpha_{0},\beta_{0}\right)$ be
defined by (\ref{eq: Itinerary wrt (G,B)}). We claim that, if for
some $n\in\bbN$, $s_{n+i}\in\left\{ B_{-},B_{+}\right\} $ for all
$1\leq i\leq M$, then there exist a constant $C$, which does not
depend on $n$, and $0\leq k\leq q_{n}+q_{n-1}$, $p\in\bbZ$, such
that 
\begin{equation}
\left|\beta-\left(k-1\right)\alpha-p\right|\leq\frac{C}{q_{n+M}}.\label{eq:contradiction bounded M}
\end{equation}
 The second assertion of the Lemma follows immediately from this and
the fact that $q_{n+M}\rightarrow\infty$ as $M$ tends to $\infty$. 

To see that the claim holds, suppose that $s_{n+i}\in\left\{ B_{-},B_{+}\right\} $
for all $0\leq i\leq M$. Recalling the description of the renormalization
procedure in section \ref{subsec:Continued-fraction-renormalization},
this is equivalent to $x^{\left(n+i\right)}=0$ for all $0\leq i\leq M$,
so that$\sum x^{\left(k\right)}=\sum_{k=0}^{n+M}x^{\left(k\right)}$.
Thus, by the estimate of the reminder in an Ostrowski expansion given
by Proposition \ref{prop: Ostrowsky expansion}, we obtain that
\[
\left|\beta_{0}-\sum_{k=0}^{n}x^{\left(k\right)}\right|=\left|\beta_{0}-\sum_{k=0}^{n+M}x^{\left(k\right)}\right|\leq\left|\beta^{\left(n+M+1\right)}\right|\leq\alpha^{\left(n+M\right)}.
\]
Since $\alpha$ is badly approximable, $\alpha^{\left(n\right)}=\mathcal{G}^{n}\left(\alpha\right)\leq\frac{C}{q_{n}}$
for all $n$, where $C$ is a constant which depends only on $\alpha$.
Since the conjugacy map $\psi$ is affine, the previous inequality
yields that there exists a constant $C$, such that 
\[
\left|\beta-\psi^{-1}\left(\sum_{k=0}^{n}x^{\left(k\right)}\right)\right|\leq\frac{C}{q_{n+M}}.
\]
Since $\psi^{-1}\left(\sum_{k=0}^{n}x^{\left(k\right)}\right)\in\left\{ R_{\alpha}^{j}\left(1-\alpha\right):\ 0\leq j<q_{n-1}+q_{n}\right\} $,
we obtain that 
\[
\psi^{-1}\left(\sum_{k=0}^{n}x^{\left(k\right)}\right)=1-\alpha+k\alpha+p
\]
where $0\leq k<q_{n}+q_{n-1}$, and $p\in\bbZ$. Thus, combining the
last two equations, we proved (\ref{eq:contradiction bounded M}).
This completes the proof of the Lemma. 
\end{proof}
Let $0<\alpha<\frac{1}{2}$ be badly approximable, let $\beta\in\left(0,1\right)$
be badly approximable with respect to $\alpha$ and let $\left(\alpha_{0},\beta_{0}\right)$
be related to $\left(\alpha,\beta\right)$ via equations (\ref{eq:Correcpondence between alpha and alpha_zero})
and (\ref{eq: Correcspondence between beta and beta_zero}). Since
by the previous proposition $\left(\alpha_{0},\beta_{0}\right)\in\tilde{X}$,
the sequence of transition matrices $p^{\left(n\right)}$ associated
to the pair $\left(\alpha_{0},\beta_{0}\right)$ given by Definition
\ref{def: Markov transitions} is well defined. Recall that $\tau\left(P\right)$, where $P$ is a stochastic matrix, denotes the contraction coefficient
defined by (\ref{eq: Contraction coefficient}). 
\begin{cor}
\label{cor: Positivity of contraction coefficient for bbd type}Let
$0<\alpha<\frac{1}{2}$ be badly approximable, $\beta\in\left(0,1\right)$
be badly approximable with respect to $\alpha$ and let $\left(\alpha_{0},\beta_{0}\right)$
be related to $\left(\alpha,\beta\right)$ via equations (\ref{eq:Correcpondence between alpha and alpha_zero})
and (\ref{eq: Correcspondence between beta and beta_zero}). Then
if $p^{\left(n\right)}$ is the sequence of transition matrices associated
to $\left(\alpha_{0},\beta_{0}\right)$ (see Definition \ref{def: Markov transitions}),
there exist $M\in\bbN$, and $0\leq\delta<1$, such that 
\[
\tau\left(p^{\left(n+M\right)}\cdot p^{\left(n+M-1\right)}\cdot...\cdot p^{\left(n\right)}\right)<\delta\qquad\text{for\, all}\ n\in\bbN.
\]
 
\end{cor}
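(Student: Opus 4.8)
The plan is to deduce the estimate from the positivity of products of incidence matrices (Proposition~\ref{prop: Grouping Prop}), combined with the explicit form of the transition matrices in Definition~\ref{def: Markov transitions} and the uniform control on ratios of tower heights at nearby levels from \eqref{eq: Inequality for heights of towers}. First I would invoke Lemma~\ref{lem: badly approximable implies Ostrowski bounded}: since $0<\alpha<\frac{1}{2}$ is badly approximable and $\beta$ is badly approximable with respect to $\alpha$, the associated pair $(\alpha_0,\beta_0)$ lies in $\tilde X$ and $\beta_0$ is of Ostrowski bounded type with respect to $\alpha_0$, say of order $M_0$. Then Proposition~\ref{prop: Grouping Prop} yields $A_{n+N}A_{n+N-1}\cdots A_n>0$ entrywise for every $n$ and every $N\ge 5M_0$, and I would fix once and for all $M:=5M_0+1$, so that $A_{n+M-1}A_{n+M-2}\cdots A_n>0$ for every $n\in\bbN$.

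The core step is transferring this positivity to the transition matrices $p^{(i)}$. Recall that the entry $p^{(m)}_{(J,j),(K,k)}$ is nonzero exactly when $K=(\tau_m(J))_j$ and $k$ is an admissible position index of a subtower of $Z_K^{(m)}$, in which case it is a ratio $h^{(m-1)}_{K'}/h^{(m)}_{K}$ of tower heights at two consecutive levels, with $K'\in\{L,M,S\}$. Two facts are then used: (i) along a transition, the \emph{letter} of the new state is forced to be the $j$-th letter of $\tau_m(J)$, while its \emph{position index} may be any admissible one; (ii) by \eqref{eq: Inequality for heights of towers} applied to heights at two consecutive levels, there is a constant $c>0$, independent of $m$, so that every nonzero entry of every $p^{(m)}$ lies in $[c,1]$. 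Now fix $n$ and a state $(J,j)\in\mathcal{S}_{n+M+1}$. Applying $p^{(n+M)}$ sends it to a state with letter $J^{(1)}:=(\tau_{n+M}(J))_j$ and free position index; in the remaining $M$ transitions each new letter is free to be any letter occurring in the relevant substitution word, so reaching a prescribed letter $K$ at level $n$ amounts exactly to $(A_{n+M-1}A_{n+M-2}\cdots A_n)_{J^{(1)},K}>0$, which holds by the choice of $M$. Choosing the intermediate position indices along such a chain of letters (and the final position index to be any prescribed admissible $k$) produces a path of positive probability from $(J,j)$ to $(K,k)$ made of $M+1$ transitions, each contributing a factor $\ge c$. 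Hence every entry of $P:=p^{(n+M)}p^{(n+M-1)}\cdots p^{(n)}$ is $\ge c^{M+1}=:\epsilon>0$, uniformly in $n$.

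To conclude I would use the elementary identity, valid for a stochastic matrix $P$, that $\frac{1}{2}\sum_s|P_{x_1,s}-P_{x_2,s}|=1-\sum_s\min(P_{x_1,s},P_{x_2,s})$; since $\sum_s\min(P_{x_1,s},P_{x_2,s})\ge|\mathcal{S}_n|\,\epsilon\ge\epsilon$, this gives $\tau(P)\le 1-\epsilon<1$, so any $\delta\in(1-\epsilon,1)$ works, and it is independent of $n$ as required.

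The only delicate point is the bookkeeping in the second paragraph: one must check that the asymmetry of the first transition — where the new letter is \emph{not} free — does not obstruct reachability. This works precisely because after that first step all intermediate position indices become free, so the surviving $M$ steps are governed by a product of $M$ consecutive incidence matrices, whose strict positivity uniformly in $n$ is exactly what Proposition~\ref{prop: Grouping Prop} provides; the uniformity in $n$ of both the lower bound $c$ (from \eqref{eq: Inequality for heights of towers}) and of the incidence positivity is what makes $\delta$ independent of $n$.
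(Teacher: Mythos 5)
Your proposal is correct and follows essentially the same route as the paper: Lemma \ref{lem: badly approximable implies Ostrowski bounded} gives Ostrowski bounded type, Proposition \ref{prop: Grouping Prop} gives positivity of products of incidence matrices, this positivity is transferred to the transition matrices (whose nonzero entries are uniformly bounded below via (\ref{eq: Inequality for heights of towers})), and a stochastic matrix with entries bounded below by $\epsilon$ has contraction coefficient at most $1-\epsilon$. Your bookkeeping of the first (letter-forcing) transition versus the subsequent free ones is in fact slightly more careful than the paper's, and your per-step bound $c^{M+1}$ replaces the paper's observation that each path's probability telescopes to a single ratio of tower heights; both yield the required uniform lower bound.
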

\begin{proof}
Lemma \ref{lem: badly approximable implies Ostrowski bounded} implies
that $\beta_{0}$ is of Ostrowski bounded type. By definition of the
transition matrices $p^{\left(n\right)}$ (see Definition \ref{def: Markov transitions}),
for any $\left(K,k\right)\in \mathcal{S}_{n+M+1}$, $\left(J,j\right)\in \mathcal{S}_{n}$
\[
\left(p^{\left(n+M\right)}\cdot p^{\left(n+M-1\right)}\cdot...\cdot p^{\left(n\right)}\right)_{\left(K,k\right),\left(J,j\right)}>0
\]
if and only if 
\[
\left(A_{n+M}A_{n+M-1}...A_{n}\right)_{\left(\tau_{n+M}\left(K\right)\right)_{k},J}>0.
\]
This should be interpreted as the statement that the probability to
pass from a state $\left(K,k\right)\in\mathcal{S}_{M+n+1}$ to some
state $\left(J,j\right)\in\mathcal{S}_{n}$ is positive if and only
if the intersection of the tower $Z_{J}^{\left(n\right)}$ with the
subtower of $Z_{K}^{\left(n+M+1\right)}$ labelled by $\left(K,k\right)$
is non-empty. Thus, Proposition \ref{prop: Grouping Prop} implies
that there exists $M\in\bbN$ such that $p^{\left(n+M\right)}\cdot...\cdot p^{\left(n\right)}$
is strictly positive for any $n\in\bbN$. From $\alpha$ being badly
approximable (see inequality (\ref{eq: Inequality for heights of towers}))
and by the fact that by definition, every positive entry of $p^{\left(n+M\right)}\cdot...\cdot p^{\left(n\right)}$
is a ratio between the heights of tower at the $\left(n+M\right)^{th}$
and $n^{th}$ stage of the renormalization, it follows that there
exists $\delta>0$ which is independent of $n$, such that every entry
of $p^{\left(n+M\right)}\cdot...\cdot p^{\left(n\right)}$ is not
less than $\delta$. Note that it follows from the definition of the
coefficient $\tau$ (see (\ref{eq: Contraction coefficient})) that
if $P_{n\times m}$ is a stochastic matrix such that there exists
$\delta>0$, for which $P_{i,j}>\delta$, for all $1\leq i\leq n$,
$1\leq j\leq m$, then $\tau\left(P\right)<1-\delta$. Thus, the proof
is complete. 
\end{proof}

\subsection{Growth of the variance\label{subsec:Growth-of-the}}

In this section we consider the random variables $\xi_{k}\left(X_{k}\right),\,k\in\mathbb{N}$,
constructed in Section \ref{subsec: Markov chain-1} (see equation
(\ref{eq: Constructed Random variables}) therein). Recall that the
array is well defined for any given pair of parameters $\left(\alpha_{0},\beta_{0}\right)\in\tilde{X}$
and, by the key Proposition \ref{prop: Connection between temporal sums and MC},
models Birkhoff sums over the transformation $T_{\alpha_{0}}$ of
the function $\varphi$ defined by (\ref{eq: Cocycle function}),
which has a jump at \textbf{$\beta_{0}$.} The goal in the present
section is to show that if $\varphi$ is not a coboundary, then the
variance $Var_{\mu_{n}}\left(\sum_{k=1}^{n}\xi_{k}\left(X_{k}\right)\right)$
tends to infinity as $n$ tends to infinity, where $Var_{\mu_{n}}\left(\sum_{k=1}^{n}\xi_{k}\left(X_{k}\right)\right)$
is the variance of $\sum_{k=1}^{n}\xi_{k}\left(X_{k}\right)$ with
respect to the measure $\mu_{n}$. 

Let us first recall the definition of tightness and a criterion which
characterizes coboundaries. 
\begin{defn}
Let $\left(\Omega,\mathcal{B},P\right)$ be a probability space. A
sequence of random variables $\left\{ Y_{n}\right\} $ defined on
$\Omega$ and taking values in a Polish space $\mathcal{P}$ is tight
if for every $\epsilon>0$, there exists a compact set $C\subseteq\mathcal{P}$
such that $\forall n\in\bbN$, $P\left(Y_{n}\in C\right)>1-\epsilon$. 
\end{defn}
Let $\left(X,\mathcal{B},m,T\right)$ be a probability preserving
system and let $f:X\rightarrow\bbR$ be a measurable function. We
say that $f$ is a \emph{coboundary} if there exists a measurable
function $g:X\rightarrow\bbR$ such that the equality $f\left(x\right)=g\left(x\right)-g\circ T\left(x\right)$
holds almost surely. Let us recall the following characterization
of coboundaries on $\bbR$ (see \cite{aaronson2000remarks}). 
\begin{thm}
The sequence $\left\{ \sum_{k=0}^{n-1}f\circ T^{k}\right\} $ is tight
if and only if $f$ is a coboundary\label{thm: coboundaries tight}.
\end{thm}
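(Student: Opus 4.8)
The plan is to treat the two implications separately; the implication ``$f$ coboundary $\Rightarrow$ $\{S_n\}$ tight'' is routine and I would dispatch it first. If $f=g-g\circ T$ $m$-a.e.\ with $g$ measurable, telescoping gives $S_n(T,f,x)=g(x)-g(T^nx)$ $m$-a.e.; the constant sequence $\{g\}$ is tight because $g$ is finite a.e., and each $g\circ T^n$ has the same law as $g$ since $T$ preserves $m$, so $\{g\circ T^n\}_n$ is tight as well; passing to the (automatically tight) family of joint laws of $(g,g\circ T^n)$ on $\bbR^2$ and applying the continuous map $(a,b)\mapsto a-b$ shows $\{S_n\}$ is tight.

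For the converse, assuming $\{S_n\}$ tight I would pass to a one-parameter family of weighted Koopman operators. For $s\in\bbR$ let $U_s\colon L^2(X,m)\to L^2(X,m)$ be the unitary operator $U_sh=e^{2\pi i s f}\cdot(h\circ T)$. An easy induction gives $U_s^n\ind=e^{2\pi i s S_n}$, so, writing $\mu_n$ for the law of $S_n$ under $m$ and $\rho_s$ for the spectral measure of the vector $\ind$ relative to $U_s$ (a probability measure on the unit circle $\bbT$),
\[
\widehat{\mu_n}(s)=\int_X e^{2\pi i s S_n}\,dm=\langle U_s^n\ind,\ind\rangle=\int_{\bbT}z^n\,d\rho_s(z).
\]
Tightness forces $\sup_n|\widehat{\mu_n}(s)-1|\to0$ as $s\to0$ (a direct estimate splitting the defining integral at a level $M$ supplied by tightness). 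Averaging the displayed identity over $n=1,\dots,N$ and letting $N\to\infty$, the Fejér sums $\tfrac1N\sum_{n=1}^Nz^n$ converge boundedly to $\ind_{\{z=1\}}$, whence $\tfrac1N\sum_{n=1}^N\widehat{\mu_n}(s)\to\rho_s(\{1\})$; together with the equicontinuity this yields $\rho_s(\{1\})\ge1-\eta(s)$ with $\eta(s)\to0$. Hence there is $s_0>0$ with $\rho_s(\{1\})>0$ for all $s\in(0,s_0)$, i.e.\ $1$ is an eigenvalue of $U_s$. Picking an eigenfunction $h_s$, the relation $U_sh_s=h_s$ makes $|h_s|$ $T$-invariant, hence constant and nonzero by ergodicity; normalizing, $h_s=e^{2\pi i g_s}$ for a measurable $g_s\colon X\to\bbR/\bbZ$, and the eigenvalue equation becomes $sf\equiv g_s-g_s\circ T\pmod1$. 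Lifting $g_s$ to a real function, this says precisely that $f$ is cohomologous to a cocycle with values in $\tfrac1s\bbZ$, for every $s\in(0,s_0)$.

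To finish I would invoke Schmidt's theory of essential values: the essential-value group $E(f)$ is a cohomology invariant, and any cocycle valued in a closed subgroup $H\le\bbR$ has $E\subseteq H$; therefore $E(f)\subseteq\tfrac1s\bbZ$ for every $s\in(0,s_0)$, and since $\bigcap_{0<s<s_0}\tfrac1s\bbZ=\{0\}$ we conclude $E(f)=\{0\}$. It is classical that either $\liminf_n|S_n|=0$ $m$-a.e.\ or $|S_n|\to\infty$ $m$-a.e.; the latter is incompatible with tightness (then $m(|S_n|\le M)\to0$ for each fixed $M$), so $f$ is recurrent, and for a recurrent cocycle $E(f)=\{0\}$ is exactly Schmidt's criterion for $f$ to be a coboundary, which completes the proof.

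I expect the main obstacle to be the middle step: converting the purely probabilistic hypothesis (tightness of $\{S_n\}$) into the cohomological statement that $f$ is cohomologous to a $\tfrac1s\bbZ$-valued cocycle for all small $s$. The identity $\widehat{\mu_n}(s)=\langle U_s^n\ind,\ind\rangle$ combined with the Fejér-averaging argument is what makes this work; once the eigenfunctions $h_s$ are in hand, the passage through essential values is soft. Secondary points requiring care are the measurable choice and real lift of $g_s$, the equivalence ``tight $\Leftrightarrow$ characteristic functions equicontinuous at $0$'', and the standard facts about essential values and the recurrent/transient dichotomy, for all of which I would refer to Schmidt's monograph and to \cite{aaronson2000remarks}.
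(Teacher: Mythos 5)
The paper does not actually prove this statement; it is quoted from \cite{aaronson2000remarks}, so there is no internal proof to compare against. Judged on its own terms, your easy direction is fine, and the first two thirds of the converse are a genuinely nice argument: the identity $\widehat{\mu_n}(s)=\langle U_s^n\ind,\ind\rangle$, the equicontinuity at $s=0$ forced by tightness, and the Fej\'er averaging correctly yield that $1$ is an eigenvalue of $U_s$ for all small $s>0$, i.e.\ that $e^{2\pi i s f}$ is a multiplicative coboundary and hence that $f$ is cohomologous to a $\tfrac1s\bbZ$-valued cocycle for every $s\in(0,s_0)$. (Minor caveat: you implicitly use invertibility of $T$ to make $U_s$ unitary; for a non-invertible endomorphism you would need a Herglotz/dilation argument to produce $\rho_s$.)

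The genuine gap is the last step. ``Recurrent and $E(f)=\{0\}$'' is \emph{not} Schmidt's criterion for being a coboundary; the criterion is $\bar E(f)=\{0\}$ as a subset of $\bbR\cup\{\infty\}$, and one must separately exclude $\infty$ from $\bar E(f)$. Recurrence only rules out $|S_n|\to\infty$ a.e.; it does not rule out $\infty$ being an essential value. Indeed there exist recurrent cocycles over ergodic probability-preserving maps (even step cocycles over irrational rotations, cf.\ the non-regular examples of Aaronson--Lema\'nczyk--Voln\'y, and the cocycle-theoretic counterpart of type $III_0$ constructions) with $E(f)=\{0\}$ which are not coboundaries. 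So passing from your eigenfunction information to the group $E(f)$ throws away exactly the content of the theorem. The step can be repaired without changing your strategy: the set $\Lambda=\{s\in\bbR:\ e^{2\pi i sf}\text{ is a multiplicative coboundary}\}$ is a subgroup of $\bbR$, you have shown $\Lambda\supseteq(0,s_0)$, hence $\Lambda=\bbR$, and the Moore--Schmidt theorem (the case $B=\{0\}$, $B^{\perp}=\widehat{\bbR}$) then gives directly that $f$ is an additive coboundary; equivalently, one can quote the fact that the point spectrum of a properly ergodic nonsingular flow is Lebesgue-null, so a $\Lambda$ of positive measure forces the Mackey range of $f$ to be the translation flow on $\bbR$, i.e.\ $f$ a coboundary. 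Alternatively, the more elementary route (essentially that of \cite{aaronson2000remarks}) avoids spectral theory altogether: tightness makes the Ces\`aro averages of $(T_f^n)_*(m\times\delta_0)$ tight on $X\times\bbR$, a weak limit is a $T_f$-invariant probability with marginal $m$, and a median of its disintegration over $X$ is a transfer function.
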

Set $e_{n}=E_{\mu_{n}}\left(\sum_{k=1}^{n}X_{k}\right)$, $\sigma_{n}=\sqrt{Var_{\mu_{n}}\left(\sum_{k=1}^{n}X_{k}\right)}$.
We will now prove the following lemma. 
\begin{lem}
Assume that there exists a strictly increasing \label{lem:variance tight}sequence
of positive integers $\left\{ n_{j}\right\} _{j=1}^{\infty}$ such
that 
\[
\sup\left\{ \sigma_{n_{j}}:\ j=1,2,...\right\} <\infty.
\]
Then the sequence $\varphi_{n}=\sum_{k=0}^{n-1}\varphi\circ T_{\alpha_{0}}^{k}$
is tight.
\end{lem}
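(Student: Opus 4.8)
The plan is to deduce tightness of the Birkhoff sums $\varphi_n = \sum_{k=0}^{n-1}\varphi\circ T_{\alpha_0}^k$ from the boundedness of the variances $\sigma_{n_j}$ along a subsequence, by exploiting the correspondence between intermediate Birkhoff sums along towers and the Markov chain sums $\sum_{k=1}^n \xi_k(X_k)$ established in Proposition \ref{prop: Connection between temporal sums and MC}. The key observation is that by Chebyshev's inequality, a bound $\sigma_{n_j}\le C_0$ on the standard deviation (together with a bound on $e_{n_j}$, which follows from Proposition \ref{prop: Same order of variances and expectations} or directly from $|\xi_k|$ being bounded and the Denjoy--Koksma estimate \eqref{eq: Denjoy Koksma}) forces the random variables $\sum_{k=1}^{n_j}\xi_k(X_k)$ under $\mu_{n_j}$ (or under $\mu_{n_j}^J$, which differ by a bounded amount by Proposition \ref{prop: Same order of variances and expectations}) to be tight: for every $\epsilon>0$ there is $R$ with $\mu_{n_j}^J(|\sum_{k=1}^{n_j}\xi_k(X_k)|>R)<\epsilon$ for all $j$ and all $J\in\{L,M,S\}$.

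\medskip

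\textbf{The translation step.} First I would use Proposition \ref{prop: Connection between temporal sums and MC} to rewrite this in dynamical terms: for $x_J\in I_J^{(n_j)}$,
\[
\frac{1}{h_J^{(n_j)}}\#\left\{0\le l\le h_J^{(n_j)}-1:\ \Big|\sum_{k=0}^{l}\varphi(T_{\alpha_0}^k(x_J))\Big|>R\right\} = \mu_{n_j}^J\Big(\Big|\sum_{k=1}^{n_j}\xi_k(X_k)\Big|>R\Big)<\epsilon.
\]
Thus, for every base point $x_J$ of a tower at level $n_j$, the proportion of floors $l$ at which the intermediate Birkhoff sum exceeds $R$ in absolute value is less than $\epsilon$. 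Since $I^{(0)}=[-1,\alpha_0)$ is partitioned into the three towers $Z_L^{(n_j)},Z_M^{(n_j)},Z_S^{(n_j)}$ (Proposition \ref{prop: Tower Structure}), and a point $T_{\alpha_0}^l(x)$ for generic $x$ is at floor $l$ of the tower over the base point $x_J$ containing $x$, this gives a bound on $\lambda\{y\in[-1,\alpha_0):\ |\varphi_{m}(y)|>R+\text{const}\}$ for all heights $m$ that arise as $h_J^{(n_j)}$ — up to the bounded correction coming from expressing a general Birkhoff sum $\varphi_m(y)$ (for $y$ in the interior of a tower) as an intermediate Birkhoff sum from the base, which costs at most one extra special Birkhoff sum, bounded by \eqref{eq: Denjoy Koksma}.

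\medskip

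\textbf{From a subsequence of heights to all times.} The remaining issue — and the step I expect to be the main obstacle — is that the estimate above only controls $\varphi_m$ for $m$ ranging over the finite set of tower heights $\{h_J^{(n_j)}:\ J\in\{L,M,S\},\ j\in\bbN\}$, whereas tightness requires a uniform bound over \emph{all} $m\in\bbN$. Here I would use the standard decomposition of a Birkhoff sum of arbitrary length into a bounded number of special Birkhoff sums: given $m$, choose $j$ with $h_L^{(n_j)}\le m$, and peel off special Birkhoff sums over towers at levels $\le n_j$ greedily, writing $\varphi_m(y) = \sum(\text{at most boundedly many }\varphi_J^{(i)}) + \varphi_{m'}(y')$ with $m' < h_L^{(n_j)}$; iterating and using \eqref{eq: Denjoy Koksma} together with the bounded-type hypothesis on $\alpha$ (so that at each renormalization step $0\le b_n\le a_n\le a_{\max}$, hence only $O(a_{\max})$ special sums are peeled at each of the $O(1)$ levels needed to reduce the length by a definite factor, cf. \eqref{eq: Heights of towers growth}--\eqref{eq: Inequality for heights of towers}) shows $\sup_y|\varphi_m(y)|$ can grow by only an additive constant per renormalization level and the number of levels needed is controlled. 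More precisely, using that $h_J^{(n_j)}$ and $h_K^{(n_j+1)}$ are comparable up to a factor $M$ independent of $j$ by \eqref{eq: Inequality for heights of towers}, one obtains that every $m$ lies within a bounded multiplicative window of some $h_J^{(n_j)}$, and correcting for the difference costs only $O(1)$ extra special Birkhoff sums; each contributes at most $\sup|\varphi_J^{(i)}|<\infty$ by \eqref{eq: Denjoy Koksma}. Hence $\lambda\{y:\ |\varphi_m(y)|>R+C_1\}<\epsilon$ for all $m\in\bbN$, with $C_1$ an absolute constant, which is exactly tightness of $\{\varphi_m\}$. The delicate point to get right is ensuring the number of correction terms is genuinely bounded uniformly in $m$ — this is where badly-approximable-ness of $\alpha$ enters, via \eqref{eq: Heights of towers growth} giving exponential, uniformly-spaced growth of the heights.
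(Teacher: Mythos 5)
Your first step --- Chebyshev/Markov applied to $\sum_{k=1}^{n_j}\xi_k\left(X_k\right)$ and the translation via Proposition \ref{prop: Connection between temporal sums and MC} into a statement about the proportion of levels $l$ of a tower $Z_J^{\left(n_j\right)}$ at which the intermediate Birkhoff sum from the base is large --- is sound and is exactly how the paper's proof begins. The gap is in your step ``from a subsequence of heights to all times'', and it is fatal as written, for two reasons. First, the subsequence $\left\{ n_j\right\}$ is only assumed to be strictly increasing; the comparability estimate \eqref{eq: Inequality for heights of towers} controls the ratio $h_J^{\left(n\right)}/h_K^{\left(k\right)}$ only when $\left|n-k\right|$ is bounded, so if the gaps $n_{j+1}-n_j$ are unbounded there are times $m$ that do \emph{not} lie in a bounded multiplicative window of any $h_J^{\left(n_j\right)}$. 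Second, and more seriously, even when $m$ is comparable to some $h_J^{\left(n_j\right)}$, the correction $\varphi_m\left(y\right)-\varphi_{h_J^{\left(n_j\right)}}\left(y\right)$ is a Birkhoff sum of length of order $h_J^{\left(n_j\right)}$, and its decomposition into special Birkhoff sums requires up to $a_{\max}+1$ terms at \emph{each} of the renormalization levels below $n_j$, i.e. $O\left(n_j\right)$ terms, not $O\left(1\right)$. Indeed, if every such correction really were a sum of boundedly many special Birkhoff sums, then by \eqref{eq: Denjoy Koksma} the whole sequence $\varphi_m$ would be uniformly bounded and the variance could never grow, contradicting what the paper establishes afterwards; so no argument of this shape can close the gap.

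The paper avoids this by going in the opposite direction: given $n$, it chooses $j$ so large that $n<\epsilon\, h_J^{\left(n_j\right)}$ for all $J$ (always possible since the heights tend to infinity; sparseness of $\left\{ n_j\right\}$ is irrelevant). For $x$ on level $l$ of $Z_J^{\left(n_j\right)}$ with $0\leq l<h_J^{\left(n_j\right)}-n$ one has the \emph{exact} identity $\varphi_n\left(x\right)=\varphi_{n+l}\left(x_0\right)-\varphi_l\left(x_0\right)$, where $x_0\in I_J^{\left(n_j\right)}$ is the base point below $x$; thus $\varphi_n\left(x\right)$ is a difference of two intermediate Birkhoff sums inside the same, much taller, tower. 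Both are controlled in proportion of $l$ by the Chebyshev bound together with Proposition \ref{prop: Connection between temporal sums and MC}, since Lebesgue measure on the tower corresponds to the uniform measure on its levels, i.e. to $\mu_{n_j}^J$; the top $n$ levels, which the identity does not cover, have total measure at most $n/h_J^{\left(n_j\right)}<\epsilon$. You should replace your second step by this ``difference of two intermediate sums in a much taller tower'' argument.
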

Thus, combining Theorem \ref{thm: coboundaries tight} and Lemma \ref{lem:variance tight}
we have the following.
\begin{cor}
\label{cor: var goest to infty}If $\sigma_{n}$ does not tend to
infinity as $n\rightarrow\infty$, then $\varphi$ must be a coboundary. 
\end{cor}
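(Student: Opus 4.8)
The plan is to obtain the Corollary as an immediate consequence of Lemma \ref{lem:variance tight} and Theorem \ref{thm: coboundaries tight}, the only work being a subsequence extraction. First I would note that, since $\sigma_n\ge 0$ for every $n$, the hypothesis that $\sigma_n$ does not tend to infinity is equivalent to $\liminf_{n\to\infty}\sigma_n<\infty$. Hence there are a constant $C$ and a strictly increasing sequence of positive integers $\{n_j\}_{j=1}^{\infty}$ with $\sigma_{n_j}\le C$ for all $j$, that is, $\sup\{\sigma_{n_j}:\,j=1,2,\dots\}<\infty$.

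Next I would apply Lemma \ref{lem:variance tight} to precisely this sequence $\{n_j\}$: its hypothesis is exactly the boundedness just established, and its conclusion is that the sequence of Birkhoff sums $\varphi_n=\sum_{k=0}^{n-1}\varphi\circ T_{\alpha_0}^{k}$, regarded as random variables on $\bigl([-1,\alpha_0),\lambda\bigr)$, is tight. Finally, Theorem \ref{thm: coboundaries tight} (the characterization of real-valued coboundaries by tightness of their Birkhoff sums) turns tightness of $\{\varphi_n\}$ into the statement that $\varphi$ is a coboundary over $T_{\alpha_0}$, which is the assertion of the Corollary. One may add that, via the conjugacy between $T_{\alpha_0}$ and $R_\alpha$ together with Remark \ref{rem: Infiniteness of Ostrowsky expansion}, this is in turn equivalent to $f_\beta$ being a coboundary over $R_\alpha$.

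I do not expect any genuine obstacle at the level of the Corollary: all the substance sits in Lemma \ref{lem:variance tight}, whose proof should go through Proposition \ref{prop: Connection between temporal sums and MC} to transfer the bounded-variance information on the Markov-chain partial sums $\sum_{k\le n}\xi_k(X_k)$ to the intermediate Birkhoff sums along the renormalization towers (which exhaust the orbit, since $h_J^{(n_j)}\to\infty$), and then use Chebyshev's inequality to conclude tightness. The only mild care needed here is the elementary observation that failure of $\sigma_n\to\infty$ forces boundedness of $\sigma_n$ along some strictly increasing subsequence.
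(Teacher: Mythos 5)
Your proposal is correct and matches the paper exactly: the Corollary is obtained by extracting a subsequence along which $\sigma_{n_j}$ is bounded, applying Lemma \ref{lem:variance tight} to get tightness of $\{\varphi_n\}$, and invoking Theorem \ref{thm: coboundaries tight} to conclude that $\varphi$ is a coboundary. The paper treats this as an immediate combination of the two results, just as you do, and your sketch of how the Lemma itself is proved also agrees with the paper's argument.
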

\begin{proof}[Proof of Lemma \ref{lem:variance tight}.]
 Fix $\epsilon>0$. By Markov's inequality the assumption that $\sup\left\{ \sigma_{n_{j}}:\ j=1,2,...\right\} <\infty$,
implies that there exists a constant $A$ such that for every $j\in\bbN$,
\begin{equation}
\mu_{n_{j}}\left(\left|\sum_{k=1}^{n_{j}}\xi_{k}\left(X_{k}\right)-e_{n_{j}}\right|>A\right)<\epsilon.\label{eq: Markov inequality-1}
\end{equation}
 Let $n\in N$ and fix $j$ such that $n<\epsilon h_{J}^{\left(n_{j}\right)}$
for any $J\in\left\{ L,M,S\right\} $ (this is possible since the
heights of the towers $h_{J}^{\left(n\right)}$ tend to infinity with
$n$). Let $x$ be any point on level $l$ of the tower $Z_{J}^{\left(n_{j}\right)}$
and consider the Birkhoff sums $\varphi_{n}(x)$. Then there exists
a point $x_{0}=x_{0}(x)$ in the base of the tower $I_{J}^{\left(n_{j}\right)}$
such that $\varphi_{n}\left(x\right)=\varphi_{n+l}\left(x_{0}\right)-\varphi_{l}\left(x_{0}\right)$.
Since the values of $S_{l}(x_{0})$ for $x_{0}\in I_{J}^{\left(n_{j}\right)}$
and $0\leq l\leq h_{J}^{\left(n_{j}\right)}$ do not depend on $x_{0},$
we can choose any point $x_{J}$ $\in I_{J}^{\left(n_{j}\right)}$
and by triangle inequality we have that $\left|\varphi_{n}\left(x\right)\right|>2A$
implies that $\left|\varphi_{n+l}\left(x_{J}\right)-e_{n_{j}}\right|>A$
or $\left|\varphi_{l}\left(x_{J}\right)-e_{n_{j}}\right|>A$ for any
point $x$ on level $l$ of the tower $Z_{J}^{\left(n_{j}\right)}$
with $0\leq l<h_{J}^{\left(n_{j}\right)}-n$. Thus, 
\begin{align*}
\lambda\biggl(\left|\varphi_{n}(x)\right|> & 2A\biggl|\ x\in Z_{J}^{\left(n_{j}\right)}\biggr)\\
 & \leq\lambda\left(I_{J}^{\left(n_{j}\right)}\right)\left(\#\left\{ 0\leq l<h_{J}^{\left(n_{j}\right)}-n:\ \left|\varphi_{n+l}\left(x_{J}\right)-e_{n_{j}}\right|>A\text{ or \ensuremath{\left|\varphi_{l}\left(x_{J}\right)-e_{n_{j}}\right|>A}}\text{ }\right\} +n\right)\\
 & \leq\frac{1}{h_{J}^{\left(n_{j}\right)}}\#\left\{ 0\leq l<h_{J}^{\left(n_{j}\right)}-n:\ \left|\varphi_{n+l}\left(x_{J}\right)-e_{n_{j}}\right|>A\text{ or \ensuremath{\left|\varphi_{l}\left(x_{J}\right)-e_{n_{j}}\right|>A}}\right\} +\epsilon.
\end{align*}
where the last inequality follows by using that $\lambda\left(I_{J}^{\left(n_{j}\right)}\right)h_{J}^{\left(n_{j}\right)}=\lambda\left(Z_{J}^{\left(n_{j}\right)}\right)\leq1$
and recalling that by choice of $n_{j}$ we have that $n/h_{J}^{\left(n_{j}\right)}<\epsilon$.
Furthermore, by a change of indexes,

\begin{align*}
\frac{1}{h_{J}^{\left(n_{j}\right)}}\#\Biggl\{0\leq l<h_{J}^{\left(n_{j}\right)}-n:\ \left|\varphi_{l}\left(x_{J}\right)-e_{n_{j}}\right|>A & \,\mathrm{or}\,\left|\varphi_{n+l}\left(x_{J}\right)-e_{n_{j}}\right|>A\Biggr\}\\
 & \leq\frac{2}{h_{J}^{\left(n_{j}\right)}}\#\left\{ 0\leq l<h_{J}^{\left(n_{j}\right)}:\ \left|\varphi_{l}\left(x_{J}\right)-e_{n_{j}}\right|>A\text{ }\right\} \\
 & =2\mu_{n_{j}}^{J}\left(\left|\sum_{k=1}^{n_{j}}\xi_{k}\left(X_{k}\right)-e_{n_{j}}\right|>A\right),
\end{align*}
where the last equality follows from Proposition \ref{prop: Connection between temporal sums and MC}.
Therefore, from the relation between the measures $\mu_{n}^{J}$ and
$\mu_{n}$ (see Definition \ref{def: Markov transitions}) it follows
that 
\begin{align*}
\lambda\left(\left|\varphi_{n}\right|>2A\right) & =\sum_{J\in\left\{ L,M,S\right\} }\lambda\left(\left|\varphi_{n}\right|>2A\left|Z_{J}^{\left(n_{j}\right)}\right.\right)\cdot\lambda\left(Z_{J}^{\left(n_{j}\right)}\right)\\
 & \leq3\epsilon+\sum_{J\in\left\{ L,M,S\right\} }2\mu_{n_{j}}^{J}\left(\left|\sum_{k=1}^{n_{j}}\xi_{k}\left(X_{k}^ {}\right)-e_{n_{j}}\right|>A\right)\lambda\left(Z_{J}^{\left(n_{j}\right)}\right)\\
 & =3\epsilon+2\mu_{n_{j}}\left(\left|\sum_{k=1}^{n_{j}}\xi_{k}\left(X_{k}\right)-e_{n_{j}}\right|>A\right).
\end{align*}
It follows from (\ref{eq: Markov inequality-1}) that $\lambda\left(\left\{ x:\left|\varphi_{n}\left(x\right)\right|>2A\right\} \right)<5\epsilon$.
Since $\epsilon$ was chosen arbitrarily, this shows that $\varphi_{n}$
is tight. 
\end{proof}

\subsection{Proof of Theorem \ref{thm: Main thm}.\label{subsec:Proof-of-Theorem}}

We begin this section with a few observations that summarize the results
obtained in the preceding sections in the form that is used in order
to prove Theorem \ref{thm: precise version of main} below from which
the main theorem follows. 

\smallskip{}
Let $0<\alpha<\frac{1}{2}$ be badly approximable and $\beta\in\left(0,1\right)$
be badly approximable with respect to $\alpha$. By Lemma \ref{lem: badly approximable implies Ostrowski bounded}
the pair $\left(\alpha_{0},\beta_{0}\right)$ related to $\left(\alpha,\beta\right)$
via equations (\ref{eq:Correcpondence between alpha and alpha_zero})
and (\ref{eq: Correcspondence between beta and beta_zero}), satisfies
$\left(\alpha_{0},\beta_{0}\right)\in\tilde{X}$. To each such pair,
in Section \ref{subsec: Markov chain towers} we associated a Markov
compactum given by a sequence of transition matrices $\left\{ A_{n}\right\} $
(which are incidence matrices for the substitutions $\left\{ \tau_{n}\right\} $
which describe the Rokhlin tower structure) and Markov measures $\left\{ \mu_{n}\right\} $
with transition matrices $\left\{ p^{\left(n\right)}\right\} $ (defined
in \ref{eq:def mu} and Definition \ref{def: Markov transitions}
respectively). Let$\left\{ X_{k}\right\} $ be the coordinate functions
on the Markov compactum (see \ref{eq: Constructed Random variables})
and $\left\{ \xi_{k}\right\} $ be the functions also defined therein
(see Definition \ref{def: xi functions}), which can be used to study
the behavior of Birkhoff sums of the function $\varphi$defined by
(\ref{eq: Cocycle function}) over $T_{\alpha_{0}}$ in virtue of
as proved in Proposition \ref{prop: Connection between temporal sums and MC}.
We set 
\[
e_{n}:=E_{\mu_{n}}\left(\sum_{k=1}^{n}\xi_{k}\left(X_{k}\right)\right),\qquad\sigma_{n}:=\sqrt{Var_{\mu_{n}}\left(\sum_{k=1}^{n}\xi_{k}\left(X_{k}\right)\right)},
\]
where the subscript $\mu_{n}$ in $E_{\mu_{n}}$ and $Var_{\mu_{n}}$
mean that all integrals are taken with respect to the measure $\mu_{n}$. 

Since the function $\varphi$ defined by (\ref{eq: Cocycle function})
is not a coboundary (see Remark \ref{rem: Infiniteness of Ostrowsky expansion}),
Corollary \ref{cor: var goest to infty} implies that $\sigma_{n}\rightarrow\infty$.
By definition of $\xi_{k}$, combining the assumption that $\alpha$
is badly approximable with the inequality (\ref{eq: Denjoy Koksma}),
we obtain that 
\[
\sup\left\{ \xi^{k}\left(J,j\right):\quad k\in\bbN,\ \left(J,j\right)\in\mathcal{S}_{k}\right\} <\infty.
\]

Finally, for any $n\in\bbN$, set $\xi_{k}^{\left(n\right)}:=\xi_{k}$,
$X_{k}^{\left(n\right)}:=X_{n}$, for $k=n,...,1$. Let us then define
a Markov array $\left\{ X_{k}^{\left(n\right)}:\ n\in\bbN,\ k=n,...,1\right\} $,
where $Prob\left(\left(X_{1}^{\left(n\right)},...,X_{n}^{\left(n\right)}\right)\in A\right)=\mu_{n}\left(A\right)$
for every set $A$ in the Borel $\sigma$-algebra of the space $\Sigma_{n}$.
The observations above together with Corollary \ref{cor: Positivity of contraction coefficient for bbd type}
show that all assumptions of Corollary \ref{cor:Under-the-conditions}
hold for this array. Thus 
\begin{equation}
\lim_{n\rightarrow\infty}\mu_{n}\left\{ \frac{\sum_{k=1}^{N}\xi_{k}\left(X_{k}\right)-e_{N}}{\sigma_{N}}\in\left[a,b\right]\right\} =\frac{1}{\sqrt{2\pi}}\int_{a}^{b}e^{-\frac{x^{2}}{2}}dx.\label{eq: CLT}
\end{equation}
 Moreover, by Proposition \ref{prop: Same order of variances and expectations}
(and the fact the $\sigma_{n}\rightarrow\infty$), (\ref{eq: CLT})
holds with $\mu_{n}$ replaced by $\mu_{n}^{J}$, for any $J\in\left\{ L,M,S\right\} $
(where $\mu_{n}^{J}$ are the conditional measures defined by (\ref{eq:def mu J})). 

\smallskip{}
We can now deduce the temporal CLT for Birkhoff sums. Fix $x\in\left[-1,\alpha_{0}\right)$.
Let us first define the centralizing and normalizing constants for
the Birkhoff sums $\varphi_{n}(x).$ For $n\in\bbN$, let $N=N\left(n\right):=\min\left\{ k:\ n\leq h_{S}^{\left(k\right)}\right\} $.
Let $Z_{J}^{\left(N\right)}$ be the tower at stage $N$ of the renormalization
which contains the point $x$ and let $l_{n}$ be the level of the
tower $Z_{J}^{\left(N\right)}$ which contains $x$, i.e. $l_{n}$
satisfies $x\in T^{l_{n}}\left(I_{J}^{\left(N\right)}\right)$ . Set
$c_{n}\left(x\right):=\varphi_{l_{n}}\left(x'\right)$ where $x'$
is any point in $I_{J}^{\left(N\right)}$, i.e. $c_{n}\left(x\right)$
is the Birkhoff sum over the tower $Z_{J}^{\left(N\right)}$ from
the bottom of the tower and up to the level that contains $x$.

We will prove the following temporal DLT, from which Theorem \ref{thm: Main thm}
follows immediately recalling the correspondence between $R_{\alpha}$
and $T_{\alpha_{0}}$ and the functions $f_{\beta}$ and $\varphi$
(refer to the beginning of Section (\ref{subsec:Continued-fraction-renormalization})).
\begin{thm}
\label{thm: precise version of main}For any $a<b$, 
\[
\lim_{n\rightarrow\infty}\frac{1}{n}\#\left\{ 0\leq k\leq n-1:\ \frac{\varphi_{k}\left(x\right)-c_{n}\left(x\right)-e_{N\left(n\right)}}{\sigma_{N\left(n\right)}}\in\left[a,b\right]\right\} =\frac{1}{\sqrt{2\pi}}\int_{a}^{b}e^{-\frac{x^{2}}{2}}dx.
\]
\end{thm}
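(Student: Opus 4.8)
The plan is to deduce Theorem \ref{thm: precise version of main} from the CLT for the Markov chain (equation (\ref{eq: CLT}), which holds also for the conditional measures $\mu_n^J$) by carefully relating the temporal count over the orbit segment $\varphi_0(x),\dots,\varphi_{n-1}(x)$ to the Markov-chain statistics at renormalization level $N=N(n)$. First I would unwind the definition of $N(n)=\min\{k:\ n\le h_S^{(k)}\}$: by (\ref{eq: Heights of towers growth}) we have $N(n)\sim c\log n$, and by minimality $h_S^{(N-1)}<n\le h_S^{(N)}$, so by (\ref{eq: Inequality for heights of towers}) all three heights $h_J^{(N)}$ are comparable to $n$ up to a multiplicative constant independent of $n$; in particular $h_J^{(N)}/n$ is bounded above and below. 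This is the quantitative input that lets one pass between ``uniformly random $k\in\{0,\dots,n-1\}$'' and ``uniformly random level of a tower $Z_J^{(N)}$''.

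Next I would set up the decomposition. The point $x$ lies on level $l_n$ of a tower $Z_J^{(N)}$, and for $0\le k\le n-1$ one writes, as in the proof of Lemma \ref{lem:variance tight}, $\varphi_k(x)=\varphi_{l_n+k}(x')-\varphi_{l_n}(x')=\varphi_{l_n+k}(x')-c_n(x)$ for $x'$ the base point of $I_J^{(N)}$ (using that intermediate Birkhoff sums over a tower do not depend on the base point). Hence
\[
\frac{1}{n}\#\Bigl\{0\le k\le n-1:\ \tfrac{\varphi_k(x)-c_n(x)-e_N}{\sigma_N}\in[a,b]\Bigr\}
=\frac{1}{n}\#\Bigl\{l_n\le m\le l_n+n-1:\ \tfrac{\varphi_m(x')-e_N}{\sigma_N}\in[a,b]\Bigr\}.
\]
The range $[l_n,l_n+n-1]$ is an interval of $n$ consecutive intermediate-tower heights inside $Z_J^{(N)}$, which has total height $h_J^{(N)}\asymp n$; the discrepancy between counting over this shifted window and counting over the full range $\{0,\dots,h_J^{(N)}-1\}$ is at most $2|h_J^{(N)}-n|+2l_n$, but since $h_J^{(N)}$ and $n$ are only comparable (not asymptotically equal) this error is a definite fraction of $n$, not $o(n)$. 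To fix this I would instead pass to a deeper level: choose $N'=N'(n)$ with $N'\to\infty$ but $h_J^{(N')}/n\to 0$ (e.g. $N'=N(n)-R$ for a large fixed $R$, or an appropriate sublinear choice), decompose the interval $[l_n,l_n+n-1]$ into full towers $Z_{J}^{(N')}$ plus boundary towers whose total measure is $O(h^{(N')}/n)=o(1)$, and on each full tower apply Proposition \ref{prop: Connection between temporal sums and MC} to rewrite the count as $\mu_{N'}^{J}\bigl(\sum_{k=1}^{N'}\xi_k(X_k)\in A\bigr)$ for the appropriate translated interval $A$. Averaging over which tower $Z_J^{(N')}$ the index falls into, with weights proportional to $h_J^{(N')}/n$, the count becomes (up to $o(1)$) a convex combination of the $\mu_{N'}^J$-probabilities, hence asymptotically equals the $\mu_{N'}$-probability by Proposition \ref{prop: Same order of variances and expectations}.

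The final step is replacing the normalization $(e_N,\sigma_N)$ at level $N$ by $(e_{N'},\sigma_{N'})$ at level $N'$: since $N-N'$ is bounded (or grows slowly), Proposition \ref{prop: Same order of variances and expectations} and the additivity of $\xi_k$ give $|e_N-e_{N'}|\le C(N-N')$ and $|\sigma_N^2-\sigma_{N'}^2|\le C(N-N')$, and since $\sigma_{N'}\to\infty$ these differences are $o(\sigma_{N'})$, so by Slutsky the rescaled count converges to the same Gaussian. Applying (\ref{eq: CLT}) at level $N'$ then yields
\[
\frac{1}{n}\#\Bigl\{0\le k\le n-1:\ \tfrac{\varphi_k(x)-c_n(x)-e_{N(n)}}{\sigma_{N(n)}}\in[a,b]\Bigr\}\longrightarrow\frac{1}{\sqrt{2\pi}}\int_a^b e^{-x^2/2}\,dx.
\]
The main obstacle is precisely the bookkeeping in the middle step: because $N(n)$ is defined via a threshold, $h_J^{(N)}$ is only comparable to $n$ rather than asymptotic to it, so one cannot directly match the temporal window $\{0,\dots,n-1\}$ to a single renormalization tower; one must tile the window by towers from a slightly coarser (sublinearly deeper) level and control both the boundary error $o(n)$ and the change of normalizing constants, using the stability estimates of Proposition \ref{prop: Same order of variances and expectations} together with $\sigma_n\to\infty$ from Corollary \ref{cor: var goest to infty}.
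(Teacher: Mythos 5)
Your plan follows essentially the same route as the paper's proof: the paper takes $K=K(n)=\max\{k:\ h_L^{(k)}\le\epsilon n\}$ (your $N'=N-R(\epsilon)$), tiles the orbit segment $\{x,\dots,T^{n-1}x\}$ by complete towers of level $K$ with boundary contribution at most $2\epsilon$, converts the count over each complete block into a $\mu_K^{J}$-probability via Proposition \ref{prop: Connection between temporal sums and MC}, and uses $|N-K|\le C(\epsilon)$ together with $\sigma_n\to\infty$ to absorb the change of normalizing constants, exactly as you propose. The one point you leave implicit --- and which is precisely why $c_n(x)$ is defined the way it is --- is that the per-block translation of the target interval is \emph{uniformly bounded}, because $\varphi_{k_i}(x)+c_n(x)$ is a sum of at most $h_L^{(N)}/h_S^{(K)}$ complete special Birkhoff sums of level $K$ and is therefore controlled by the Denjoy--Koksma bound (\ref{eq: Denjoy Koksma}); this is what lets the translated probabilities converge to the same Gaussian integral.
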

The above formulation, in particular, shows that the centralizing
constants depend on the point $x$ and have a very clear dynamical
meaning. The proof of this Theorem, which will take the rest of the
section, is based on a quite standard decomposition of a Birkhoff
sums into special Birkhoff sums. For each intermediate Birkhoff sum
along a tower, we then exploit the connection with the Markov chain
given by Proposition \ref{prop: Connection between temporal sums and MC}
and the convergence given by (\ref{eq: CLT}).
\begin{proof}
Fix $0<\epsilon<1$, $a,b\in\bbR$, $a<b$ and let $n\in\bbN$. By
definition of $N=N\left(n\right)$, the points $\left\{ x,...,T^{n-1}x\right\} $
are contained in at most two towers obtained at the $N^{th}$ level
of the renormalization. Let $K$ be defined by $K:=K\left(n\right)=\max\left\{ k:\ h_{L}^{\left(k\right)}\leq\epsilon n\right\} $.
Evidently, $K\leq N$, and by (\ref{eq: Heights of towers growth})
there exists $C>0$ which depends on $\epsilon$ but not on $n$,
such that $N-K\leq C$. 

Thus, since towers of level $N$ are decomposed into towers of level
$K,$ we can decompose the orbit$\left\{ x,...,T^{n-1}x\right\} $
into blocks which are each contained in a tower of level $K$. More
precisely, as shown in Figure (\ref{fig:Decomposition}), there exist
$0=k_{0}\leq k_{1}<...,<k_{t}\leq n$ and towers $\left(Z_{J_{k_{i}}}^{\left(K\right)}\right)_{i=0}^{t}$
appearing at the $K^{th}$ stage of renormalization, such that $\left\{ T^{k_{i}}x,...,T^{k_{i+1}-1}x\right\} \subseteq Z_{J_{k_{i}}}^{\left(K\right)}$
for $i=0,...,t$. Moreover, for $i=1,...,t-1$, the set $\left\{ T^{k_{i}}x,...,T^{k_{i+1}-1}x\right\} $
contains exactly $h_{J_{k_{i}}}^{\left(K\right)}$ points, i.e. $k_{i+1}-k_{i}=h_{J_{k_{i}}}^{\left(K\right)}$
and the points $T^{k_{i}+j}$, $j=0,...,k_{i+1}-1$ belong to the
$j+1$ level of the tower $Z_{J_{k_{i}}}^{\left(K\right)}$. Since
the orbit segment is contained in at most two towers of level $N$
and each tower of level $N$ contains at most $h_{L}^{\left(N\right)}/h_{S}^{\left(K\right)}$
towers of level $K$, we have that $t=t\left(n\right)\leq2h_{L}^{\left(N\right)}/h_{S}^{\left(K\right)}$
and hence is uniformly bounded in $n$. 

It follows from this decomposition that, for any interval $I\subset\bbR$,

\begin{align}
\frac{1}{n}\#\left\{ 0\leq k\leq n-1:\ \varphi_{k}\left(x\right)\in I\right\}  & =\frac{1}{n}\sum_{i=0}^{t-1}\#\left\{ k_{i}\leq k<k_{i+1}:\ \varphi_{k}\left(x\right)\in I\right\} \label{eq: Upper ineq}\\
 & \leq\frac{1}{n}\sum_{i=1}^{t-1}\#\left\{ k_{i}\leq k<k_{i+1}:\ \varphi_{k}\left(x\right)\in I\right\} +2\epsilon,\nonumber 
\end{align}
where the last inequality follows from the fact that $h_{J_{k_{0}}}^{\left(K\right)}$
and $h_{J_{k_{t}}}^{\left(K\right)}$ are both not greater than $n\epsilon$.
Evidently, we also have the opposite inequality 
\begin{equation}
\frac{1}{n}\#\left\{ 0\leq k\leq n-1:\ \varphi_{k}\left(x\right)\in I\right\} \geq\frac{1}{n}\sum_{i=1}^{t-1}\#\left\{ k_{i}\leq k<k_{i+1}:\ \varphi_{k}\left(x\right)\in I\right\} .\label{eq: Lower Ineq}
\end{equation}
For $i=1,...,t-1$, and $k_{i}<k\leq k_{i+1}$, write 
\begin{align*}
\varphi_{k}\left(x\right) & =\varphi_{k}\left(x\right)-\varphi_{k_{i}}\left(x\right)+\varphi_{k_{i}}\left(x\right)=\varphi_{k-k_{i}}\left(x'\right)+\varphi_{k_{i}}\left(x\right)
\end{align*}
 where $x'$ is any point in $I_{J_{k_{i}}}^{\left(K\right)}$. 
\begin{figure}[!tbh]
\includegraphics[width=0.8\textwidth]{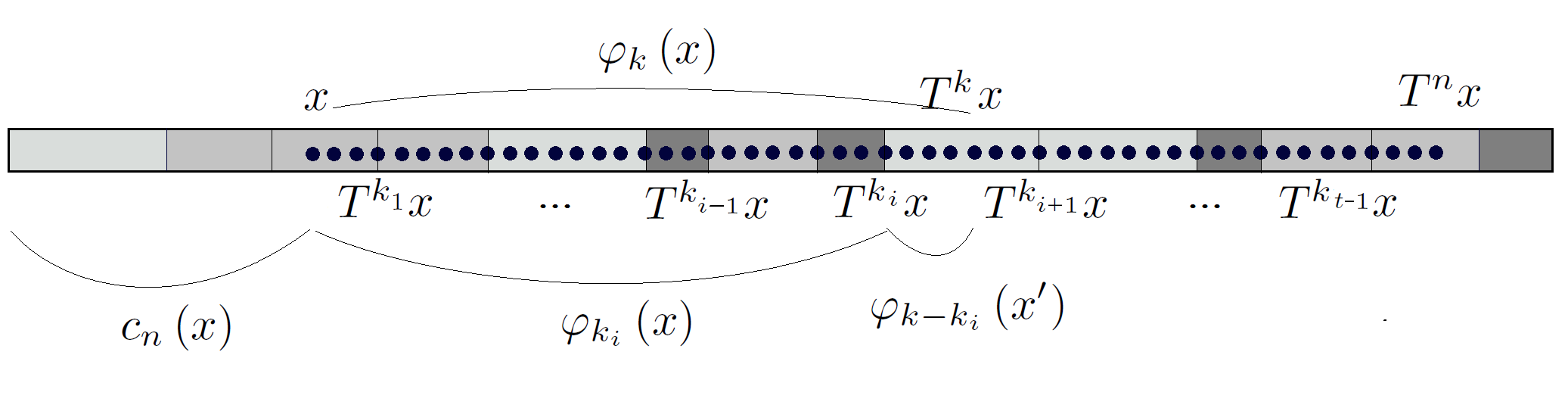}

\caption{\label{fig:Decomposition}Decomposition of the orbit segment $\left\{ x,...,T^{n-1}x\right\} $
into Birkhoff sums along towers of level $N-K$. }

\end{figure}

By definition of $c_{n}\left(x\right)$, $\varphi_{k_{i}}\left(x\right)+c_{n}\left(x\right)=\varphi_{k_{i}}\left(x_{0}\right)$
where $x_{0}$ belongs to the base $I^{\left(N\right)}$ (see Figure
\ref{fig:Decomposition}), thus $\varphi_{k_{i}}\left(x\right)+c_{n}\left(x\right)$
is a sum of special Birkhoff sums over subtowers of $Z_{J}^{\left(K\right)}$,
$J\in\left\{ L,M,S\right\} $. Hence, 

\[
\left|\varphi_{k_{i}}\left(x\right)+c_{n}\left(x\right)\right|\leq\text{\ensuremath{\left(h_{L}^{\left(N\right)}/h_{S}^{\left(K\right)}\right)}}\sup_{J}\left|\varphi_{J}^{(K)}\right|,
\]
by (\ref{eq: Denjoy Koksma}), there exists a constant $\tilde{C}:=\tilde{C}\left(\epsilon\right)$
which does not depend on $n$, such that $\left|\varphi_{k_{i}}\left(x\right)+c_{n}\left(x\right)\right|\leq\tilde{C}$.
It follows from Proposition \ref{prop: Connection between temporal sums and MC}
that 
\begin{align*}
\frac{\#\left\{ k_{i}\leq k<k_{i+1}:\ \frac{\varphi_{k}\left(x\right)-e_{N}-c_{n}\left(x\right)}{\sigma_{N}}\in\left[a,b\right]\right\} }{h_{J_{k_{i}}}^{\left(K\right)}} & =\frac{\#\left\{ 0\leq k<h_{J_{k_{i}}}^{\left(K\right)}:\ \frac{\varphi_{k}\left(x'\right)-\varphi_{k_{i}}\left(x\right)-e_{N}-c_{n}\left(x\right)}{\sigma_{N}}\in\left[a,b\right]\right\} }{h_{J_{k_{i}}}^{\left(K\right)}}\\
 & =\mu_{K}^{J_{k_{i}}}\left(\frac{\sum_{k=1}^{K}\xi_{k}\left(X_{k}\right)-\varphi_{k_{i}}\left(x\right)-e_{N}-c_{n}\left(x\right)}{\sigma_{N}}\in\left[a,b\right]\right).
\end{align*}
Since $\left|N-K\right|=\left|N\left(n\right)-K\left(n\right)\right|<C$,
we have that $\sup_{n}\left\{ \left|e_{N}-e_{K}\right|\right\} <\infty$
and $\frac{\sigma_{N}}{\sigma_{K}}\underset{n\rightarrow\infty}{\longrightarrow}1$.
Moreover, since $\left|\varphi_{k_{i}}\left(x\right)+c_{n}\left(x\right)\right|\leq\tilde{C}$,
it follows from (\ref{eq: CLT}), that for any $J\in\left\{ L,M,S\right\} $
\[
\lim_{n\rightarrow\infty}\frac{1}{h_{J}^{\left(K\right)}}\#\left\{ k_{i}\leq k<k_{i+1}:\ \frac{\varphi_{k}\left(x\right)-e_{N}-c_{n}\left(x\right)}{\sigma_{N}}\in\left[a,b\right]\right\} =\frac{1}{\sqrt{2\pi}}\int_{a}^{b}e^{-\frac{x^{2}}{2}}dx.
\]
Let $n_{0}$ be such that for all $n>n_{0}$ and any $J\in\left\{ L,M,S\right\} $,
\begin{equation}
\left|\frac{1}{h_{J}^{\left(K\right)}}\#\left\{ 0\leq k<h_{J}^{\left(k\right)}-1:\ \frac{\varphi_{k}\left(x\right)-e_{N}-c_{n}\left(x\right)}{\sigma_{N}}\in\left[a,b\right]\right\} -\frac{1}{\sqrt{2\pi}}\int_{a}^{b}e^{-\frac{x^{2}}{2}}dx\right|<\epsilon.\label{eq: Normal approx}
\end{equation}
Then if $n>n_{0}$, by (\ref{eq: Upper ineq}) and (\ref{eq: Normal approx}),
recalling that $\sum_{i=1}^{t-1}h_{J_{k_{i}}}\leq n$,
\begin{align*}
\Bigl|\frac{1}{n}\#\Bigl\{0\leq k\leq n-1:\ \frac{\varphi_{k}\left(x\right)-e_{N}-c_{n}\left(x\right)}{\sigma_{N}} & \in\left[a,b\right]\Bigr|\Bigr\}\\
 & \leq\frac{1}{n}\sum_{i=1}^{t-1}\#\left\{ k_{i}\leq k<k_{i+1}:\ \frac{\varphi_{k}\left(x\right)-e_{N}-c_{n}\left(x\right)}{\sigma_{N}}\in\left[a,b\right]\right\} +2\epsilon\\
 & \leq\text{\ensuremath{\frac{1}{n}\sum_{i=1}^{t-1}\text{\ensuremath{h_{J_{k_{i}}}\text{\ensuremath{\left(\text{\ensuremath{\frac{1}{\sqrt{2\pi}}\int_{a}^{b}e^{-\frac{x^{2}}{2}}}dx\ensuremath{\cdot\frac{1}{n}\sum_{i=1}^{t-1}h_{J_{k_{i}}}}+\ensuremath{\epsilon}}\right)}}}}}+2\ensuremath{\epsilon}}\\
 & \leq\frac{1}{\sqrt{2\pi}}\int_{a}^{b}e^{-\frac{x^{2}}{2}}dx+3\epsilon.
\end{align*}
Similarly, by (\ref{eq: Lower Ineq}), if $n>n_{0}$, using this time
that $\sum_{i=1}^{t-1}h_{J_{k_{i}}}\geq n(1-2\epsilon)$, we obtain
the lower bound 
\begin{align*}
\Bigl|\frac{1}{n}\#\Bigl\{0\leq k\leq n-1:\ \frac{\varphi_{k}\left(x\right)-e_{N}-c_{n}\left(x\right)}{\sigma_{N}} & \in\left[a,b\right]\Bigr|\Bigr\}\\
 & \geq\frac{1}{n}\sum_{i=1}^{t-1}\#\left\{ k_{i}\leq k<k_{i+1}:\ \frac{\varphi_{k}\left(x\right)-e_{N}-c_{n}\left(x\right)}{\sigma_{N}}\in\left[a,b\right]\right\} \\
 & \geq\frac{1}{n}\sum_{i=1}^{t-1}h_{J_{k_{i}}}\left(\frac{1}{\sqrt{2\pi}}\int_{a}^{b}e^{-\frac{x^{2}}{2}}dx-\epsilon\right)\\
 & \geq\left(1-2\epsilon\right)\left(\frac{1}{\sqrt{2\pi}}\int_{a}^{b}e^{-\frac{x^{2}}{2}}dx-\epsilon\right).
\end{align*}
This completes the proof. 
\end{proof}

\subsection*{\textcolor{black}{Acknowledgments.}}

\textcolor{black}{We would like to thank Jon Aaronson, Dima Dolgopyat,
Jens Marklof and Omri Sarig for useful discussions and for their interest
in our work. Both authors are supported by the ERC Starting Grant
ChaParDyn. C. U. is also supported by the Leverhulm Trust through
a Leverhulme Prize. The research leading to these results has received
funding from the European Research Council under the European Union
Seventh Framework Programme (FP/2007-2013) / ERC Grant Agreement n.
335989.}

\bibliographystyle{plain}
\bibliography{Bibiliography}

\end{document}